%
%
%
%
\documentclass{amsart}

\usepackage{amsmath}
\usepackage{amssymb}
\usepackage{amsfonts}
\usepackage{amsthm}
\usepackage{latexsym}
\usepackage{esint}
\usepackage{mathtools}
\usepackage{mathrsfs}
\usepackage{graphicx}
\usepackage{bbm}
\usepackage{color}

\newtheorem{theorem}{Theorem}[section]
\newtheorem{lemma}[theorem]{Lemma}
\newtheorem{proposition}[theorem]{Proposition}
\newtheorem{corollary}[theorem]{Corollary}

\theoremstyle{definition}
\newtheorem{definition}[theorem]{Definition}

\theoremstyle{remark}
\newtheorem{remark}[theorem]{Remark}

\numberwithin{equation}{section}


\begin{document}

\title[Existence and low-Mach limit for stochastic compressible flows]{EXISTENCE OF MARTINGALE SOLUTIONS AND THE
INCOMPRESSIBLE LIMIT FOR STOCHASTIC COMPRESSIBLE
FLOWS ON THE WHOLE SPACE}

\author{Prince Romeo Mensah}
\address{Department of Mathematics, Heriot-Watt University, Edinburgh, EH14 4AS, United Kingdom}
\email{pm27@hw.ac.uk}
\thanks{The author would like to acknowledge the financial support of the Department of Mathematics, Heriot--Watt University, through the James--Watt scholarship. He will also like to thank D. Breit for recommending this work and for his many useful discussions.}


\subjclass[2010]{35R60 \and 35Q35 \and  76N10 \and 76M45}

\date{}


\keywords{Isentropic flows, Stochastic compressible fluid, Navier--Stokes, Mach number, Martingale solution}

\begin{abstract}
We give an existence and asymptotic result for the so-called finite energy weak martingale solution of the compressible isentropic Navier--Stokes system driven by some random force in the whole spatial region. In particular, given a general nonlinear multiplicative noise, we establish the convergence to the incompressible system as the Mach number, representing the ratio between the average flow velocity and the speed of sound, approaches zero. 
\end{abstract}

\maketitle

\section{Introduction}
\label{ch:introduction}

In continuum mechanics, the motion of an \textit{isentropic} compressible fluid is described by the density $\varrho=\varrho(t, x)$ and velocity $\mathbf{u}=\mathbf{u}(t , x)$ in a physical domain in $\mathbb R^3$ satisfying the \textit{mass and momentum balance equations} given respectively by
\begin{equation}
\begin{aligned}
\label{massAndMomentum}
\partial_t\varrho + \mathrm{div}(\varrho \mathbf{u}) = 0,
\\
\partial_t(\varrho \mathbf{u}) + \mathrm{div}(\varrho \mathbf{u} \otimes \mathbf{u}) = \mathrm{div}\mathbb{T} + \varrho \mathbf{f}.
\end{aligned}
\end{equation}
Here $\mathbf{f}$ is some external force and $\mathbb{T}$ the \textit{stress tensor}. By  \textit{Stokes' law}, $\mathbb{T}$ satisfies $\mathbb{T}= \mathbb{S}- p\mathbb{I}$ where $p=p(\varrho)$ is the pressure and  $\mathbb{S}=\mathbb{S}(\nabla\mathbf{u})$ the \textit{viscous stress tensor}. In following \textit{Newton's law of viscosity}, we assume that $\mathbb{S}$ satisfies 
\begin{align*}
\mathbb{S}= \nu\left(\nabla \mathbf{u} + \nabla^T\mathbf{u}  \right) + \lambda\,\mathrm{div}\,\mathbf{u}\mathbb{I}
\end{align*}  
with viscosity coefficients satisfying $\nu>0$, $ \lambda + \frac{2}{3}\nu\geq 0.$
For the pressure, we suppose the $\gamma$-law
\begin{align*}
p=\frac{1}{\mathrm{Ma}^2}\varrho^\gamma
\end{align*}
where $\mathrm{Ma}>0$ is the Mach number and $\gamma>\frac{3}{2}$, the adiabatic exponent. In order to study the existence of solutions
to system \eqref{massAndMomentum}, it has to be complemented by initial and boundary conditions (very common are periodic boundary conditions, no-slip boundary conditions and the whole space). The existence of weak solutions to  \eqref{massAndMomentum} has been shown in the fundamental book by Lions
\cite{lions1998mathematical}
and extended to physical reasonable situations by Feireisl \cite{feireisl2001compactness, feireisl2001existence},
giving a compressible analogue of the pioneering work by Leray \cite{leray1934mouvement} on the incompressible case. These results involve the concept of \textit{weak solutions} where derivatives have to be understood in the \textit{sense of distributions}. This concept has since become an integral technique in the study of nonlinear PDE's.

In recent years, there has been an increasing interest in random influences
on fluid motions. It can take into account, for example, physical, empirical or numerical uncertainties and is commonly used to model turbulence in the fluid motion.

As far as we know, the first result on the existence of solution to the stochastic compressible system is due to \cite{tornatore1997one}. This was done in 1-D and later for a special periodic 2-D case in \cite{tornatore2000global}. The latter mostly relied on existence arguments developed in \cite{vaigant1995existence}. In \cite{feireisl2013compressible}, a semi-deterministic approach based on results on multi-valued functions is used and follows in line with the incompressible analogue shown in \cite{bensoussan1973equations}. A fully stochastic theory has been developed in \cite{Hof}. The existence of martingale solutions has been shown in the case of periodic boundary conditions. This has been extended to Dirichlet boundary conditions in \cite{smith2015random}.

Compared to the stochastic compressible model, the incompressible system has been studied much  more intensively. It  first appeared in the seminal paper by Bensoussan and Temam \cite{bensoussan1973equations} which is based on a semi-deterministic approach. Later, the concept of a martingale solution of this system  was then introduced by Flandoli and Gatarek  \cite{flandoli1995martingale}. For a recent survey on the stochastic incompressible Navier--Stokes equations, we refer the reader to \cite{romito2016probabilistic} or to \cite{robinson2016recent} for the general survey including deterministic results.

The aim of this paper is to look at the situation on the whole space $\mathbb R^3$. This is particularly important for various applications and especially for those in which  the comparative size of the fluids domain far exceeds  the  speed of sound accompanying the fluid. See \cite{feireisl2009singular} for more details. Difficulties arise due to the lack of certain compactness tools which are available in the case of bounded domains. We shall study the system
\begin{equation}
\begin{aligned}
\label{comprSPDE0}
\mathrm{d}\varrho + \mathrm{div}(\varrho \mathbf{u})\mathrm{d}t = 0, \\
\mathrm{d}(\varrho \mathbf{u}) + [\mathrm{div}(\varrho \mathbf{u}\otimes \mathbf{u}-\mathbb S(\nabla\mathbf{u})) + \nabla p(\varrho)]\mathrm{d}t = \Phi(\varrho,\varrho \mathbf{u})\mathrm{d}W,
\end{aligned}
\end{equation}
in $Q_T=(0,T)\times\mathbb R^3$. A prototype
for the stochastic forcing term will be given by
\begin{equation}
\label{diffCoeciffients}
\Phi(\varrho,\varrho \mathbf{u})\mathrm{d}W \approx \varrho\mathrm{d}W^1 + \varrho \mathbf{u}\mathrm{d}W^2
\end{equation}
where   $W^1$ and $W^2$ is a pair of independent cylindrical Wiener processes. We refer to Sect. \ref{Preliminaries} for the precise assumptions on the noise and its coefficients.

The first main result of the present paper is the existence of finite energy weak martingale solutions to \eqref{comprSPDE0}. The precise statement is given in Theorem \ref{thm:dissi}. 
We approximate the system on the whole space by a sequence of periodic problems (where the period tends to infinity). After showing uniform a priori estimates, we use the stochastic compactness method based on the Jakubowski-Skorokhod representation theorem. In contrast to previous works, we adapt it to the situation on the whole space taking carefully into account, the lack of compact embeddings. In order to pass to the limit in the nonlinear pressure term, we use properties of the effective viscous flux originally introduced by Lion \cite{lions1998mathematical} similar to \cite{Hof}. 

A fundamental question in compressible fluid mechanics is the relation to the incompressible model. If the Mach number is small, the fluid should behave asymptotically like an incompressible one, provided velocity and viscosity are small, and we are looking at large time scales, see \cite{klainerman1981singular}.
The problem has been studied rigorously in the deterministic case in \cite{lions1998incompressible, lions1998unicite, lions1999approche}, as a singular limit problem. A major problem to overcome is the rapid oscillation of acoustic waves due to the lack of compactness. A stochastic counterpart of this theory has very recently been established in \cite{breit2015incompressible}. The limit $\varepsilon$ of the system
\begin{equation}
\begin{aligned}
\label{comprSPDE}
\mathrm{d}\varrho_\varepsilon + \mathrm{div}(\varrho_\varepsilon \mathbf{u}_\varepsilon)\mathrm{d}t = 0, \\
\mathrm{d}(\varrho_\varepsilon \mathbf{u}_\varepsilon) + [\mathrm{div}(\varrho_\varepsilon \mathbf{u}_\varepsilon\otimes \mathbf{u}_\varepsilon-\mathbb S(\nabla\mathbf{u}_\varepsilon)) + \nabla \frac{\varrho^\gamma_\varepsilon}{\varepsilon^2}]\mathrm{d}t = \Phi(\varrho_\varepsilon,\varrho_\varepsilon \mathbf{u}_\varepsilon)\mathrm{d}W,
\end{aligned}
\end{equation}
has been analyzed under periodic boundary conditions. Given a  sequence of the so-called \textit{finite energy weak martingale solution} for \eqref{comprSPDE} (see next section for definition) where $\varepsilon\in(0,1)$ ,  its limit (as $\varepsilon\rightarrow 0$) is indeed a \textit{weak martingale solution} to the following incompressible system: 
\begin{equation}
\begin{aligned}
\label{incomprSPDE}
\mathrm{div}( \mathbf{u}) = 0, \\
\mathrm{d}( \mathbf{u}) + [\mathrm{div}( \mathbf{u}\otimes \mathbf{u})-\nu\Delta \mathbf{u}  +\nabla \tilde{p}]\mathrm{d}t = \mathcal{P}\Phi(1, \mathbf{u})\mathrm{d}W.
\end{aligned}
\end{equation}
Here $\tilde{p}$ is the associated pressure and $\mathcal{P}$ is the Helmholtz projection onto the space of solenoidal vector fields.

A major drawback in the approach in \cite{breit2015incompressible} is that the noise coefficient $\Phi(\varrho,\varrho\mathbf{u})$ has to be linear in the momentum $\varrho\mathbf{u}$. This is due to the aforementioned lack of compactness of momentum when $\varepsilon$ passes to zero. This cannot even be improved in the deterministic case. The situation on the whole space, however, is much better as a consequence of dispersive estimates for the acoustic wave equations, see Proposition \ref{gradientPartOfMomentumConvergece}. We apply them 
to the stochastic wave equation and hence are  able to prove strong convergence of the momentum, see Lemma \ref{momentumStrong}.
Based on this, we are able to prove the convergence of \eqref{comprSPDE} to \eqref{incomprSPDE} under much more general assumptions on the noise coefficients. See Theorem \ref{thm:one} for details.

In Sect. \ref{Preliminaries}, we state the required assumptions satisfied by the various quantities used in this paper, as well as some useful function space estimates.
We define the concept of a solution, state the required boundary condition applicable in our setting and finally state the main results.

In Sect.  \ref{existence}, we are concerned with  the proof of Theorem \ref{thm:dissi}, giving existence of martingale solutions on the whole space. Based on this result, we devote Sect. \ref{singularLimit} to the proof of Theorem \ref{thm:one}; the low-Mach number limit on the whole space.

\section{Preliminaries}
\label{Preliminaries}
Throughout this paper, the spatial dimension is $N=3$ and we assume that $(\Omega,\mathscr{F},(\mathscr{F}_t)_{t\geq0},\mathbb{P})$ is a stochastic basis with a complete right-continuous filtration,
$W$ is a $(\mathscr{F}_t)$-cylindrical Wiener process, that is, there exists a family of mutually independent real-valued Brownian motions $(\beta_k)_{k\in\mathbb{N}}$ and orthonormal basis $(e_k)_{k\in\mathbb{N}}$ of a separable Hilbert space $\mathfrak{U}$ such that
\begin{align*}
W(t) =  \sum_{k\in\mathbb{N}}\beta_k(t)e_k, \quad t\in[0,T].
\end{align*}
We also assume that $\varrho  \in  L^\gamma_{\mathrm{loc}}(\mathbb{R}^3)$, $\varrho\geq 0$, and $\mathbf{u}\in L^2_{\mathrm{loc}}(\mathbb{R}^3)$ so that $\sqrt{\varrho}\mathbf{u} \in L^2_{\mathrm{loc}}(\mathbb{R}^3)$.

Now let set $\mathbf{q}=\varrho\mathbf{u}$ and assume that there exists a compact set $\mathcal{K}\subset\mathbb{R}^3$ and some functions $g_k: \mathbb{R}^3\times \mathbb{R}\times\mathbb{R}^3  \rightarrow \mathbb{R}^3$  such that 
\begin{align}
\label{noiseSupport}
g_k\in C^1_0 \left(\mathcal{K}\right), \quad \text{for any } k\in\mathbb{N},
\end{align}
and in addition, satisfies the following growth conditions:
\begin{equation}
\begin{aligned}
\label{stochCoeffBound}
\sum_{k\in\mathbb{N}}  \vert g_k(x, \varrho,  \mathbf{q})   \vert^2   \leq c\,\left( \varrho^2 + \vert \mathbf{q}\vert^2\right),
\quad
\sum_{k\in\mathbb{N}}  \vert\nabla_{\varrho,\mathbf{q}} \,g_k(x, \varrho, \mathbf{q}) \vert^2   \leq c.
\end{aligned}
\end{equation}
Then if we define the map $\Phi(\varrho, \varrho \mathbf{u}):\mathfrak{U}\rightarrow    L^1(\mathcal{K})$ by
$\Phi(\varrho, \varrho \mathbf{u}) e_k   =  g_k(\cdot, \varrho(\cdot), \varrho \mathbf{u} (\cdot))$,  we can use   the embedding $L^1(\mathcal{K})\hookrightarrow  W^{-l,2}(\mathcal{K})$ where $l>\frac{3}{2}$, to show that
$
\Vert \Phi(\varrho, \varrho \mathbf{u})  \Vert^2_{L_2(\mathfrak{U};
W^{-l,2}(\mathcal{K}))}$ is uniformly bounded provided  $\varrho  \in  L^\gamma_{\mathrm{loc}}(\mathbb{R}^3)$ and  $\sqrt{\varrho}\mathbf{u} \in L^2_{\mathrm{loc}}(\mathbb{R}^3)$. See \cite[Eq. 2.3]{Hof}. As such, the stochastic integral $\int_0^\cdot\Phi(\varrho,\varrho \mathbf{u})\mathrm{d}W$ is a well-defined $(\mathscr{F}_t)$-martingale taking value in $W^{-l,2}_{\mathrm{loc}}(\mathbb{R}^3)$.

Lastly, we define the auxiliary space $\mathfrak{U}_0 \supset  \mathfrak{U}$ via
\begin{align*}
\mathfrak{U}_0  =\left\{\mathbf{u}= \sum_{k\geq1}c_ke_k\,;\quad  \sum_{k\geq 1} \frac{c^2_k}{k^2}<\infty \right\}
\end{align*}
and endow it with the norm
\begin{align*}
\Vert  \mathbf{u}  \Vert^2_{\mathfrak{U}_0}  =  \sum_{k\in\mathbb{N}} \frac{c^2_k}{k^2}, \quad   \mathbf{u}=\sum_{k\in\mathbb{N}}c_ke_k.
\end{align*}
Then it can be shown that $W$ has $\mathbb{P}$-a.s. $C([0,T];\mathfrak{U}_0)$ sample paths with the Hilbert--Schmidt embedding $\mathfrak{U}\hookrightarrow \mathfrak{U}_0$. See \cite{da2014stochastic}.

\subsection{Sobolev inequalities for the homogeneous Sobolev space}
\label{sec:homo}
As we shall see shortly, the compactness techniques used in this paper involves certain estimates whose constants must necessarily be independent of the size of the domain. We therefore require the homogeneous Sobolev space 
\[
D^{1,q}(\mathcal{O}) =
\begin{dcases}
\mathbf{u}\in \mathcal{D}'(\mathcal{O}) \,:\, \mathbf{u}\in L^\frac{3q}{3-q}(\mathcal{O}),\, \nabla\mathbf{u} \in L^q(\mathcal{O})  & \text{ if } 1\leq q<3 \\
\mathbf{u}= \{\overline{\mathbf{u}}+c \}_{c\in\mathbb{R}} \,:\, \mathbf{u}\in L^q_{\mathrm{loc}}(\mathcal{O}),\, \nabla\mathbf{u} \in L^q(\mathcal{O})  & \text{ if } q\geq 3
\end{dcases}
\]
which gives such Sobolev-type estimates. Here $\mathcal{O}$ is an exterior or an unbounded domain, for example $\mathcal{O}=\mathbb{R}^3$. In particular, given a function $\mathbf{u}\in D^{1,q}(\mathcal{O})$, we have that for any $1\leq q <3$,
\begin{align}
\Vert \mathbf{u} \Vert_{L^\frac{3q}{3-q}(\mathcal{O})}&\leq c_{q}\Vert\nabla \mathbf{u}\Vert_{L^q(\mathcal{O})} \label{homogeneousPoincare}
\end{align}
See \cite[Chapter II]{galdi2011introduction} for more details. Note that the constant above is independent of  the size of $\mathcal{O}$, unlike in the case of the usual Sobolev--Poinc\'{a}re's inequality.

To continue, let us define the concept of a solution used in this paper.

\begin{definition}
\label{def:martSolution}
If $\Lambda$ is a Borel probability measure on $L^\gamma(\mathbb{R}^3)\times L^\frac{2\gamma}{\gamma+1}(\mathbb{R}^3)$, then we say that
\begin{align}
\label{weakMartSol}
\left[(\Omega,\mathscr{F},(\mathscr{F}_t),\mathbb{P});\varrho, \mathbf{u}, W  \right]
\end{align}
is a \textit{finite energy weak martingale solution} of Eq. \eqref{comprSPDE} with initial law $\Lambda$ provided:
\begin{enumerate}
\item $(\Omega,\mathscr{F},(\mathscr{F}_t),\mathbb{P})$ is a stochastic basis with a complete right-continuous filtration,
\item $W$ is a $(\mathscr{F}_t)$-cylindrical Wiener process,
\item the density $\varrho$ satisfies $\varrho\geq 0,$ $t\rightarrow \langle \varrho(t,\cdot), \phi  \rangle  \,\in \,C[0,T]$ for any $\phi \in C^\infty_c (\mathbb{R}^3)$ $\mathbb{P}-$a.s., the function  $t \mapsto \langle \varrho(t,\cdot), \phi  \rangle$ is progressively measurable, and
\begin{align*}
\mathbb{E}\left[  \sup_{t\in[0,T]}\Vert \varrho(t,\cdot)   \Vert^p_{L^\gamma(\mathcal{K})}  \right]   <   \infty   \text{ for all } 1\leq p   <\infty,
\end{align*}
and for all $\mathcal{K}\subset\mathbb{R}^3$ with  $\mathcal{K}$ compact,
\item the momentum $\varrho \mathbf{u}$ satisfies $t\rightarrow \langle \varrho \mathbf{u}, \phi  \rangle  \,\in \,C[0,T]$ for any $\phi \in C^\infty_c (\mathbb{R}^3)$ $\mathbb{P}-$a.s., the function  $t \mapsto \langle \varrho \mathbf{u}, \phi  \rangle$ is progressively measurable, and $\text{ for all } 1\leq p   <\infty$
\begin{align*}
\mathbb{E}\left[  \sup_{t\in[0,T]}\Vert \sqrt{\varrho} \mathbf{u}   \Vert^p_{L^2(\mathcal{K})}  \right] <\infty, \quad
\mathbb{E}\left[  \sup_{t\in[0,T]}\Vert \varrho \mathbf{u}   \Vert^p_{L^{\frac{2\gamma}{\gamma+1}}(\mathcal{K})}  \right]   <   \infty   ,
\end{align*}
for all $\mathcal{K}\subset\mathbb{R}^3$,  $\mathcal{K}$ compact,
\item the velocity field $\mathbf{u}$ is $(\mathscr{F}_t)$-adapted, $\mathbf{u}\in  L^p\left( \Omega; L^2\left(0,T;W^{1,2}_{\mathrm{loc}}\left(\mathbb{R}^3 \right)  \right)   \right)$ 
and,
\begin{align*}
\mathbb{E}\left[\left( \int_0^T \Vert  \mathbf{u}  \Vert^2_{W^{1,2}(\mathcal{K})} \mathrm{d}t   \right)^p  \right]  <   \infty   \text{ for all } 1\leq p   <\infty,
\end{align*}
for all $\mathcal{K}\subset\mathbb{R}^3$,  $\mathcal{K}$ compact,
\item $\Lambda = \mathbb{P}\circ (\varrho(0), \varrho \mathbf{u}(0))^{-1}$,
\item for all $\psi \in C^\infty_c (\mathbb{R}^3)$ and $\phi \in C^\infty_c (\mathbb{R}^3)$ and all $t\in [0,T]$, it holds $\mathbb{P}-$a.s.
\begin{align*}
\langle \varrho(t), \psi \rangle   &=  \langle \varrho(0), \psi \rangle  +  \int_0^t \langle \varrho \mathbf{u}, \nabla \psi \rangle  \mathrm{d}s,
\\
\langle \varrho \mathbf{u}(t), \phi \rangle   &=  \langle \varrho \mathbf{u}(0), \phi \rangle  +  \int_0^t \langle \varrho \mathbf{u}\otimes \mathbf{u}, \nabla \phi \rangle  \mathrm{d}s - \nu\int_0^t \langle \nabla \mathbf{u}, \nabla \phi \rangle  \mathrm{d}s 
\\
&-(\lambda+ \nu )\int_0^t \langle \mathrm{div}\mathbf{u}, \mathrm{div} \phi \rangle  \mathrm{d}s    +    \frac{1}{\mathrm{Ma}^2}  \int_0^t \langle \varrho^\gamma, \mathrm{div} \phi \rangle  \mathrm{d}s
\\
&+\int_0^t \langle \Phi(\varrho, \varrho \mathbf{u})\mathrm{d}W, \phi \rangle  ,
\end{align*}
\item for any $1\leq p<\infty$, the  energy estimate 
\begin{equation}
\begin{aligned}
\label{energyEx}
\mathbb{E}\, \Bigg[\sup_{t\in[0,T]}\int_{\mathbb{R}^3}\Bigg(\frac{\varrho\vert\mathbf{u}\vert^2}{2}  &+   H(\varrho) \Bigg)(t)\,\mathrm{d}x \Bigg]^p  + \mathbb{E}\,\left[ \int_{Q_T}\mathbb{S}(\nabla\mathbf{u}):\nabla \mathbf{u}\,\mathrm{d}x\mathrm{d}s\right]^p
\\
&\leq c_p \,\left( 1  + \mathbb{E}\,\left[\int_{\mathbb{R}^3}\left(\frac{\vert \mathbf{q}_0 \vert^2}{2\varrho_0} + H(\varrho(0,\cdot)) \right)\mathrm{d}x\right]^p\right),  
\end{aligned}
\end{equation}
holds where $Q_T:=(0,T)\times\mathbb{R}^3$ and where  
\begin{align}
\label{pressurePotential}
H(\varrho ) = \frac{a}{\gamma -1}\left(\varrho^\gamma  -  \gamma \overline{\varrho}^{\gamma-1}(\varrho - \overline{\varrho})  - \overline{\varrho}^\gamma  \right).
\end{align}
is the \textit{pressure potential} for constants $a,\overline{\varrho}>0$.
\item In addition, \eqref{comprSPDE0}$_1$ holds in the renormalized sense. That is, for any $\phi\in \mathcal{D}'(\mathbb{R}^3)$ and $b \in C^0[0,\infty)\cap  C^1(0,\infty)$ such that $\vert b'(t) \vert \leq ct^{-\lambda_0}$, $t\in(0,1]$, $\lambda_0<1$ and $\vert b'(t) \vert  \leq ct^{\lambda_1}$, $t\geq1$ where $c>0$ and $-1< \lambda_1<\infty$, we have that
\begin{align}
\label{renormalizedCont}
\mathrm{d}\langle b(\varrho),\phi \rangle = \langle b(\varrho)\mathbf{u},\nabla\phi \rangle\mathrm{d}t  -  \langle \left(b(\varrho) - b'(\varrho)\varrho\right)\mathrm{div}\mathbf{u},  \phi \rangle\mathrm{d}t.
\end{align} 
\end{enumerate}
\end{definition}

\begin{remark}
The definition above also holds for functions defined on the periodic space $\mathbb{T}^3_L=([-L,L ]\vert_{\{-L,L\}})^3= (\mathbb{R}\,\vert \, 2L\mathbb{Z})^3$ for any $L\geq1$, rather than on the whole space $\mathbb{R}^3$. In that case, it even suffices to consider just smooth test functions which are not necessarily  compactly supported. See for example \cite{breit2015compressible, breit2015incompressible, Hof}.
\end{remark}

\begin{definition}
\label{def:martSolutionIncompre}
If $\Lambda$ is a Borel probability measure on $L^2_{div}(\mathbb{R}^3)$, then
we say that $\left[(\Omega,\mathscr{F},(\mathscr{F}_t),\mathbb{P}), \mathbf{u}, W  \right]$ is a \textit{weak martingale solution} of Eq. \eqref{incomprSPDE} with initial law $\Lambda$ provided:
\begin{enumerate}
\item $(\Omega,\mathscr{F},(\mathscr{F}_t),\mathbb{P})$ is a stochastic basis with a complete right-continuous filtration,
\item $W$ is a $(\mathscr{F}_t)$-cylindrical Wiener process,
\item $\mathbf{u}$ is $(\mathscr{F}_t)$-adapted, $\mathbf{u}\in   C_w\left( [0,T];L^2_{\mathrm{div}}(\mathbb{R}^3) \right) \cap L^2(0,T;W^{1,2}_{\mathrm{div}}(\mathbb{R}^3)  )$ $ \mathbb{P}-a.s. $ and,
\begin{align*}
\mathbb{E}\left[ \sup_{(0,T)} \Vert  \mathbf{u}  \Vert^2_{L^{2}(\mathbb{R}^3)}  \right]^p
+
\mathbb{E}\left[ \left(\int_0^T \Vert  \mathbf{u} \Vert^p_{W^{1,2}(\mathbb{R}^3)} \mathrm{d}t  \right)^p \right]   <   \infty   \text{ for all } 1\leq p   <\infty,
\end{align*}
\item $\Lambda = \mathbb{P}\circ (\mathbf{u}(0))^{-1}$,
\item for all $\phi \in C^\infty_{c,\mathrm{div}} (\mathbb{R}^3)$ and all $t\in [0,T]$, it holds $\mathbb{P}-$a.s.
\begin{align*}
\langle  \mathbf{u}(t), \phi \rangle   &=  \langle  \mathbf{u}(0), \phi \rangle  +  \int_0^t \left[\langle  \mathbf{u}\otimes \mathbf{u}, \nabla \phi \rangle   - \nu \langle \nabla \mathbf{u}, \nabla \phi \rangle \right] \mathrm{d}s 
+\int_0^t \langle \mathcal{P}\Phi(1, \mathbf{u})\mathrm{d}W, \phi \rangle  ,
\end{align*}
\end{enumerate}
\end{definition}

Existence of weak martingale solutions as defined in Definition \ref{def:martSolutionIncompre} has been shown to exist under suitable growth conditions on the noise term. We refer the reader to \cite{mikulevicius2005global}, albeit stated in the Stratonovich sense. A global-in-space existence result stated in the It\^o form appears to be absent from the literatures although it is certainly expected. However, this is a by product of the singular limit problem that we study in this paper. See Theorem \ref{thm:one} below. For bounded domains, see for example, \cite{capinski1994stochastic, flandoli1995martingale}.

\subsection{Prescribed boundary conditions }
Let assume that the right-hand side of the energy inequality \eqref{energyEx} is finite. Then we can deduce from \eqref{pressurePotential} that
\begin{align}
\label{boundaryCondDensity}
\lim_{\vert x\vert\rightarrow\infty}\varrho(x)=\overline{\varrho}
\end{align}
for some $\overline{\varrho}>0$.  This is because if we apply Taylor's expansion around the constant $\overline{\varrho}$ for the function $f(\varrho)=\varrho^\gamma$, we can rewrite \eqref{pressurePotential} as
\begin{align}
\label{pressurePotentialA}
H(\varrho)=\frac{a\gamma z^{\gamma-2}}{2}(\varrho-\overline{\varrho})^2,\quad z\in[\varrho,\overline{\varrho}] \text{ or }  z\in[\overline{\varrho},\varrho]
\end{align}
and so the boundedness of the left-hand side of \eqref{energyEx} means that the difference $\varrho-\overline{\varrho}\in L^p(\Omega;L^\infty(0,T;L^{\mathrm{min}\{2,\gamma\}}(\mathbb{R}^3)))$ when  \eqref{pressurePotentialA} is substituted into  \eqref{energyEx}.

Furthermore, we also have that $\varrho\vert\mathbf{u}\vert^2  \in L^1(\Omega;L^\infty(0,T;L^1(\mathbb{R}^3)))$ and as such,
\begin{align}
\label{boundaryCondPartialMomem}
\lim_{\vert x\vert\rightarrow\infty}\varrho(x)\vert \mathbf{u}(x)\vert^2=0.
\end{align}
By combining \eqref{boundaryCondDensity} and \eqref{boundaryCondPartialMomem} (keeping in mind that $\overline{\varrho}\neq0$), it is reasonable to impose the boundary condition
\begin{align}
\label{boundaryCondVel}
\lim_{\vert x\vert\rightarrow\infty}\mathbf{u}(x)=0.
\end{align}

\subsection{Main results}

We now state the main results of this paper.

\begin{theorem}
\label{thm:dissi}
Let $\gamma>\frac{3}{2}$ and let $\Lambda$ be a probability law on $L^\gamma(\mathbb{R}^3)\times  L^{\frac{2\gamma}{\gamma+1}}(\mathbb{R}^3) $ satisfying
\begin{align*}
\Lambda  \Big\{ (\varrho, \mathbf{q})\in L^\gamma(\mathbb{R}^3) &\times  L^{\frac{2\gamma}{\gamma+1}}(\mathbb{R}^3)  \, : \,  \varrho\geq 0,
\\
& M_1^\mathcal{K}\leq \int_{\mathcal{K}} \varrho\,\mathrm{d}x\leq M^\mathcal{K}_2, \mathbf{q}\vert_{\{\varrho=0\}}=0,\, \left\vert\frac{\varrho-1}{\varepsilon} \right\vert\leq M^\mathcal{K}_2 \Big\}=1,
\\
&\int_{L_x^\gamma\times L_x^{\frac{2\gamma}{\gamma+1}}}  \left\Vert \frac{1}{2}\frac{\vert\mathbf{q}\vert^2}{\varrho}  + H(\varrho)  \right\Vert^p_{L^1_x}   \mathrm{d}\Lambda(\varrho,  \mathbf{q}) \leq c_{p}<\infty,
\end{align*}
for all $0\leq p<\infty$ and any compact set $\mathcal{K}\subset\mathbb{R}^3$ with constants $0<M_1^\mathcal{K}<M_2^\mathcal{K}$ which are independent of $\varepsilon\in(0,1)$. Also assume that \eqref{noiseSupport} and \eqref{stochCoeffBound} holds. Then there exists a finite energy weak martingale solution of \eqref{comprSPDE} in the sense of Definition \ref{def:martSolution}, with initial law $\Lambda$.
\end{theorem}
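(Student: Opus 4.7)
The plan is to approximate the whole space problem by a sequence of periodic problems on $\mathbb{T}^3_L = (\mathbb{R}/2L\mathbb{Z})^3$ with $L\to\infty$, and to pass to the limit using local compactness. For each fixed $L\geq 1$ I would truncate/extend the initial law $\Lambda$ to a probability law $\Lambda_L$ on $L^\gamma(\mathbb{T}^3_L)\times L^{2\gamma/(\gamma+1)}(\mathbb{T}^3_L)$ via a smooth cut-off that matches the far-field density $\overline{\varrho}$ and preserves the uniform bounds $M_1^\mathcal{K},M_2^\mathcal{K}$ on compacta. Existence of a finite energy weak martingale solution $(\varrho_L,\mathbf{u}_L,W_L)$ to \eqref{comprSPDE} on $[0,T]\times\mathbb{T}^3_L$ then follows from the periodic theory of \cite{Hof, breit2015compressible}, and is valid in the renormalized sense because $\gamma>3/2$.

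The next step is to derive $L$-uniform a priori bounds. The energy estimate \eqref{energyEx} together with the Taylor expansion \eqref{pressurePotentialA} yield, for every $1\leq p<\infty$,
\begin{align*}
\mathbb{E}\sup_{t\leq T}\|\varrho_L-\overline{\varrho}\|_{L^{\min\{2,\gamma\}}(\mathbb{T}^3_L)}^{2p}+\mathbb{E}\sup_{t\leq T}\|\sqrt{\varrho_L}\mathbf{u}_L\|_{L^2(\mathbb{T}^3_L)}^{2p}+\mathbb{E}\|\nabla\mathbf{u}_L\|_{L^2(0,T;L^2(\mathbb{T}^3_L))}^{2p}\leq c_p
\end{align*}
with $c_p$ independent of $L$. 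The homogeneous Sobolev inequality \eqref{homogeneousPoincare}, whose constant is \emph{independent of the size of the domain}, upgrades this to a uniform control of $\mathbf{u}_L$ in $L^2(0,T;L^6)$ and hence of $\mathbf{q}_L=\varrho_L\mathbf{u}_L$ locally in $L^{2\gamma/(\gamma+1)}$. A Bogovski\u{\i}-type test function argument localized on a fixed compact $\mathcal{K}$ then yields the improved integrability $\varrho_L^{\gamma+\theta}\in L^1((0,T)\times\mathcal{K})$ for some $\theta>0$, with bounds depending only on $\mathcal{K}$.

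For stochastic compactness I would fix an exhaustion $\mathbb{R}^3=\bigcup_n\mathcal{K}_n$ by compacta. On each $\mathcal{K}_n$ the laws of $(\varrho_L,\mathbf{q}_L,\mathbf{u}_L,W_L)$ are tight in the product of $C_w([0,T];L^\gamma(\mathcal{K}_n))\times C_w([0,T];L^{2\gamma/(\gamma+1)}(\mathcal{K}_n))\times(L^2(0,T;W^{1,2}(\mathcal{K}_n)),w)\times C([0,T];\mathfrak{U}_0)$, where the required time-regularity of $\langle\varrho_L,\psi\rangle$ and $\langle\mathbf{q}_L,\phi\rangle$ for test functions comes from the equations plus a Kolmogorov continuity argument applied to the It\^o increments. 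A diagonal extraction over $n$ combined with the Jakubowski--Skorokhod representation theorem then furnishes a new probability space $(\tilde{\Omega},\tilde{\mathscr{F}},\tilde{\mathbb{P}})$ and random variables $(\tilde{\varrho}_L,\tilde{\mathbf{u}}_L,\tilde{W}_L)\to(\tilde{\varrho},\tilde{\mathbf{u}},\tilde W)$ $\tilde{\mathbb{P}}$-a.s.\ in the above local topologies.

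The final and most delicate step is identifying the limit as a finite energy weak martingale solution. The linear and convective terms pass to the limit by the locally strong convergence of $\mathbf{q}_L$ and $\mathbf{u}_L$ (obtained via an Aubin--Lions-type argument on each $\mathcal{K}_n$), while the stochastic integral is identified by the martingale method of Debussche--Glatt-Holtz--Temam; the fact that the noise coefficients are supported in the fixed compact $\mathcal{K}$ by \eqref{noiseSupport} eliminates the usual difficulty with the noise on unbounded domains. The \textbf{main obstacle} is the nonlinear pressure $\varrho_L^\gamma$: its weak limit $\overline{\varrho^\gamma}$ need not a priori coincide with $\tilde{\varrho}^\gamma$. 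I would handle this by adapting Lions' effective viscous flux identity together with Feireisl's oscillation defect measure, using the renormalized continuity equation \eqref{renormalizedCont} inherited from the DiPerna--Lions commutator lemma. All these calculations localize to each $\mathcal{K}_n$ thanks to the compact support of $\Phi$ and the homogeneous Sobolev estimate, so they reduce to a Lions--Feireisl computation on each compactum which is then patched together by the diagonal argument to give strong convergence $\tilde{\varrho}_L\to\tilde{\varrho}$ in $L^1_{\mathrm{loc}}((0,T)\times\mathbb{R}^3)$ and hence $\overline{\varrho^\gamma}=\tilde{\varrho}^\gamma$. Lower semicontinuity of the energy and of the Hilbert--Schmidt norm transfers \eqref{energyEx} to the limit, completing the construction.
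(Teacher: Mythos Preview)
Your outline matches the paper's strategy almost exactly: periodic approximation on $\mathbb{T}^3_L$, $L$-uniform energy bounds, local higher integrability of density via Bogovski\u{\i}, Jakubowski--Skorokhod with a diagonal argument over an exhaustion, and Lions--Feireisl effective viscous flux plus oscillation defect measure to identify the pressure. Two technical points you gloss over are handled more carefully in the paper and are worth flagging.

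First, your appeal to \eqref{homogeneousPoincare} to get $\mathbf{u}_L\in L^2(0,T;L^6)$ uniformly in $L$ is not quite right as stated: the homogeneous Sobolev inequality is for $\mathbb{R}^3$ (or exterior domains), not for $\mathbb{T}^3_L$, where constants violate it. What you actually need is a uniform-in-$L$ bound on the \emph{mean} of $\mathbf{u}_L$ over a fixed ball $B_r$, and this is precisely where the lower mass bound $M_1^{\mathcal{K}}\leq\int_{\mathcal{K}}\varrho\,\mathrm{d}x$ in the hypotheses on $\Lambda$ enters. The paper writes
\[
\|\varrho_0\|_{L^1(B_r)}\,\big|(\mathbf{u}_L)_{B_r}\big|\leq c(r)\|\varrho_L\|_{L^\gamma(B_r)}\|\nabla\mathbf{u}_L\|_{L^2(B_r)}+c\|\varrho_L\|_{L^\gamma(B_r)}^{1/2}+c\|\varrho_L|\mathbf{u}_L|^2\|_{L^1(B_r)}
\]
and then divides by the positive mass lower bound. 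Without this step you do not control the amplitude of $\mathbf{u}_L$ locally, and all the downstream local bounds ($\varrho_L\mathbf{u}_L$, $\varrho_L\mathbf{u}_L\otimes\mathbf{u}_L$, Bogovski\u{\i}) would fail to be uniform in $L$.

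Second, your claim of ``locally strong convergence of \ldots\ $\mathbf{u}_L$ via Aubin--Lions'' overstates what is available: the paper only obtains $\tilde{\mathbf{u}}_n\rightharpoonup\tilde{\mathbf{u}}$ weakly in $L^2(0,T;W^{1,2}_{\mathrm{loc}})$, and the convective term is handled by pairing this with the strong convergence $\tilde{\varrho}_n\tilde{\mathbf{u}}_n\to\tilde{\varrho}\tilde{\mathbf{u}}$ in $L^2(0,T;W^{-1,2}_{\mathrm{loc}})$ (coming from the compact embedding $L^{2\gamma/(\gamma+1)}\hookrightarrow W^{-1,2}$ on compacta). Aubin--Lions does not give strong $L^2$ compactness of velocity here because there is no time-regularity estimate on $\mathbf{u}_L$ itself.
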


\begin{remark}
The assumption $\left\vert\frac{\varrho-1}{\varepsilon} \right\vert\leq M^\mathcal{K}_2 $ given in the law above is not restrictive and can actually be dropped. However, it is needed in the proof of Theorem \ref{thm:one} below.
\end{remark}

\begin{theorem}
\label{thm:one}
Let   $\Lambda$ be a given Borel probability measure on $L^2(\mathbb{R}^3)$ and for $\varepsilon\in(0,1)$, we let $\Lambda_\varepsilon$ be a Borel probability measure on $L^\gamma(\mathbb{R}^3)\times  L^{\frac{2\gamma}{\gamma+1}}(\mathbb{R}^3)$ where $\gamma>3/2$ is such that the initial law in Theorem \ref{thm:dissi} holds and where  the marginal law of $\Lambda_\varepsilon$ corresponding to the second component converges to $\Lambda$ weakly in the sense of measures on $L^{\frac{2\gamma}{\gamma+1}}(\mathbb{R}^3)$. If $\left[(\Omega^\varepsilon,\mathscr{F}^\varepsilon,(\mathscr{F}^\varepsilon_t),\mathbb{P}^\varepsilon);\varrho_\varepsilon, \mathbf{u}_\varepsilon, W_\varepsilon  \right]$ is a finite energy weak martingale solution of \eqref{comprSPDE} with initial law $\Lambda_\varepsilon$, then
\begin{align*}
(\varrho_\varepsilon-1) \rightarrow 0\quad&\text{in law in}\quad L^\infty(0,T;L^{\min\{2,\gamma\}}(\mathbb{R}^3)) 
\\
 \mathbf{u}_\varepsilon \rightarrow  \mathbf{u}\quad&\text{in law in}\quad \left(L^2(0,T;W_{\mathrm{loc}}^{1,2}(\mathbb{R}^3)),w\right)
\\
\varrho_\varepsilon  \mathbf{u}_\varepsilon \rightarrow  \mathbf{u}\quad&\text{in law in}\quad L^2(0,T;L_{\mathrm{loc}}^{r}(\mathbb{R}^3)) 
\end{align*}
where $\mathbf{u}$ is a weak martingale solution of \eqref{incomprSPDE} in the sense of Definition \ref{def:martSolutionIncompre} with the initial law $\Lambda$ and $r\in(\frac{3}{2},6)$. 
\end{theorem}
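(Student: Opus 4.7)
My plan is to adapt the singular-limit strategy of \cite{breit2015incompressible} to the whole space $\mathbb{R}^3$, the crucial new ingredient being the whole-space dispersive estimates for the acoustic wave equation already referenced as Proposition \ref{gradientPartOfMomentumConvergece} and Lemma \ref{momentumStrong}, which will furnish \emph{strong} convergence of the momentum. The roadmap is: uniform estimates, tightness and Jakubowski--Skorokhod representation, strong convergence of the momentum via Helmholtz decomposition plus dispersion, and finally identification of the limit.

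First I would collect uniform-in-$\varepsilon$ estimates. The assumption $|(\varrho_0-1)/\varepsilon|\leq M_2^{\mathcal{K}}$ together with the energy inequality \eqref{energyEx} (whose pressure potential carries a $1/\varepsilon^2$ prefactor in the low-Mach scaling) and the expansion \eqref{pressurePotentialA} yield $(\varrho_\varepsilon-1)/\varepsilon$ bounded in $L^p(\Omega;L^\infty(0,T;L^{\min\{2,\gamma\}}(\mathbb{R}^3)))$, $\sqrt{\varrho_\varepsilon}\mathbf{u}_\varepsilon$ bounded in $L^p(\Omega;L^\infty_tL^2_x)$, and, via Korn's inequality combined with the whole-space Sobolev estimate \eqref{homogeneousPoincare}, $\mathbf{u}_\varepsilon$ bounded in $L^p(\Omega;L^2_tW^{1,2}_{\mathrm{loc},x})$. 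The first claim $\varrho_\varepsilon-1\to 0$ in law is then immediate. Tightness of the joint laws of $(\varrho_\varepsilon,\mathbf{u}_\varepsilon,\varrho_\varepsilon\mathbf{u}_\varepsilon,W_\varepsilon)$ on a product space incorporating the weak topology on $L^2_tW^{1,2}_{\mathrm{loc},x}$ is obtained by combining these spatial bounds with fractional time-H\"older regularity extracted from the evolution equations, the stochastic integral being controlled by Burkholder--Davis--Gundy and \eqref{stochCoeffBound}. The Jakubowski--Skorokhod theorem, applicable on such non-metric spaces, then supplies a new probability basis and a.s.\ convergent versions of the variables.

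The core step, and the main obstacle, is strong convergence of the momentum in $L^2(0,T;L^r_{\mathrm{loc}}(\mathbb{R}^3))$. I decompose $\varrho_\varepsilon\mathbf{u}_\varepsilon = \mathcal{P}(\varrho_\varepsilon\mathbf{u}_\varepsilon) + \mathcal{Q}(\varrho_\varepsilon\mathbf{u}_\varepsilon)$ with $\mathcal{Q}=I-\mathcal{P}$. The solenoidal part obeys an equation insensitive to the singular pressure, and a stochastic Aubin--Lions argument \emph{\`a la} \cite{Hof} delivers its strong convergence. The potential part is driven by the stochastic acoustic system
\begin{align*}
\varepsilon\,\mathrm{d}\Bigl(\tfrac{\varrho_\varepsilon-1}{\varepsilon}\Bigr) + \mathrm{div}(\varrho_\varepsilon\mathbf{u}_\varepsilon)\,\mathrm{d}t &= 0, \\
\varepsilon\,\mathrm{d}(\varrho_\varepsilon\mathbf{u}_\varepsilon) + a\gamma\,\nabla\Bigl(\tfrac{\varrho_\varepsilon-1}{\varepsilon}\Bigr)\,\mathrm{d}t &= \varepsilon\,R_\varepsilon\,\mathrm{d}t + \varepsilon\,\Phi(\varrho_\varepsilon,\varrho_\varepsilon\mathbf{u}_\varepsilon)\,\mathrm{d}W_\varepsilon,
\end{align*}
obtained from \eqref{comprSPDE} after subtracting the constant state, where $R_\varepsilon$ bundles the convective and viscous remainders. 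The leading part is a wave equation of speed $O(1/\varepsilon)$; localizing and invoking the whole-space dispersive/Strichartz-type estimates of Proposition \ref{gradientPartOfMomentumConvergece} then forces $\mathcal{Q}(\varrho_\varepsilon\mathbf{u}_\varepsilon)\to 0$ strongly, which is precisely Lemma \ref{momentumStrong}. The most delicate point here is the stochastic Duhamel treatment of the forcing, where BDG and \eqref{stochCoeffBound} must absorb the explicit $\varepsilon$ into the dispersive decay.

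It remains to identify the limit. Testing the weak momentum identity against $\phi\in C^\infty_{c,\mathrm{div}}(\mathbb{R}^3)$ eliminates the singular pressure; the viscous term passes via weak convergence of $\nabla\mathbf{u}_\varepsilon$; the convective term closes because $\varrho_\varepsilon\mathbf{u}_\varepsilon$ converges strongly in $L^2_tL^r_{\mathrm{loc},x}$ while $\mathbf{u}_\varepsilon$ converges weakly in $L^2_tW^{1,2}_{\mathrm{loc},x}$; the stochastic integral is identified by the standard quadratic-variation/characteristic-function argument for cylindrical martingales, using continuity of $\Phi$ at $(\varrho,\mathbf{q})=(1,\mathbf{u})$ from \eqref{noiseSupport}--\eqref{stochCoeffBound}. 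Passing to the limit in the continuity equation yields $\mathrm{div}\,\mathbf{u}=0$, so the solenoidal projection transfers to the noise as $\mathcal{P}\Phi(1,\mathbf{u})\,\mathrm{d}W$, and $\mathbf{u}$ is a weak martingale solution of \eqref{incomprSPDE} in the sense of Definition \ref{def:martSolutionIncompre}.
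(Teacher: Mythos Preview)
Your proposal is correct and follows essentially the same route as the paper: uniform energy bounds, tightness plus Jakubowski--Skorokhod, Helmholtz splitting of the momentum, dispersive decay of the gradient part via the mild formulation of the stochastic acoustic system, and martingale identification of the limit. Two small calibrations: the dispersive input is the local-energy Strichartz-type bound (Lemma~\ref{lem:expBound}), while Proposition~\ref{gradientPartOfMomentumConvergece} is already the \emph{conclusion} $\mathcal{Q}(\tilde\varrho_\varepsilon\tilde{\mathbf u}_\varepsilon)\to 0$ and Lemma~\ref{momentumStrong} then upgrades the full momentum; and the stochastic Duhamel term is handled in the paper by It\^o isometry (after mollification) rather than BDG, which is what makes the $\varepsilon$-gain from the time rescaling cleanly visible.
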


\section{Proof of Theorem \ref{thm:dissi}}
\label{existence}

Let $\varrho_L$ and $\mathbf{u}_L$ be some density and velocity fields defined $\mathrm{d}\mathbb{P}\times \mathrm{d}t$ a.e. $(\omega,t)\in \Omega\times[0,T]$ on the space $\mathbb{T}^3_L$ such that  $\varrho_L$ and $\mathbf{u}_L$ satisfies the so-called \textit{dissipative} estimate; existence of which is shown in \cite[Eq. 3.2]{breit2015compressible} for the particular choice of $L=1$.

We observe  that  \cite[Eq. 3.2]{breit2015compressible} is translation invariant and as such, holds true for any fixed $L\geq1$. Also, the inequality is preserved if we replace $H_\delta(\varrho)$ by $H(\varrho)$. As such if we consider $\psi=\chi_{[0,t]}$, then we obtain the inequality:
\begin{equation}
\begin{aligned}
\label{dissipation1}
&\int_0^t \int_{\mathbb{T}^3_L}\mathbb{S}(\nabla\mathbf{u}_L):\nabla \mathbf{u}_L\,\mathrm{d}x\mathrm{d}s
+ \int_{\mathbb{T}^3_L}\left[\frac{\varrho_L(t)\vert\mathbf{u}_L(t)\vert^2}{2}+  H(\varrho_L(t)) \right]\mathrm{d}x 
\\
&\leq \int_{\mathbb{T}^3_L}\left[\frac{\vert (\varrho_L\mathbf{u}_L)(0)\vert^2}{2\varrho_L(0)} + H(\varrho_L(0)) \right]\mathrm{d}x
+  \int_0^t  \int_{\mathbb{T}^3_L} \mathbf{u}_L\cdot\Phi(\varrho_L,\varrho_L \mathbf{u}_L)\mathrm{d}x\,\mathrm{d}W
\\
&+\int_0^t \int_{\mathbb{T}^3_L}\sum_{k\in\mathbb{N}}\frac{\vert g_k(\varrho_L,\varrho_L\mathbf{u}_L)\vert^2}{2\varrho_L} \,\mathrm{d}x\mathrm{d}s
\end{aligned}
\end{equation}
However, due to \eqref{noiseSupport}, there is a compact set $\mathcal{K}\subset \mathbb{R}^3$ such that for any $1\leq p<\infty$, we have that
\begin{align*}
\mathbb{E}\,\sup_{t\in[0,T]}&\Bigg\vert \int_0^t \int_{\mathbb{T}^3_L} \sum_{k\in\mathbb{N}} \frac{ \vert g_k(\varrho_L,\varrho_L \mathbf{u}_L)\vert^2 }{2\varrho_L}\mathrm{d}x\,\mathrm{d}s   \Bigg\vert^p  
\\
&\leq  \mathbb{E}\, \Bigg(\int_0^T \int_{\mathbb{T}^3_L} \sum_{k\in\mathbb{N}}\frac{ \vert g_k(\varrho_L,\varrho_L \mathbf{u}_L)\vert^2 }{2\varrho_L}\mathrm{d}x\,\mathrm{d}s  \Bigg)^p
\\
&\leq c\, \mathbb{E}\, \Bigg(\int_0^T \int_{\mathcal{K}} \varrho_L^{-1}\left( \varrho_L^2 +\vert\varrho_L\mathbf{u}_L\vert^2  \right) \mathrm{d}x\,\mathrm{d}s  \Bigg)^p
\\
&\leq  
c_p \,\mathbb{E}\,   \int_0^T\Bigg(  \int_{\mathcal{K}} (1  + \varrho_L^\gamma + \varrho_L  \vert \mathbf{u}_L \vert^2)\mathrm{d}x\Bigg)^p\mathrm{d}s
\end{align*}
where $c_p$ is independent of both $k$ and $L$ and where we have used $\varrho_L\leq 1 +\varrho_L^\gamma$.

Also, by the use of the Burkholder--Davis--Gundy inequality, H\"{o}lder inequality and  Young's inequality, we have that
\begin{align*}
\mathbb{E}\,&\Bigg[\sup_{t\in[0,T]}\Big\vert \int_0^t\int_{\mathbb{T}^3_L} \mathbf{u}_L\cdot\Phi(\varrho_L,\mathbf{q}_L)\mathrm{d}x\mathrm{d}W \Big\vert\Bigg]^p  \\
&=  \mathbb{E}\,\Bigg[ \sup_{t\in[0,T]}\Big\vert \int_0^t \sum_{k\in\mathbb{N}} \int_{\mathbb{T}^3_L} \mathbf{u}_L\cdot g_k(\varrho_L,\mathbf{q}_L)\mathrm{d}x\mathrm{d}\beta_k \Big\vert\Bigg]^p
\\
&\leq c_p\,  \mathbb{E}\,\Bigg[\int_0^T   \sum_{k\in\mathbb{N}}\Bigg(\int_{\mathbb{T}^3_L} \mathbf{u}_L \cdot g_k(\varrho_L,\mathbf{q}_L)\mathrm{d}x \Bigg)^2\mathrm{d}s \Bigg]^{\frac{p}{2}}
\\
&\leq c_p\,
\mathbb{E}\,\Bigg[   \int_0^T  \sum_{k\in\mathbb{N}} \Bigg(\int_{\mathbb{T}^3_L}\vert\sqrt{\varrho_L} \mathbf{u}_L\vert^2\mathrm{d}x\Bigg)\Bigg(  \int_{\mathbb{T}^3_L} \Big\vert\frac{g_k(\varrho_L,\mathbf{q}_L)}{\sqrt{\varrho_L}}\Big\vert^2\mathrm{d}x\Bigg)\mathrm{d}s \Bigg]^{\frac{p}{2}}
\\
&\leq \epsilon \,
\mathbb{E}\,   \Bigg(\sup_{t\in[0,T]} \int_{\mathbb{T}^3_L}\vert\sqrt{\varrho_L} \mathbf{u}_L\vert^2\mathrm{d}x\Bigg) ^p + c_{p,\epsilon}\,  \mathbb{E}\,   \int_0^T\Bigg(  \int_{\mathcal{K}} (1  + \varrho_L^\gamma + \varrho_L  \vert \mathbf{u}_L \vert^2)\mathrm{d}x\Bigg)^p\mathrm{d}s 
\end{align*}
for an arbitrarily small $\epsilon>0$.

By taking the $p{\mathrm{th}}$-moment of the supremum  in \eqref{dissipation1} and applying Gronwall's lemma, we obtain the inequality
\begin{equation}
\begin{aligned}
\label{18}
\mathbb{E}\Bigg[ \sup_{t\in[0,T]} &\int_{\mathbb{T}^3_L}\Bigg( \frac{\varrho_L\vert\mathbf{u}_L\vert^2}{2} +  H(\varrho_L) \Bigg)\mathrm{d}x  \Bigg]^p  + \mathbb{E}\Bigg[ \int_0^T \int_{\mathbb{T}^3_L}\mathbb{S}(\nabla\mathbf{u}_L):\nabla \mathbf{u}_L\mathrm{d}x\mathrm{d}s  \Bigg]^p
\\
&\leq c_{p,\epsilon,\mathrm{vol}(\mathcal{K})} \,\left( 1  + \mathbb{E}\,\left[\int_{\mathbb{T}^3_L}\left[\frac{\vert \mathbf{q}_{L,0} \vert^2}{2\varrho_{L,0}} + H(\varrho_L(0,\cdot)) \right]\mathrm{d}x \right]^p\right) 
\end{aligned}
\end{equation}
where $c_{p,\epsilon,\mathrm{vol}(\mathcal{K})}$ is in particular, independent of $L$. Now by the assumptions on $\Lambda$, the right hand side of \eqref{18} is finite.  As such, we obtain the following uniform bounds in $L$
\begin{equation}
\begin{aligned}
\label{est0}
\sqrt{\varrho_L}\mathbf{u}_L &\in L^p\left(\Omega; L^\infty(0,T;L^2(\mathbb{T}^3_L)) \right),
\\
\nabla \mathbf{u}_L &\in L^p\left(\Omega;L^2(0,T;L^{2}(\mathbb{T}^3_L)) \right),
\\
H(\varrho_L) &\in L^p \left(\Omega; L^\infty(0,T;L^{1}(\mathbb{T}^3_L)) \right),
\\
(\varrho_L-\overline{\varrho}) &\in L^p \left(\Omega; L^\infty(0,T;L^{\min\{2,\gamma\}}(\mathbb{T}^3_L)) \right).
\end{aligned}
\end{equation}
Note that the estimates in \eqref{est0} are global but unfortunately, do not include all necessary quantities. In the following, we derive local estimates with respect to balls $B_r$ which will depend on the radius $r>0$. A consequence of \eqref{est0}$_3$ is
\begin{align}\label{eq:74}
\varrho_L &\in L^p \left(\Omega; L^\infty(0,T;L^\gamma(B_r)) \right)
\end{align}
uniformly in $L$ (but depending on $r$). If $B_r\subset \mathbb{T}^3_L$, this follows in an obvious way from the definition of $H$. Otherwise we cover
$B_r\subset\mathbb R^3$ by tori to which $\varrho_L$ is extended by means of periodicity. The number of necessary tori depends on $r$ but is independent of $L$. To see this, we notice that since 
$\mathrm{vol}(B_r)\approx c(\pi)r^3$ and $\mathrm{vol}(\mathbb{T}^3_L)\approx c(\pi)L^3$, we will require $\scriptstyle\mathcal{O} \left(\frac{r^3}{L^3}\right)$ number of tori to cover $B_r$. But since $L\geq 1$,  we infact require $\scriptstyle\mathcal{O}(r^3)$ (which is independent of $L$) number of such tori to cover $B_r$.
\begin{remark}
We get \eqref{eq:74} be making it the subject in \eqref{pressurePotential} and using \eqref{est0}$_{3,4}$. However, we only obtain the estimate locally in space because of the constant term $\overline{\varrho}$ in the pressure potential \eqref{pressurePotential}. This will blow up with the size of the torus if we try obtaining a global estimate.
\end{remark}
We observe that non of the bounds in \eqref{est0} directly controls the amplitude of $\mathbf{u}_L$. 
However using the Sobolev-Poincar\'e's inequality and $\gamma>\frac{3}{2}$, the following holds
\begin{align*}
\| \varrho_0 &\|_{L^1(B_r)}  \left| (\mathbf{u}_L)_{B_r} \right| = \left|\, \int_{B_r} \varrho \,  (\mathbf{u}_L)_{B_r}  \mathrm{d}x \right| \\
&\leq\,c
\int_{B_r} \varrho \left| (\mathbf{u}_L)_{B_r}  - \mathbf{u}_L \right| \mathrm{d}x + \int_{B_r} \varrho_L |\mathbf{u}_L|  \mathrm{d}x\\
 &\leq\,c \| \varrho_L \|_{L^\gamma(B_r)}
\left\| (\mathbf{u}_L)_{B_r}  - \mathbf{u}_L \right\|_{L^{\gamma'}(B_r)} +c \| \sqrt{\varrho_L} \|_{L^2(B_r)} \| \sqrt{\varrho_L} \mathbf{u}_L \|_{L^2(B_r)}\\
 &\leq\,c(r) \| \varrho_L \|_{L^\gamma(B_r)}
\left\| (\mathbf{u}_L)_{B_r}  - \mathbf{u}_L \right\|_{L^{6}(B_r)} +c \| \sqrt{\varrho_L} \|_{L^{2\gamma}(B_r)} \| \sqrt{\varrho_L} \mathbf{u}_L \|_{L^2(B_r)}\\
&\leq c(r)\| \varrho_L \|_{L^\gamma(B_r)} \| \nabla \mathbf{u}_L \|_{L^2(B_r)} +c \| \varrho_L \|^\frac{1}{2}_{L^\gamma(B_r)}  + c\left\| \varrho_L |\mathbf{u}_L|^2 \right\|_{L^1(B_r)},
\end{align*}
and, consequently,
\begin{align}\label{eq:24.04}
\begin{aligned}
\| \varrho_0 \|_{L^1(B_r)}^2  \int_0^\tau \left| (\mathbf{u}_L)_{B_r} \right|^2 \mathrm{d}t &\leq\,c(r)
\sup_{t \in [0,\tau]} \left\| \varrho_L \right\|_{L^\gamma(B_r)}^2 \int_0^\tau \| \nabla \mathbf{u}_L \|^2_{L^2(B_r)}\mathrm{d}t \\ &+c \tau \sup_{t \in (0,\tau)}
\left( \| \varrho_L \|_{L^\gamma(B_r)}  + \left\| \varrho_L |\mathbf{u}_L|^2 \right\|^2_{L^1(B_r)} \right).
\end{aligned}
\end{align}
In view of the bounds established in \eqref{est0}, \eqref{eq:74} and the assumptions on the initial law,
we can conclude that 
\begin{align}
\label{homogVelo}
 \mathbf{u}_L &\in L^p(\Omega;L^2(0,T;W^{1,2}(B_r)) ).
\end{align}
uniformly in $L$.

Furthermore, for $r>0$, we can use  the (uniform in $L$ but not in $r$) continuous embedding $W^{1,2}(B_r)\hookrightarrow L^6(B_r)$ and H\"{o}lder's inequality, to get for $\mathrm{d}\mathbb{P}\times \mathrm{d}t$ a.e. $(\omega,t)\in \Omega\times[0,T]$,
\begin{align*}
\Vert \varrho_L \mathbf{u}_L \Vert_{L^{\frac{2\gamma}{\gamma+1}}(B_r)}  
&\leq
\Vert \sqrt{\varrho_L} \Vert_{L^{2\gamma}(B_r)} \Vert \sqrt{\varrho_L} \mathbf{u}_L \Vert_{L^{2}(B_r)}
\\
&=  \Vert \varrho_L \Vert^\frac{1}{2}_{L^{\gamma}(B_r)}\Vert \sqrt{\varrho_L} \mathbf{u}_L \Vert_{L^{2}(B_r)},
\\
\Vert \varrho_L \mathbf{u}_L\otimes  \mathbf{u}_L \Vert_{L^{\frac{6\gamma}{4\gamma+3}}(B_r)}  
&\leq
 \Vert \varrho_L \mathbf{u}_L \Vert_{L^{\frac{2\gamma}{\gamma+1}}(B_r)}\Vert  \mathbf{u}_L\Vert_{L^{6}(B_r)}.
\end{align*}
Since the radius of the ball above is chosen arbitrarily,  we may conclude that

\begin{equation}
\begin{aligned}
\label{est00}
\varrho_L \mathbf{u}_L &\in L^p(\Omega; L^\infty(0,T;L^{\frac{2\gamma}{\gamma+1}}(B_r)) ),
\\
\varrho_L \mathbf{u}_L\otimes  \mathbf{u}_L &\in L^p(\Omega; L^2(0,T;L^{\frac{6\gamma}{4\gamma+3}}(B_r)) ),
\end{aligned}
\end{equation}
uniformly in $L$ for $r>0$ by using \eqref{est0}.

\subsection{Higher integrability of density}
For reasons that will be clear in the subsequent sections, it is essential to improve the regularity of density. We give this in the following lemma:
\begin{lemma}
\label{lem:higherDensity}
Let $B_r\subset \mathbb{R}^3$ be a ball of radius $r>0$. Then for all $\Theta\leq \frac{2}{3}\gamma-1$, we have that
\begin{align}
\mathbb{E}\int_0^T\int_{B_r} a\varrho_L^{\gamma+\Theta}\,\mathrm{d}x\,\mathrm{d}t\leq c
\end{align}
where the constant $c$, is independent of $L$ {(but depends on $r$)}.
\end{lemma}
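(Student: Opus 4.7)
The plan is to follow the classical Lions--Feireisl multiplier argument, as executed in the stochastic periodic setting in \cite{Hof,breit2015compressible}, adapted to the present localized framework. Fix $r>0$ and a cutoff $\psi\in C_c^\infty(\mathbb{R}^3)$ with $\psi\equiv 1$ on $B_r$ and $\mathrm{supp}\,\psi\subset B_{2r}$. For $L\geq 2r$ I would introduce the $(\mathscr{F}_t)$-adapted test function
\begin{equation*}
\phi(t,x):=\psi(x)\,\nabla\Delta^{-1}\bigl[\psi\,\varrho_L^\Theta(t,\cdot)\bigr](x),
\end{equation*}
so that $\mathrm{div}\,\phi=\psi^2\varrho_L^\Theta+\nabla\psi\cdot\nabla\Delta^{-1}[\psi\varrho_L^\Theta]$, and, by Calder\'{o}n--Zygmund, $\|\nabla\phi\|_{L^q}$ is controlled by $\|\psi\varrho_L^\Theta\|_{L^q}$ with constants independent of $L$. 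For the remaining finite range $L<2r$ I would cover $B_r$ by $\mathcal{O}(r^3)$ periodic translates of a fundamental cell, as in the paragraph preceding \eqref{eq:74}, and treat each cell separately.

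The central identity then comes from applying It\^{o}'s formula to the real-valued semimartingale $t\mapsto\int \varrho_L\mathbf{u}_L\cdot\phi\,dx$. Since $\phi$ is built only from $\varrho_L$ and \eqref{comprSPDE0}$_1$ is noise-free, $\phi$ has zero quadratic variation and the only martingale contribution comes from the momentum equation. Expressing $\partial_t\phi$ through the renormalized continuity equation \eqref{renormalizedCont} applied with $b(s)=s^\Theta$ and rearranging isolates the term $\mathrm{Ma}^{-2}\mathbb{E}\int_0^T\!\!\int\psi^2\varrho_L^{\gamma+\Theta}\,dx\,dt$ on the left-hand side, balanced on the right by boundary terms $[\int\varrho_L\mathbf{u}_L\cdot\phi\,dx]_0^T$, the viscous term $\int\mathbb{S}(\nabla\mathbf{u}_L):\nabla\phi$, the commutator term $\int\varrho_L^\gamma\nabla\psi\cdot\nabla\Delta^{-1}[\psi\varrho_L^\Theta]$, the $\partial_t\phi$ contribution $\int\varrho_L\mathbf{u}_L\cdot\partial_t\phi$, the convective term $\int\varrho_L\mathbf{u}_L\otimes\mathbf{u}_L:\nabla\phi$, and the stochastic integral $\int\phi\cdot\Phi\,dW$. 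All but the convective term are bounded using Calder\'{o}n--Zygmund regularity together with the a priori estimates \eqref{est0}, \eqref{eq:74}, \eqref{est00} and \eqref{homogVelo}; the stochastic term vanishes in expectation and, when one passes to $L^p$-type norms, is controlled by the Burkholder--Davis--Gundy inequality together with \eqref{noiseSupport}--\eqref{stochCoeffBound}.

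The main obstacle is the convective contribution. Writing $\varrho_L\mathbf{u}_L\otimes\mathbf{u}_L\in L^2_tL^{6\gamma/(4\gamma+3)}_x$ from \eqref{est00} and pairing by H\"{o}lder's inequality with $\nabla\phi\in L^{6\gamma/(2\gamma-3)}_x$, the Calder\'{o}n--Zygmund bound controls the latter by $\|\varrho_L^\Theta\|_{L^{6\gamma/(2\gamma-3)}(B_{2r})}$. Interpolating this exponent between $\varrho_L\in L^\infty_t L^\gamma_x$ from \eqref{eq:74} and the quantity $\|\varrho_L\|_{L^{\gamma+\Theta}_{t,x}}$ one is trying to estimate, and then invoking Young's inequality, permits absorption into the left-hand side precisely when $\Theta\leq\tfrac{2}{3}\gamma-1$. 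This exponent matching is where the hypothesis on $\Theta$ enters the argument and constitutes the main piece of bookkeeping.
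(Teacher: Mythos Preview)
Your approach is correct and complete in outline, but it differs from the paper's proof in the choice of inverse divergence operator. The paper works on the bounded set $B^3_{r,L}:=B_r\cap\mathbb{T}^3_L$ and uses the Bogovski\u{\i} operator $\mathbb{B}(\varrho_L^\Theta)=\mathcal{B}\bigl[\varrho_L^\Theta-\fint\varrho_L^\Theta\,dx\bigr]$ from \cite{diening2010decomposition}; the key observation there is that $B^3_{r,L}$ is a John domain whose chain constants are independent of $L$, so the Bogovski\u{\i} continuity bound \eqref{BogovEst2} is uniform in $L$. The identity \eqref{estimates} is then obtained and all terms $J_i$ with $i\neq 8$ are estimated by direct reference to \cite[Propositions~5.1,~6.1]{Hof}, without spelling out the convective bookkeeping you describe. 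Your route instead uses the global Riesz potential $\nabla\Delta^{-1}$ together with a fixed cutoff $\psi$, which buys you $L$-independence immediately from standard Calder\'on--Zygmund theory on $\mathbb{R}^3$ (no John-domain machinery needed) at the cost of carrying the extra commutator terms in $\nabla\psi$; these are indeed lower order and harmless. Your explicit exponent matching for the convective term, explaining why $\Theta\le\tfrac{2}{3}\gamma-1$ is sharp for the absorption argument, is more informative than the paper's bare citation. Interestingly, the paper itself switches to exactly your cutoff-plus-$\nabla\Delta^{-1}$ scheme later in the effective viscous flux analysis \eqref{rieszTrans0}--\eqref{rieszTrans1}, so both operators are in play in the manuscript.
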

\begin{proof} 
If we set $B^3_{r,L}:=B_r\cap \mathbb{T}_L^3$, then it is enough to prove that
\begin{align}
\mathbb{E}\int_0^T\int_{B^3_{r,L}} a\varrho_L^{\gamma+\Theta}\,\mathrm{d}x\,\mathrm{d}t\leq c
\end{align}
independently of $L$. The general case then follows by covering $B_r$ by sets of the form $B\cap \mathbb{T}_L^3$ for a ball $B$.
First notice that by combining \eqref{homogeneousPoincare} with the continuity property of the  Bogovski\u{\i} operator
$\mathbb{B}(\varrho_L^\Theta)=\mathcal{B}\left[\varrho_L^\Theta - \fint \varrho_L^\Theta\,\mathrm{d}x \right]$, where $\mathcal{B}=\mathcal{B}_{B^3_{r,L}}$ is as defined in  \cite[Theorem 5.2]{diening2010decomposition} for the set $B^3_{r,L}$, we ensures that
\begin{align}
\label{BogovEst2}
\Vert \mathbb{B}(\varrho_L^\Theta)\Vert_{L^\frac{3q}{3-q}(B^3_{r,L})}\leq c \Vert \varrho_L^\Theta \Vert_{L^q(B^3_{r,L})}, \quad r>0
\end{align}
holds uniformly in $L$ for $1\leq q<3$.
\begin{remark}
Note that infact the set $B^3_{r,L}$ is a bounded John domain and hence satisfies the emanating chain condition with some constants $\sigma_1$ and $\sigma_2$ which are independent of the size of the torus. The fact that the constant $c$ in \eqref{BogovEst2} is independent of $L$ therefore follows from the fact that the constant $c$ in  \cite[Theorem 5.2]{diening2010decomposition} only depends on $\sigma_1$, $\sigma_2$ and $q$ as well as the fact that $c_{q}$ is independent of $L$.
\end{remark}

The idea now is to test the momentum equation with  $\mathbb{B}(\varrho^\Theta)$. To do this however, we first replace the map $\varrho \,\mapsto\, \varrho^\Theta$ with the function $b(\varrho)\in C_c^1(\mathbb{R})$ and apply It\'{o} formula to the function $f(b ,\mathbf{q}) = \int_{B^3_{r,L}}\mathbf{q}\cdot \mathbb{B}(b(\varrho))\,\mathrm{d}x$ where  $\mathbb{B}(b(\varrho))=\mathcal{B}\left[b(\varrho) - \fint b(\varrho)\,\mathrm{d}x \right]$. Since $f$ is linear in $\mathbf{q}$, no second-order derivative in this component exits. Also, the quadratic variance of $b(\varrho)$ is zero since the renormalized continuity equation  is deterministic.

Now, notice that the  Bogovski\u{\i} operator commutes with the time derivative (but not with the spatial derivative) and since the continuity equation is satisfied in the renormalized sense, we have that
\begin{align*}
\mathrm{d}\left[\mathbb{B}\left(b(\varrho_L)\right)\right]=
\mathbb{B}\left[\mathrm{d}\left(b(\varrho_L)\right)\right]
=
-\mathbb{B} \left[ \mathrm{div}( b(\varrho_L) \mathbf{u}_L)
 -
\left(b'(\varrho_L)\,\varrho_L  -  b(\varrho_L) \right) \mathrm{div}\,\mathbf{u}_L\right] \mathrm{d}t. 
\end{align*}
As such for $b_L:=b(\varrho_L) $, the following holds in expectation:
\begin{align*}
\int_0^t &f_{b_L}(b_L,\mathbf{q}_L) \,\mathrm{d}b_L = \iint\mathbf{q}_L\cdot\partial_{b_L}(\mathbb{B}(b_L))\,\mathrm{d}b_L\,\mathrm{d}x  = \iint\mathbf{q}_L\cdot\,\mathrm{d}\left[\mathbb{B}\left(b_L\right)\right]\,\mathrm{d}x.
\nonumber\\
&= - \iint\mathbf{q}_L\cdot\,  \mathbb{B} \left[ \mathrm{div}( b_L \mathbf{u}_L)\right]\mathrm{d}x\,\mathrm{d}s  
-  \iint\mathbf{q}_L\cdot\, \mathbb{B}\left[\left(\varrho_L\,b'_L  -  b_L \right) \mathrm{div}\,\mathbf{u}_L\right] \mathrm{d}x\,\mathrm{d}s  
\\
\int_0^t  &f_{\mathbf{q_L}}(b_L  ,\mathbf{q}_L) \,\mathrm{d}\mathbf{q}_L  =\iint\mathbb{B}(b_L)\,\mathrm{d}\mathbf{q}_L\,\mathrm{d}x
 \nonumber\\
&=\iint\mathbb{B}(b_L)\,\left[ -\mathrm{div}(\varrho_L\mathbf{u}_L\otimes \mathbf{u}_L) + \nu\Delta\mathbf{u}_L +(\lambda+ \nu)\nabla\mathrm{div}\mathbf{u}_L - a\nabla\varrho^\gamma_L    \right]\,\mathrm{d}x\,\mathrm{d}s 
 \nonumber\\
&+\iint\mathbb{B}(b_L)\Phi(\varrho_L,\varrho_L\mathbf{u}_L)\,\mathrm{d}W\,\mathrm{d}x
\\
&=\iint \Big[ (\varrho_L\mathbf{u}_L\otimes \mathbf{u}_L)\nabla\mathbb{B}(b_L)\mathrm{d}x\mathrm{d}s  - \nu\nabla\mathbf{u}_L:\nabla \mathbb{B}(b_L)
 -(\lambda+ \nu) b_L\,\mathrm{div}\mathbf{u}_L\Big] \mathrm{d}x\mathrm{d}s   
 \nonumber\\
&+ \iint a\varrho_L^\gamma b_L\,\mathrm{d}x\,\mathrm{d}s  +\iint\mathbb{B}(b_L)\Phi(\varrho_L,\varrho_L\mathbf{u}_L)\,\mathrm{d}W\,\mathrm{d}x \label{inverseOgov}
\\
\int_0^t  &f_{b_Lb_L}(b_L,\mathbf{q}_L) \,\mathrm{d}\langle b_L\rangle=\int_0^t  f_{\mathbf{q}_L\mathbf{q}_L}(b_L,\mathbf{q}_L) \,\mathrm{d}\langle\mathbf{q}_L\rangle  =0 \quad\text{since } \mathrm{d}\langle b_L\rangle  =f_{\mathbf{q}_L\mathbf{q}_L}=0  
\end{align*}
where we have integrated by parts and used the fact that $\mathbb{B}(f)$ solves the equation $\mathrm{div}\,\mathbf{v}=f$. 
It therefore follows that 
\begin{equation}
\begin{aligned}
\label{estimates}
&\mathbb{E}  \int_{B^3_{r,L}}\mathbf{q}_L\cdot \mathbb{B}\left(b_L\right)\, \mathrm{d}x  =  \mathbb{E} \int_{B^3_{r,L}}\mathbf{q}_L(0)\cdot \mathbb{B}\left[b_L(0)\right]\, \mathrm{d}x 
\\
&- \mathbb{E} \int_0^t \int_{B^3_{r,L}}  \mathbf{q}_L\cdot\, \mathbb{B} \left[ \mathrm{div}( b_L\, \mathbf{u}_L)\right] \mathrm{d}x\,\mathrm{d}s
\\
&-   \mathbb{E} \int_0^t \int_{B^3_{r,L}}  \mathbf{q}_L\cdot \mathbb{B}\left[\varrho_L b'_L\, \mathrm{div}\mathbf{u}_L\right]  \mathrm{d}x\mathrm{d}s   +  \mathbb{E} \int_0^t \int_{B^3_{r,L}}  \mathbf{q}_L\cdot\,\mathbb{B}\left[b_L  \mathrm{div}\,\mathbf{u}_L\right]\mathrm{d}x\mathrm{d}s
\\
&+ \mathbb{E} \int_0^t \int_{B^3_{r,L}}(\varrho_L\mathbf{u}_L\otimes \mathbf{u}_L)\nabla \mathbb{B}(b_L)\,\mathrm{d}x\mathrm{d}s  -  \mathbb{E} \int_0^t \int_{B^3_{r,L}} \nu\nabla\mathbf{u}_L:\nabla\mathbb{B}(b_L)\,\mathrm{d}x\mathrm{d}s 
 \\
& - \mathbb{E}\, \int_0^t \int_{B^3_{r,L}} (\lambda+ \nu)b_L\, \mathrm{div}\mathbf{u}_L\,\mathrm{d}x\,\mathrm{d}s   +  \mathbb{E}\, \int_0^t \int_{B^3_{r,L}} a\varrho_L^{\gamma}b_L \,\mathrm{d}x\,\mathrm{d}s  
 \\
&+ \mathbb{E}\, \int_0^t \int_{B^3_{r,L}} \mathbb{B}(b_L)\Phi(\varrho_L,\varrho_L\mathbf{u}_L)\,\mathrm{d}W\,\mathrm{d}x
\, =: \,  \mathbb{E}\, \sum_{i=1}^{9}J_i.
\end{aligned}
\end{equation}

To improve the regularity of $\varrho$, we aim at estimating $J_8$ in terms of the rest. To do this, we first set  the left-hand side of \eqref{estimates} to  $\mathbb{E}\,J_0$. Then using  \eqref{homogeneousPoincare}, \eqref{est0}, \eqref{homogVelo}, \eqref{est00} and heavy reliance on H\"{o}lder inequalities, we can show just as in \cite[Propositions 5.1, 6.1]{Hof} for $\delta=0$ and noting that $\Delta^{-1}\nabla$ and $\mathcal{B}$ enjoys the same continuity properties;
\begin{align*}
\mathbb{E}\, J_i\leq c, \quad \text{for all}\quad i\in \{0,1,\ldots, 9\}\setminus \{8\}
\end{align*}
for some constants $c=c_{\Theta,\gamma}$ which are in particular, independent of $L$.

\begin{remark}
In estimating  $J_2$, we use instead, the Bogovski\u{\i} operator in negative spaces which can be found in \cite[Proposition 2.1]{geissert2006equation}, \cite{borchers1990equations} or \cite{farwig1994generalized}. Also, note the comment just after \cite[Remark 2.2]{geissert2006equation} about carrying over the properties of the Bogovski\u{\i} operator from a star shaped domain onto more common domains treated in the analysis of PDE's.
\end{remark}

The result follows by making $\mathbb{E}\,J_8$ the subject and estimating it from above by the estimates given by the rest.
\end{proof}

\subsection{Compactness}
We now show that not only are our earlier estimates bounded uniformly on the torus $\mathbb{T}^3_L$ but due to the fact that each constants obtained are uniform in $L$, they are indeed bounded locally on the whole space $\mathbb{R}^3$. We then proceed to show the usual compactness arguments.

\begin{lemma}
\label{locLemma}
For any $L\geq1$, we have that 
\begin{align*}
\mathbf{u}_L &\in L^p(\Omega;L^2(0,T;W^{1,2}_{\mathrm{loc}}(\mathbb{R}^3)) ),
&
\sqrt{\varrho_L}\mathbf{u}_L &\in L^p\left(\Omega; L^\infty(0,T;L^2_{\mathrm{loc}}(\mathbb{R}^3)) \right),
\\
\varrho_L &\in L^p\left(\Omega;L^\infty(0,T;L^\gamma_{\mathrm{loc}}(\mathbb{R}^3)) \right),
&
\varrho_L\mathbf{u}_L &\in L^p(\Omega;L^\infty(0,T;L^{\frac{2\gamma}{\gamma+1}}_{\mathrm{loc}}(\mathbb{R}^3)) ),
\\
\varrho_L \mathbf{u}_L\otimes  \mathbf{u}_L &\in L^p(\Omega; L^2(0,T;L^{\frac{6\gamma}{4\gamma+3}}_{\mathrm{loc}}(\mathbb{R}^3)) ),
&
\varrho_L &\in L^p(\Omega;L^{\gamma+\Theta}(0,T;L^{\gamma+\Theta}_{\mathrm{loc}}(\mathbb{R}^3)) ).
\end{align*}
uniformly in $L$.
\end{lemma}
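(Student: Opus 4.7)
The plan is to assemble each of the stated bounds from the uniform-in-$L$ estimates already obtained on the torus, namely \eqref{est0}, \eqref{eq:74}, \eqref{homogVelo}, \eqref{est00}, and Lemma \ref{lem:higherDensity}, by a covering argument that transports torus estimates to compact subsets of $\mathbb{R}^3$.

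First I would identify $\mathbb{T}^3_L$ with the fundamental cell $[-L,L]^3\subset\mathbb{R}^3$ and extend $(\varrho_L,\mathbf{u}_L)$ periodically to all of $\mathbb{R}^3$; this extension is translation invariant and preserves every spatial $L^p$-type norm cell-by-cell. Given $r>0$ and $B_r\subset\mathbb{R}^3$, one covers $B_r$ by translates of $\mathbb{T}^3_L$: since $\mathrm{vol}(B_r)\approx cr^3$ and $\mathrm{vol}(\mathbb{T}^3_L)\approx cL^3$ with $L\geq 1$, the number of translates required is at most $\mathcal{O}(r^3)$, independent of $L$, exactly as already used in the derivation of \eqref{eq:74}. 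On each translated cell I invoke the corresponding uniform-in-$L$ estimate and sum the finitely many such contributions.

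Explicitly, the bound on $\mathbf{u}_L$ in $L^2(0,T;W^{1,2}(B_r))$ is already \eqref{homogVelo}; the bound on $\sqrt{\varrho_L}\mathbf{u}_L$ in $L^\infty(0,T;L^2(B_r))$ follows from \eqref{est0}$_1$ via the covering; the bound on $\varrho_L$ in $L^\infty(0,T;L^\gamma(B_r))$ is \eqref{eq:74}; the bounds on $\varrho_L\mathbf{u}_L$ in $L^\infty(0,T;L^{\frac{2\gamma}{\gamma+1}}(B_r))$ and on $\varrho_L\mathbf{u}_L\otimes\mathbf{u}_L$ in $L^2(0,T;L^{\frac{6\gamma}{4\gamma+3}}(B_r))$ are contained in \eqref{est00}; and the higher-integrability statement $\varrho_L\in L^{\gamma+\Theta}(\Omega\times(0,T)\times B_r)$ is exactly Lemma \ref{lem:higherDensity}. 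Taking the $p$-th moment in $\Omega$ at every step is legitimate because each of the torus-level bounds is already phrased in $L^p(\Omega;\,\cdot\,)$.

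The only point requiring genuine attention is the density: one cannot hope for a global bound of $\varrho_L$ in $L^\gamma(\mathbb{R}^3)$, because the pressure potential $H$ controls only the fluctuation $\varrho_L-\overline{\varrho}$, and the constant $\overline{\varrho}$ contributes $\overline{\varrho}^\gamma\,\mathrm{vol}(B_r)$ per ball, which diverges if one attempts a global estimate. This is precisely why every bound is stated locally in space. Once $r$ is fixed, the covering absorbs this contribution into a prefactor depending on $r$ alone, and combined with the finitely many translates, yields constants independent of $L$, as asserted.
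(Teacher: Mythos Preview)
Your proposal is correct and follows essentially the same route as the paper: periodic extension, covering $B_r$ by $\mathcal{O}(r^3)$ translates of the fundamental cell (independently of $L$), and invoking the previously established torus bounds \eqref{est0}, \eqref{eq:74}, \eqref{homogVelo}, \eqref{est00}, and Lemma~\ref{lem:higherDensity}. Your closing observation about why the density bound must remain local coincides with the paper's own remark following \eqref{eq:74}.
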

\begin{proof}
We will only show the first uniform estimate as the rest can be done in a similar manner in conjunction with \eqref{est0}, \eqref{est00} and Lemma \ref{lem:higherDensity}.

Let $L,r\in\mathbb{N}$ and let $B_r\subset\mathbb{R}^3$ be the ball of radius $r$ centered at the origin. If
$B_r\subset\mathbb{T}^3_L$, then we notice that we can directly deduce from \eqref{est0}$_2$ that
\begin{align}
\label{locVel0}
\mathbf{u}_L \in L^p\left(\Omega;L^2(0,T;W^{1,2}(B_r)) \right)
\end{align}
uniformly in $L$. Otherwise, we can use the same argument as in the justification of \eqref{eq:74} above to get from \eqref{est0}$_2$,
\begin{align}
\label{locVel}
\Vert \mathbf{u}_L\Vert_{L^p(\Omega;L^2(0,T;W^{1,2}(B_r)))}  
\leq c(p,r),\quad\forall r\in\mathbb{N}
\end{align}
uniformly in $L$. That is, for any $r\in\mathbb{N}$ and any $B_r\subset\mathbb{R}^3$, \eqref{locVel} holds. By combining \eqref{locVel0} and \eqref{locVel}, we can deduce that
\begin{align}
\label{locVel1}
\mathbf{u}_L \in L^p\left(\Omega;L^2(0,T;W^{1,2}_{\mathrm{loc}}(\mathbb{R}^3)) \right)
\end{align}
uniformly in $L$.
\end{proof}

For the compactness result, let define the following path space $\chi =  \chi_{\mathbf{u}}\times \chi_\varrho  \times \chi_{\varrho {\mathbf{u}}} \times \chi_W$ where
\begin{align*}
\chi_{\mathbf{u}} &= \left(L^2(0,T;W^{1,2}_{\mathrm{loc}}(\mathbb{R}^3)),\omega\right), \\
\chi_\varrho &= C_\omega\left([0,T];L^\gamma_{\mathrm{loc}}(\mathbb{R}^3)\right)\cap (L^{\gamma+\theta}(0,T;L^{\gamma+\theta}_{\mathrm{loc}}(\mathbb{R}^3)),\omega), \\
\chi_{\varrho {\mathbf{u}}} &= C_\omega\left([0,T];L^{\frac{2\gamma}{\gamma+1}}_{\mathrm{loc}}(\mathbb{R}^3)\right), \\
\chi_W &= C\left([0,T];\mathfrak{U}_0\right) ,
\end{align*}
and let
\begin{enumerate}
\item $\mu_{\mathbf{u}_L}$ be the law of $\mathbf{u}_L$ on $\chi_{{\mathbf{u}}}$,
\item $\mu_{\varrho_L}$ be the law of $\varrho_L$ on the space $\chi_{\varrho}$,
\item $\mu_{\varrho_L \mathbf{u}_L}$ be the law of $\varrho_L \mathbf{u}_L$ on the space $\chi_{\varrho {\mathbf{u}}}$,
\item $\mu_{W}$ be the law of $W$ on the space $\chi_{W}$,
\item $\mathrm{\mu}^L$ be the joint law of $\mathbf{u}_L,\, \varrho_L,\,\varrho_L \mathbf{u}_L$ and $W$ on the space $\chi$.
\end{enumerate}

\begin{proposition}
\label{compact AR}
For an arbitrary constant $c$, which is uniform in $r\in\mathbb{N}$, $L\geq1$ and $R>0$, let us define the set
\begin{align*}
A_R := \{\mathbf{u}_L \in L^2(0,T;W^{1,2}_{\mathrm{loc}}(\mathbb{R}^3) ) \, : \, \Vert \mathbf{u}_L \Vert_{ L^2(0,T;W^{1,2}(B_r) )} \leq {c(r)}R, \quad\forall r\in\mathbb{N}\}.
\end{align*}
Then $A_R$ is compact in $\chi_{\mathbf{u}}$
\end{proposition}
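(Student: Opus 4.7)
The plan is to verify that $A_R$ is weakly sequentially compact in $L^2(0,T;W^{1,2}_{\mathrm{loc}}(\mathbb{R}^3))$, which suffices because the weak topology on this Fréchet space, generated by the countable family of seminorms $\|\cdot\|_{L^2(0,T;W^{1,2}(B_r))}$ for $r\in\mathbb{N}$, is metrizable on bounded sets so that sequential compactness is equivalent to compactness.

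First, I take an arbitrary sequence $(\mathbf{u}_n)\subset A_R$. Fix $r=1$. By the definition of $A_R$, the sequence is bounded in the reflexive Hilbert space $L^2(0,T;W^{1,2}(B_1))$, so the Banach--Alaoglu theorem yields a subsequence, call it $(\mathbf{u}_n^{(1)})$, that converges weakly in $L^2(0,T;W^{1,2}(B_1))$ to some $\mathbf{u}^{(1)}$. Applying the same argument with $r=2$ to $(\mathbf{u}_n^{(1)})$, which is bounded in $L^2(0,T;W^{1,2}(B_2))$ by $c(2)R$, I extract a further subsequence $(\mathbf{u}_n^{(2)})$ converging weakly in $L^2(0,T;W^{1,2}(B_2))$ to some $\mathbf{u}^{(2)}$. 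Iterating this construction and forming the diagonal subsequence $\mathbf{u}_n^\ast := \mathbf{u}_n^{(n)}$, I obtain a single subsequence that converges weakly in $L^2(0,T;W^{1,2}(B_r))$ for every $r\in\mathbb{N}$.

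Next I observe that the limits $\mathbf{u}^{(r)}$ are compatible: since restriction from $B_{r+1}$ to $B_r$ is a continuous linear map, weak convergence in $L^2(0,T;W^{1,2}(B_{r+1}))$ implies weak convergence of the restrictions in $L^2(0,T;W^{1,2}(B_r))$, hence $\mathbf{u}^{(r+1)}|_{B_r}=\mathbf{u}^{(r)}$. Gluing the $\mathbf{u}^{(r)}$ together yields a single $\mathbf{u}\in L^2(0,T;W^{1,2}_{\mathrm{loc}}(\mathbb{R}^3))$ with $\mathbf{u}_n^\ast\rightharpoonup \mathbf{u}$ in the weak topology of $\chi_{\mathbf{u}}$. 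Finally, weak lower semicontinuity of the norm gives, for every $r\in\mathbb{N}$,
\begin{equation*}
\|\mathbf{u}\|_{L^2(0,T;W^{1,2}(B_r))} \leq \liminf_{n\to\infty}\|\mathbf{u}_n^\ast\|_{L^2(0,T;W^{1,2}(B_r))} \leq c(r)R,
\end{equation*}
so the limit $\mathbf{u}$ lies in $A_R$. Thus $A_R$ is sequentially closed under the extracted weak convergence, and sequential compactness is established.

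There is no essential obstacle here beyond setting up the diagonal extraction correctly and verifying that the weak limits on overlapping balls are consistent; the only delicate point worth emphasising is that the constant $c(r)$ in the seminorm bound is allowed to depend on $r$ (growing, say, like a power of $r$ coming from \eqref{eq:24.04} and the covering argument), which is precisely what allows $A_R$ to be a bona fide subset of $L^2(0,T;W^{1,2}_{\mathrm{loc}})$ rather than of a global space.
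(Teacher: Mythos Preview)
Your proof is correct and follows essentially the same approach as the paper: both extract, via a diagonal argument, a single subsequence that converges weakly in $L^2(0,T;W^{1,2}(B_r))$ for every $r\in\mathbb{N}$, and then invoke uniqueness of weak limits to glue the limits together. Your version is in fact more thorough than the paper's, which is quite terse---you explicitly justify why sequential compactness suffices (metrizability of the weak topology on bounded sets), verify compatibility of the limits under restriction, and check via lower semicontinuity that the limit remains in $A_R$, all of which the paper either leaves implicit or omits.
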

\begin{proof} 
To see this, fix $R>0$ and consider the subsequence $\{ \mathbf{u}_{n}\}_{n\in\mathbb{N}}\subset A_R$ so that
\begin{align*}
\Vert \mathbf{u}_{n} \Vert_{L^2(0,T;W^{1,2}(B_r))}\leq {c(r)}R,\quad \forall n\in\mathbb{N}\,\text{ and }\,\forall r\in\mathbb{N}
\end{align*}
Then by the use of a diagonal argument, we can construct the sequence  $\{\mathbf{u}^n_{n}\}_{n\in\mathbb{N}}\subset  \{\mathbf{u}_{n}\}_{n\in\mathbb{N}}$ that is a common subsequence of all the sequences $\{\mathbf{u}^m_{n}\}_{n\in\mathbb{N}}$ for all $m\in\{0\}\cup\mathbb{N}$ where $\mathbf{u}^0_{n}:=\mathbf{u}_{n}$.  And  by  uniqueness of limits, we can therefore conclude that
\begin{align*}
\mathbf{u}^n_{n} \rightharpoonup \mathbf{u}\quad\text{in}\quad L^2(0,T;W^{1,2}(B_r))\quad\text{for every}\quad r\in\mathbb{N}.
\end{align*}
This finishes the proof.
\end{proof}

\begin{proposition}
\label{tightnessU}
The family of measures $\{\mu^L;\, L\geq 1 \}$  is tight on $\chi$.
\end{proposition}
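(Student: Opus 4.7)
The plan is to establish tightness of each of the four marginals separately and then combine. The Wiener marginal $\mu_W$ is a single Gaussian measure on the Polish space $C([0,T];\mathfrak{U}_0)$, so it is automatically tight.

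For the velocity marginal $\mu_{\mathbf{u}_L}$, I would exploit the compact set $A_R$ introduced in Proposition \ref{compact AR}. The key is to choose the constants $c(r)$ in the definition of $A_R$ carefully: by Lemma \ref{locLemma}, for each $r \in \mathbb{N}$ and each $p \geq 1$ one has $\mathbb{E}\,\Vert \mathbf{u}_L\Vert_{L^2(0,T;W^{1,2}(B_r))}^p \leq C(r,p)$ uniformly in $L$. Choosing $c(r) := 2^{r/p} C(r,p)^{1/p}$ and applying Chebyshev's inequality together with a union bound over $r$ yields
\begin{align*}
\mu_{\mathbf{u}_L}(A_R^c) \leq \sum_{r \in \mathbb{N}} \mathbb{P}\bigl(\Vert \mathbf{u}_L\Vert_{L^2(0,T;W^{1,2}(B_r))} > c(r)R\bigr) \leq \sum_{r\in\mathbb{N}}\frac{1}{2^r R^p} = \frac{1}{R^p},
\end{align*}
which tends to $0$ as $R \to \infty$ uniformly in $L$.

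For the density marginal $\mu_{\varrho_L}$, tightness in $(L^{\gamma+\theta}(0,T;L^{\gamma+\theta}_{\mathrm{loc}}(\mathbb{R}^3)),\omega)$ follows by the same scheme as above, using the last bound of Lemma \ref{locLemma} and the fact that balls in a reflexive Banach space are weakly compact (Banach--Alaoglu) together with a local-in-$r$ argument. For the $C_\omega([0,T];L^\gamma_{\mathrm{loc}}(\mathbb{R}^3))$ component, I would use the standard criterion: combine uniform boundedness in $L^\infty(0,T;L^\gamma(B_r))$ with equicontinuity in a negative Sobolev norm $W^{-k,2}(B_r)$. The latter comes directly from the continuity equation, which is deterministic in time and gives
\begin{align*}
\Vert \varrho_L(t+h) - \varrho_L(t)\Vert_{W^{-1,q}(B_r)} \leq \int_t^{t+h} \Vert \varrho_L \mathbf{u}_L\Vert_{L^q(B_r)}\,\mathrm{d}s \lesssim h^{1/2},
\end{align*}
$\mathbb{P}$-a.s., using \eqref{est00}. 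An Arzel\`a--Ascoli / Jakubowski-type argument then gives tightness on $C_\omega([0,T];L^\gamma_{\mathrm{loc}})$ marginal.

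For the momentum marginal $\mu_{\varrho_L \mathbf{u}_L}$, the approach is analogous but the stochastic term makes it the main obstacle. One writes the momentum equation in the form $\varrho_L\mathbf{u}_L(t) = \varrho_L\mathbf{u}_L(0) + \int_0^t F_L\,\mathrm{d}s + \int_0^t \Phi(\varrho_L,\varrho_L\mathbf{u}_L)\,\mathrm{d}W$, where the drift terms $F_L$ are uniformly bounded in $L^2(0,T;W^{-1,q}(B_r))$ for some $q > 1$ by Lemma \ref{locLemma}. For the stochastic integral, the Burkholder--Davis--Gundy inequality together with \eqref{stochCoeffBound} and the embedding $L^1(\mathcal{K}) \hookrightarrow W^{-l,2}(\mathcal{K})$ (with $l > 3/2$) gives, for any $\alpha > 2$,
\begin{align*}
\mathbb{E}\,\Bigl\Vert \int_t^{t+h} \Phi(\varrho_L,\varrho_L\mathbf{u}_L)\,\mathrm{d}W \Bigr\Vert_{W^{-l,2}(B_r)}^\alpha \leq c\, h^{\alpha/2},
\end{align*}
uniformly in $L$. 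Combining drift and noise, one obtains uniform H\"older regularity of $\varrho_L\mathbf{u}_L$ in a negative Sobolev space. Interpolating this regularity against the uniform bound in $L^\infty(0,T;L^{2\gamma/(\gamma+1)}(B_r))$ from Lemma \ref{locLemma}, and applying the compactness criterion for $C_\omega([0,T];L^{2\gamma/(\gamma+1)}_{\mathrm{loc}}(\mathbb{R}^3))$, yields tightness of $\mu_{\varrho_L \mathbf{u}_L}$. The hardest step is precisely controlling the stochastic integral locally in space on the unbounded domain, but the compact-support condition \eqref{noiseSupport} makes the noise effectively localized to $\mathcal{K}$, so the same estimates used in the periodic setting carry through.

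Finally, tightness of the joint law $\mu^L$ follows from tightness of the four marginals since $\chi$ carries the product topology.
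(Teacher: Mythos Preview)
Your proposal is correct and follows essentially the same strategy as the paper: tightness of each marginal separately, then combination on the product space. For $\mu_{\mathbf{u}_L}$ the paper also invokes the compact set $A_R$ of Proposition~\ref{compact AR} together with Chebyshev, and for $\mu_{\varrho_L}$, $\mu_{\varrho_L\mathbf{u}_L}$ the paper simply refers to the compactness arguments of \cite[Sect.~6]{Hof}, which are exactly the equicontinuity-in-negative-Sobolev / Arzel\`a--Ascoli type arguments you sketch. Your handling of the complement $(A_R)^C$ via a genuine union bound with summable weights $c(r)=2^{r/p}C(r,p)^{1/p}$ is in fact cleaner than the paper's presentation, which asserts that $\mu_{\mathbf{u}_L}((A_R)^C)$ equals the probability for a \emph{single} $r$; your version makes the countable union explicit and the estimate rigorous.
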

\begin{proof}
We first show that $\{\mu_{\mathbf{u}_L}; \, L\geq 1 \}$ is tight on $\chi_{\mathbf{u}}$.
To do this, we let $R>0$, then by Proposition \ref{compact AR}, there exists a compact subset $A_R\subset \chi_{\mathbf{u}}$.
Now since 
\begin{align*}
(A_R)^C := \{\mathbf{u}_L \in L^2(0,T;W^{1,2}_{\mathrm{loc}}(\mathbb{R}^3) ) :  \Vert \mathbf{u}_L \Vert_{ L^2\left(0,T;W^{1,2}(B_r) \right)} > {c(r)}R, \,\,\text{for some } r\in\mathbb{N}\},
\end{align*}
for any measure $\mu_{\mathbf{u}_L} \in \{\mu_{\mathbf{u}_L}; \, L\geq 1 \}$, there exists a $r\in\mathbb{N}$ such that:
\begin{align*}
\mu_{\mathbf{u}_L}&\left((A_R)^C\right) = \mathbb{P}\left(\Vert \mathbf{u}_L \Vert_{ L^2\left(0,T;W^{1,2}(B_r) \right)} > {c(r)}R \right)  
\\
&< \frac{1}{{c(r)}R}  \mathbb{E}\left(\Vert \mathbf{u}_L \Vert_{L^2\left(0,T;W^{1,2}(B_r) \right)} \right)  
\leq \frac{1}{R} \rightarrow0.
\end{align*}
as $R\rightarrow\infty$, where we have used \eqref{locVel} in the last inequality. This implies that $\{\mu_{\mathbf{u}_L}; \, L\geq 1 \}$ is tight on $\chi_{\mathbf{u}}$.

By using a similar argument adapted to suit the compactness arguments in \cite[Sect. 6]{Hof} we can show that
$\{\mu_{\varrho_L}; \, L\geq 1\}$ and $\{\mu_{\varrho_L \mathbf{u}_L}; \, L\geq 1 \}$  are also tight on  $\chi_{\varrho}$ and $\chi_{\varrho u}$ respectively.  Furthermore, $\mu_W$ is tight since its a Radon measure on the Polish space $\chi_W$. This finishes the proof.
\end{proof}

From Proposition \ref{tightnessU}, we cannot immediately use Skorokhod representation theorem to deduce that $\{\mu^L\,; \, L\geq1 \}$ is relatively compact (i.e. Prokhorov theorem), since the path space $\chi$ is not metrizable. However, we may use instead the Jakubowski--Skorokhod representation theorem \cite{jakubowski1998short} that gives a similar result but for more general spaces including quasi-Polish spaces, the space in which these locally in space Sobolev functions live. Applying this yields the following result:

\begin{proposition}
\label{prop:Jakubow0}
There exists a subsequence $\mu^n:= \mu^{L_n}$ for $n\in\mathbb{N}$, a probability space $(\tilde{\Omega}, \tilde{\mathscr{F}}, \tilde{\mathbb{P}})$ with $\chi$-valued  random variables $(\tilde{\mathbf{u}}_n,  \tilde{\varrho}_n, \tilde{\mathbf{q}_n}, \tilde{W}_n)$, and their corresponding `limit' variables $( \tilde{\mathbf{u}},  \tilde{\varrho}, \tilde{\mathbf{q}},  \tilde{W})$ such that
\begin{itemize}
\item the law of $(\tilde{\mathbf{u}}_n, \tilde{\varrho}_n,  \tilde{\mathbf{q}}_n, \tilde{W}_n)$ is given by $\mu^n  =  \mathrm{Law}(\mathbf{u}_{L_n}, \varrho_{L_n},  \varrho_{L_n}\mathbf{u}_{L_n},  W)$, $n\in\mathbb{N}$,
\item the law of  $(\tilde{\mathbf{u}}, \tilde{\varrho},  \tilde{\mathbf{q}}, \tilde{W})$, denoted by $\mu =  \mathrm{Law}(\mathbf{u}, \varrho,  \varrho \mathbf{u}, W)$ is a Randon measure,
\item $(\tilde{\mathbf{u}}_n,  \tilde{\varrho}_n,  \tilde{\mathbf{q}}_n,  \tilde{W}_n)$ converges $\tilde{\mathbb{P}}-$a.s to $(\tilde{\mathbf{u}}, \tilde{\varrho},  \tilde{\mathbf{q}},  \tilde{W})$ in the topology of $\chi$.
\end{itemize}
\end{proposition}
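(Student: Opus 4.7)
The plan is to invoke the Jakubowski--Skorokhod representation theorem (see \cite{jakubowski1998short}), since ordinary Skorokhod fails because three of the four factors of $\chi$ are non-metrizable: the weak topology of a reflexive Banach space, the locally convex topology on $L^q(0,T;X_{\mathrm{loc}})$ built from the Fréchet topology of $X_{\mathrm{loc}}=\bigcap_{r} X(B_r)$, and the weak-in-time/strong-in-space space $C_w([0,T];L^\gamma_{\mathrm{loc}})$ are all only quasi-Polish. Jakubowski's theorem requires only that each factor admit a countable family of continuous real-valued functions that separates points (a so-called \emph{quasi-Polish space}).

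The first step is to verify this separability condition for each factor of $\chi=\chi_{\mathbf{u}}\times\chi_\varrho\times\chi_{\varrho\mathbf{u}}\times\chi_W$. For $\chi_{\mathbf{u}}=(L^2(0,T;W^{1,2}_{\mathrm{loc}}),w)$, I would fix a countable dense subset $\{\varphi_j\}\subset L^2(0,T;W^{-1,2})$ with each $\varphi_j$ supported in some $B_{r_j}$; then $\mathbf{u}\mapsto\langle\mathbf{u},\varphi_j\rangle$ gives a countable separating family by weak duality. For $\chi_\varrho$, the space $C_w([0,T];L^\gamma_{\mathrm{loc}})$ is separated by the countable family $t\mapsto\langle\varrho(t),\psi_j\rangle$ evaluated at rational times for $\psi_j\in C^\infty_c(\mathbb{R}^3)$ dense in the relevant dual; the component $(L^{\gamma+\Theta}(0,T;L^{\gamma+\Theta}_{\mathrm{loc}}),w)$ is handled as for $\chi_{\mathbf{u}}$. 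The factor $\chi_{\varrho\mathbf{u}}$ is treated identically, and $\chi_W=C([0,T];\mathfrak{U}_0)$ is Polish.

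Once the quasi-Polish structure is established, tightness from Proposition \ref{tightnessU} delivers, after passing to a subsequence $L_n\to\infty$, a new probability space $(\tilde\Omega,\tilde{\mathscr{F}},\tilde{\mathbb{P}})$ and $\chi$-valued random variables $(\tilde{\mathbf{u}}_n,\tilde\varrho_n,\tilde{\mathbf{q}}_n,\tilde W_n)$ and $(\tilde{\mathbf{u}},\tilde\varrho,\tilde{\mathbf{q}},\tilde W)$ such that $\mathrm{Law}(\tilde{\mathbf{u}}_n,\tilde\varrho_n,\tilde{\mathbf{q}}_n,\tilde W_n)=\mu^n$ and $(\tilde{\mathbf{u}}_n,\tilde\varrho_n,\tilde{\mathbf{q}}_n,\tilde W_n)\to(\tilde{\mathbf{u}},\tilde\varrho,\tilde{\mathbf{q}},\tilde W)$ $\tilde{\mathbb{P}}$-a.s.\ in the topology of $\chi$. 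The identification of the joint limit law $\mu=\mathrm{Law}(\mathbf{u},\varrho,\varrho\mathbf{u},W)$, in particular the identity $\tilde{\mathbf{q}}=\tilde\varrho\tilde{\mathbf{u}}$, follows because $\mathbf{q}_n=\varrho_n\mathbf{u}_n$ is a measurable identity preserved under pushforward, and the strong convergence $\tilde\varrho_n\to\tilde\varrho$ in $C([0,T];L^\gamma_{\mathrm{loc}})$ combined with the weak convergence of $\tilde{\mathbf{u}}_n$ suffices to pass to the limit in $\varrho_n\mathbf{u}_n$ in the sense of distributions.

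The main obstacle is technical rather than conceptual: one must carefully assemble countable separating families on non-metrizable Fréchet-type targets and verify that the measures $\mu^L$ are genuinely Radon on each factor (so that the classical Choquet/Aldous machinery underlying Jakubowski's version applies). In particular, for $C_w([0,T];L^\gamma_{\mathrm{loc}})$ one has to exploit that the tightness proof yields uniform continuity of $t\mapsto\langle\varrho_L(t),\psi\rangle$ for $\psi\in C^\infty_c$, which combined with the uniform bound in $L^\infty(0,T;L^\gamma_{\mathrm{loc}})$ from Lemma \ref{locLemma} produces compact sets suited to the countable-norm structure. The remaining conclusions are then direct read-offs from the theorem.
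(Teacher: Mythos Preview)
Your approach is correct and matches the paper's, which simply invokes the Jakubowski--Skorokhod representation theorem \cite{jakubowski1998short} after noting that $\chi$ is quasi-Polish and that tightness was established in Proposition~\ref{tightnessU}; you supply more detail on the countable separating families than the paper does, which is fine. One small overreach: the identification $\tilde{\mathbf{q}}=\tilde\varrho\tilde{\mathbf{u}}$ is not part of this proposition but is deferred to Corollary~\ref{cor:almostSure} (via \cite[Lemma~5.5]{Hof}), and your justification for it contains a slip---$\tilde\varrho_n\to\tilde\varrho$ holds only in $C_w([0,T];L^\gamma_{\mathrm{loc}})$, not strongly in $C([0,T];L^\gamma_{\mathrm{loc}})$, so the product limit requires the additional compactness argument carried out there rather than a direct weak--strong pairing.
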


To extend this new probability space $(\tilde{\Omega}, \tilde{\mathscr{F}}, \tilde{\mathbb{P}})$ into a stochastic basis, we endow it with a filtration. To do this, let us first define a restriction operator $\textbf{r}_t$ define by
\begin{align}
\label{continuousFunction}
\textbf{r}_t:X\rightarrow X\vert_{[0,t]}, \quad f\mapsto f\vert_{[0,t]},
\end{align}
for $t\in[0,T]$ and $X\in  \{ \chi_\varrho, \chi_{\mathbf{u}},  \chi_W \}$. We observe that $\textbf{r}_t$ is a continuous map. We can therefore construct $\tilde{\mathbb{P}}-$augmented canonical filtrations for $(\tilde{\varrho}_n, \tilde{\mathbf{u}}_n, \tilde{W}_n)$ and  $(\tilde{\varrho}, \tilde{\mathbf{u}},  \tilde{W})$ respectively, by setting
\begin{align*}
\tilde{\mathscr{F}}^n_t = \sigma\left( \sigma(\textbf{r}_t\tilde{\varrho}_n,  \textbf{r}_t\tilde{\mathbf{u}}_n,   \textbf{r}_t\tilde{W}_n)  \cup  \{N\in\tilde{\mathscr{F}};\,\tilde{\mathbb{P}}(N)=0  \} \right), \quad
t\in[0,T],
\\
\tilde{\mathscr{F}}_t = \sigma\left( \sigma(\textbf{r}_t\tilde{\varrho},   \textbf{r}_t\tilde{\mathbf{u}},  \textbf{r}_t\tilde{W})  \cup  \{N\in\tilde{\mathscr{F}};\,\tilde{\mathbb{P}}(N)=0  \} \right), \quad
t\in[0,T].
\end{align*}

The following result thus follows:

\begin{lemma}
\label{renorSeq}
For any $n>0$, $[ (\tilde{\Omega}, \tilde{\mathscr{F}}, (\tilde{\mathscr{F}}^n_t)_{t\geq0}, \tilde{\mathbb{P}}),\tilde{\varrho}_n, \tilde{\mathbf{u}}_n,   \tilde{W}_n]$ is a weak martingale solution of \eqref{comprSPDE} with initial law $\Lambda$. Furthermore,
there exists $b>\frac{3}{2}$ and a $W^{-b,2}(\mathbb{R}^3)-$valued continuous square integrable $(\tilde{\mathscr{F}}_t)-$martingale $\tilde{M}$ and  $\tilde{p} \in L^\frac{\gamma+\Theta}{\gamma}(\tilde{\Omega}\times Q)$, where $Q=(0,T)\times\mathbb{R}^3$, such that  $[ (\tilde{\Omega}, \tilde{\mathscr{F}}, (\tilde{\mathscr{F}}_t)_{t\geq0}, \tilde{\mathbb{P}}),\tilde{\varrho}, \tilde{\mathbf{u}}, \tilde{p},   \tilde{M}]$  is a  weak martingale solution of 
\begin{equation}
\begin{aligned}
\label{0comprSPDE}
\mathrm{d}\tilde{\varrho} + \mathrm{div}(\tilde{\varrho}\tilde{\mathbf{u}})\mathrm{d}t &= 0 \\
\mathrm{d}(\tilde{\varrho}\tilde{\mathbf{u}}) + [\mathrm{div}(\tilde{\varrho}\tilde{\mathbf{u}} \otimes \tilde{\mathbf{u}})-\nu\Delta \tilde{\mathbf{u}} -(\lambda +\nu)\nabla\mathrm{div}\tilde{\mathbf{u}} + \nabla \tilde{p}]\mathrm{d}t &= \mathrm{d}\tilde{M},\quad\text{in }\tilde{\Omega}\times Q
\end{aligned}
\end{equation}
with initial law $\Lambda$. Furthermore, \eqref{0comprSPDE}$_1$ is satisfied in the renormalized sense.
\end{lemma}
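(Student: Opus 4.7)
The plan is to split the argument in two. First, for each fixed $n$, transfer the weak martingale solution property from the original stochastic basis to $(\tilde{\Omega}, \tilde{\mathscr{F}}, (\tilde{\mathscr{F}}^n_t), \tilde{\mathbb{P}})$. Since $(\tilde{\mathbf{u}}_n, \tilde{\varrho}_n, \tilde{\mathbf{q}}_n, \tilde{W}_n)$ has the same law as $(\mathbf{u}_{L_n}, \varrho_{L_n}, \varrho_{L_n}\mathbf{u}_{L_n}, W)$, any identity satisfied $\mathbb{P}$-a.s. on the original space carries over $\tilde{\mathbb{P}}$-a.s.; the only delicate point is the stochastic integral, which must be shown to coincide with its pushed-forward version. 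This is a classical step (as in Bensoussan, or Breit--Feireisl--Hofmanov\'a), handled by showing $\tilde{W}_n$ is a cylindrical Wiener process with respect to $(\tilde{\mathscr{F}}^n_t)$, and then identifying the stochastic integrals through a polynomial approximation of elementary integrands and a density argument.

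Second, pass $n\to\infty$ in the momentum equation tested against $\phi\in C^\infty_c(\mathbb{R}^3)$. The $\tilde{\mathbb{P}}$-a.s. convergence in $\chi$ from Proposition \ref{prop:Jakubow0}, combined with the uniform bounds in Lemma \ref{locLemma} and Vitali's theorem, upgrades the weak convergence to convergence in expectation for each term. The linear terms pass to the limit directly. The convective term $\tilde{\varrho}_n\tilde{\mathbf{u}}_n\otimes\tilde{\mathbf{u}}_n = \tilde{\mathbf{q}}_n\otimes \tilde{\mathbf{u}}_n$ converges because $\tilde{\mathbf{q}}_n\to \tilde{\mathbf{q}}$ strongly in $C_w([0,T]; L^{2\gamma/(\gamma+1)}_{\mathrm{loc}})$ and $\tilde{\mathbf{u}}_n\to\tilde{\mathbf{u}}$ weakly in $L^2(0,T;W^{1,2}_{\mathrm{loc}})$, hence strongly in $L^2(0,T;L^r_{\mathrm{loc}})$ for any $r<6$ by Rellich, and a H\"{o}lder pairing identifies the product. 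The pressure $a\varrho_{n}^\gamma$ is uniformly bounded in $L^{(\gamma+\Theta)/\gamma}(\tilde{\Omega}\times Q_T^{\mathrm{loc}})$ by Lemma \ref{lem:higherDensity}, so along a subsequence it converges weakly to some $\tilde{p}\in L^{(\gamma+\Theta)/\gamma}(\tilde{\Omega}\times Q)$; at this stage $\tilde{p}$ need not equal $a\tilde{\varrho}^\gamma$, and its identification is the job of the effective viscous flux argument performed later.

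For the stochastic term, define $\tilde{M}_n(t)=\int_0^t\Phi(\tilde{\varrho}_n,\tilde{\mathbf{q}}_n)\,\mathrm{d}\tilde{W}_n$. Using \eqref{noiseSupport}, \eqref{stochCoeffBound} and the embedding $L^1(\mathcal{K})\hookrightarrow W^{-l,2}(\mathcal{K})$ for $l>3/2$, together with the uniform bounds from Lemma \ref{locLemma} and the BDG inequality, $\{\tilde{M}_n\}$ is bounded in $L^p(\tilde{\Omega}; C([0,T]; W^{-b,2}(\mathbb{R}^3)))$ for some $b>3/2$. A standard martingale identification argument (testing the conditional expectation characterization against bounded continuous functionals of the restriction $\mathbf{r}_s$ from \eqref{continuousFunction} and passing to the limit by Vitali) produces a continuous square integrable $(\tilde{\mathscr{F}}_t)$-martingale $\tilde{M}$ which is the limit of $\tilde{M}_n$ in the sense required by \eqref{0comprSPDE}. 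Finally, the renormalized continuity equation \eqref{renormalizedCont} on the new basis follows for each $n$ by equality of laws, and in the limit from the DiPerna--Lions commutator argument, since the continuity equation is deterministic and $\tilde{\varrho}_n\to\tilde{\varrho}$ strongly in $C_w([0,T]; L^\gamma_{\mathrm{loc}})$.

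The main obstacles are two: identifying the martingale $\tilde{M}$ on the new filtered space — a point one has to argue carefully because the natural candidate $\int \Phi(\tilde{\varrho}, \tilde{\mathbf{q}})\,\mathrm{d}\tilde{W}$ requires the martingale representation theorem applied to $\tilde{M}$, which is done separately — and controlling every cut-off in space, since on $\mathbb{R}^3$ compactness of embeddings is only local. Every convergence step must be localized to a ball $B_r$ using compactly supported test functions, and the uniform (in $L$ and hence in $n$) local bounds of Lemma \ref{locLemma} and Lemma \ref{lem:higherDensity} must be invoked to apply Vitali's theorem on each ball before letting $r\to\infty$.
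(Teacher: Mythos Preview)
Your overall strategy matches the standard one (and the paper simply refers to \cite[Proposition 5.6]{Hof} for the details), but there is one genuine error in how you pass to the limit in the convective term. You write that $\tilde{\mathbf{u}}_n\rightharpoonup\tilde{\mathbf{u}}$ weakly in $L^2(0,T;W^{1,2}_{\mathrm{loc}})$ and ``hence strongly in $L^2(0,T;L^r_{\mathrm{loc}})$ for any $r<6$ by Rellich''. This is false: Rellich--Kondrachov gives compactness of the \emph{spatial} embedding $W^{1,2}(B)\hookrightarrow L^r(B)$, but the embedding $L^2(0,T;W^{1,2}(B))\hookrightarrow L^2(0,T;L^r(B))$ is \emph{not} compact. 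To upgrade weak to strong convergence in a Bochner space you need time compactness (Aubin--Lions), and there is no a priori control on $\partial_t\tilde{\mathbf{u}}_n$ here --- only the momentum $\tilde{\varrho}_n\tilde{\mathbf{u}}_n$ satisfies an equation.

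The correct pairing (which is also what \cite{Hof} does, and what the paper records in Corollary \ref{cor:almostSure}) puts the strong convergence on the momentum rather than on the velocity. From $\tilde{\mathbf{q}}_n\to\tilde{\mathbf{q}}$ in $C_w([0,T];L^{2\gamma/(\gamma+1)}_{\mathrm{loc}})$ and the compact embedding $L^{2\gamma/(\gamma+1)}(B)\hookrightarrow W^{-1,2}(B)$ (valid since $\gamma>3/2$ gives $2\gamma/(\gamma+1)>6/5$) one gets $\tilde{\mathbf{q}}_n\to\tilde{\mathbf{q}}$ \emph{strongly} in $C([0,T];W^{-1,2}_{\mathrm{loc}})$, hence in $L^2(0,T;W^{-1,2}_{\mathrm{loc}})$. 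Pairing this strong convergence with the weak convergence $\tilde{\mathbf{u}}_n\rightharpoonup\tilde{\mathbf{u}}$ in $L^2(0,T;W^{1,2}_{\mathrm{loc}})$ via the $W^{-1,2}$--$W^{1,2}$ duality identifies the limit of $\tilde{\mathbf{q}}_n\otimes\tilde{\mathbf{u}}_n$. The rest of your sketch --- transfer by equality of laws, weak limit $\tilde{p}$ of the pressure, martingale identification for $\tilde{M}$, and the renormalized continuity equation --- is in line with the reference.
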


\begin{proof}
This follows in exactly the same manner as in \cite[Proposition 5.6]{Hof}.
\end{proof}

\begin{corollary}
\label{cor:almostSure}
The following $\tilde{\mathbb{P}}-$a.s. convergence holds:
\begin{equation}
\begin{aligned}
\label{almostSure}
\tilde{\mathbf{u}}_n \rightharpoonup \tilde{\mathbf{u}}\quad &\text{ in }\quad L^2(0,T;W^{1,2}_{\mathrm{loc}}(\mathbb{R}^3)),
\\
\tilde{\varrho}_n \rightarrow \tilde{\varrho}\quad &\text{ in }\quad  C_\omega([0,T];L^{\gamma}_{\mathrm{loc}}(\mathbb{R}^3)),
\\
\tilde{\varrho}_n \rightharpoonup \tilde{\varrho}\quad &\text{ in }\quad  L^{\gamma+\Theta}(0,T;L^{\gamma+\Theta}_{\mathrm{loc}}(\mathbb{R}^3)),
\\
\tilde{\varrho}_n\tilde{\mathbf{u}}_n \rightarrow \tilde{\varrho}\tilde{\mathbf{u}} \quad &\text{ in } \quad C_\omega([0,T] ;L^\frac{2\gamma}{\gamma+1}_{\mathrm{loc}}(\mathbb{R}^3)) \cap  L^2(0,T ;W^{-1,2}_{\mathrm{loc}}(\mathbb{R}^3)),
\\
\tilde{\varrho}_n \tilde{\mathbf{u}}_n\otimes \tilde{\mathbf{u}}_n  \rightharpoonup \tilde{\varrho}\tilde{\mathbf{u}}\otimes \tilde{\mathbf{u}} \quad  &\text{ in }  \quad  L^1(0,T;L^{1}_{\mathrm{loc}}(\mathbb{R}^3)),
\\
\tilde{W}_n \rightarrow \tilde{W}\quad &\text{ in }\quad  C\left([0,T];\mathfrak{U}_0\right),
\end{aligned}
\end{equation}
\end{corollary}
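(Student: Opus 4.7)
Lines one, two, three, the $C_\omega$ part of line four, and line six are nothing but the coordinate-wise unpacking of the $\tilde{\mathbb{P}}$-a.s.\ convergence in $\chi$ provided by Proposition \ref{prop:Jakubow0}, once one recalls that $\chi$ was defined as the product of precisely these topologies. The identification of the $\chi_{\varrho\mathbf{u}}$-limit with the product $\tilde{\varrho}\tilde{\mathbf{u}}$ is standard and is already recorded in Lemma \ref{renorSeq} (following \cite[Proposition 5.6]{Hof}); it combines the strong $C_\omega([0,T];L^\gamma_{\mathrm{loc}})$ convergence of $\tilde{\varrho}_n$ with the weak convergence of $\tilde{\mathbf{u}}_n$ against smooth test functions.

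To extract the strong convergence $\tilde{\varrho}_n\tilde{\mathbf{u}}_n \to \tilde{\varrho}\tilde{\mathbf{u}}$ in $L^2(0,T;W^{-1,2}_{\mathrm{loc}}(\mathbb{R}^3))$ from the $C_\omega([0,T];L^{2\gamma/(\gamma+1)}_{\mathrm{loc}})$ convergence, I would argue ball by ball. For any $B_r$ the embedding $L^{2\gamma/(\gamma+1)}(B_r)\hookrightarrow W^{-1,2}(B_r)$ is compact because its dual is the Rellich embedding $W^{1,2}_0(B_r)\hookrightarrow L^{2\gamma/(\gamma-1)}(B_r)$, which is compact precisely when $\gamma>\tfrac{3}{2}$ (forcing $2\gamma/(\gamma-1)<6$). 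Compactness converts the weak pointwise-in-$t$ convergence of $\tilde{\varrho}_n\tilde{\mathbf{u}}_n(t)$ into strong convergence in $W^{-1,2}(B_r)$ for each fixed $t\in[0,T]$; the uniform bound in \eqref{est00} and dominated convergence then upgrade this to convergence in $L^2(0,T;W^{-1,2}(B_r))$ for every $r$.

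Line five, the only genuinely nonlinear step, is obtained from the previous two. For a test tensor $\phi\in C_c^\infty(\mathbb{R}^3;\mathbb{R}^{3\times 3})$ supported in $B_r$ one writes
\begin{equation*}
\int_0^T\!\!\int_{\mathbb{R}^3}(\tilde{\varrho}_n\tilde{\mathbf{u}}_n^i)\,\tilde{\mathbf{u}}_n^j\,\phi_{ij}\,\mathrm{d}x\,\mathrm{d}t=\int_0^T\!\langle\tilde{\varrho}_n\tilde{\mathbf{u}}_n^i,\,\phi_{ij}\tilde{\mathbf{u}}_n^j\rangle_{W^{-1,2}(B_r),W^{1,2}_0(B_r)}\,\mathrm{d}t,
\end{equation*}
and uses the strong convergence of $\tilde{\varrho}_n\tilde{\mathbf{u}}_n^i$ in $L^2(0,T;W^{-1,2}(B_r))$ together with the weak convergence of $\phi_{ij}\tilde{\mathbf{u}}_n^j$ in $L^2(0,T;W^{1,2}_0(B_r))$ (from line one) to pass the duality pairing to its limit. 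This gives distributional convergence of $\tilde{\varrho}_n\tilde{\mathbf{u}}_n\otimes\tilde{\mathbf{u}}_n$ to $\tilde{\varrho}\tilde{\mathbf{u}}\otimes\tilde{\mathbf{u}}$, and the uniform $L^1(0,T;L^1_{\mathrm{loc}})$ bound from \eqref{est00} upgrades it to the stated $L^1(0,T;L^1_{\mathrm{loc}})$-weak convergence. The only genuine obstacle is that none of the embeddings invoked is compact on the whole space, so each step must first be localised to a ball $B_r$ and the $r\to\infty$ passage is precisely what the $\mathrm{loc}$-subscripts record.
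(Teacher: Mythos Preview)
Your argument is correct and follows the same route as the paper, which simply defers to Proposition~\ref{prop:Jakubow0} for lines 1--3 and 6 and to \cite[Lemma 5.5, Proposition 6.3]{Hof} for lines 4--5; you have unpacked precisely the compact-embedding and weak--strong pairing mechanism used there. One small clarification: in the final step a uniform $L^1$ bound alone would \emph{not} upgrade distributional convergence to weak-$L^1$ convergence (no equi-integrability), but the stronger $L^2(0,T;L^{6\gamma/(4\gamma+3)})$ bound actually contained in \eqref{est00} does the job, since that space is reflexive and embeds into $L^1_{\mathrm{loc}}$.
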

\begin{proof}
The first three and the last is exactly contained in Proposition \ref{prop:Jakubow0}. For \eqref{almostSure}$_{4,5}$, see \cite[Lemma 5.5, Proposition 6.3]{Hof}. 
\end{proof}

\begin{proposition}
\label{globalEst}
The limit process $\tilde{\mathbf{u}}$ in \eqref{almostSure}
is globally defined in space, i.e., $\tilde{\mathbf{u}}\in L^2(0,T;W^{1,2}(\mathbb{R}^3))$.
\end{proposition}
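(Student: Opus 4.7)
The plan is to upgrade the local bound $\tilde{\mathbf{u}}\in L^2(0,T;W^{1,2}_{\mathrm{loc}}(\mathbb{R}^3))$ from Corollary \ref{cor:almostSure} to a global one by decomposing space according to the value of the density. The essential observation is that the energy estimate \eqref{est0} already provides \emph{global} (rather than merely local) control of $\nabla\mathbf{u}_L$ in $L^2(\mathbb{T}^3_L)$, of $\sqrt{\varrho_L}\mathbf{u}_L$ in $L^2(\mathbb{T}^3_L)$ and of $\varrho_L-\overline{\varrho}$ in $L^{\min\{2,\gamma\}}(\mathbb{T}^3_L)$, with constants independent of $L$.

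First I would transfer these three global bounds to the limit. Identifying functions on $\mathbb{T}^3_{L_n}$ with their natural embeddings in $\mathbb{R}^3$, weak lower semi-continuity along the a.s.\ convergence of Corollary \ref{cor:almostSure} yields $\tilde{\mathbb{P}}$-a.s.
\begin{align*}
\nabla\tilde{\mathbf{u}} \in L^2(0,T;L^2(\mathbb{R}^3)),\quad \sqrt{\tilde{\varrho}}\tilde{\mathbf{u}} \in L^\infty(0,T;L^2(\mathbb{R}^3)),\quad \tilde{\varrho}-\overline{\varrho}\in L^\infty(0,T;L^{\min\{2,\gamma\}}(\mathbb{R}^3)).
\end{align*}
The homogeneous Sobolev inequality \eqref{homogeneousPoincare} with $q=2$ then gives $\tilde{\mathbf{u}}(t,\cdot)\in D^{1,2}(\mathbb{R}^3)\hookrightarrow L^6(\mathbb{R}^3)$ with $\|\tilde{\mathbf{u}}(t)\|_{L^6(\mathbb{R}^3)}\leq c\|\nabla\tilde{\mathbf{u}}(t)\|_{L^2(\mathbb{R}^3)}$, the constant $c$ being independent of the domain.

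For the decomposition, at each time $t$ I would split $\mathbb{R}^3=A_t\cup A_t^c$ with $A_t:=\{x\in\mathbb{R}^3:\tilde{\varrho}(t,x)\leq\overline{\varrho}/2\}$. On $A_t^c$, the pointwise bound $|\tilde{\mathbf{u}}|^2\leq \tfrac{2}{\overline{\varrho}}\,\tilde{\varrho}|\tilde{\mathbf{u}}|^2$ delivers $\int_{A_t^c}|\tilde{\mathbf{u}}|^2\,\mathrm{d}x\leq \tfrac{2}{\overline{\varrho}}\|\sqrt{\tilde{\varrho}}\tilde{\mathbf{u}}(t)\|_{L^2}^2$. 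On $A_t$, Chebyshev's inequality applied to $\tilde{\varrho}-\overline{\varrho}$ yields $|A_t|\leq c\,\overline{\varrho}^{-\min\{2,\gamma\}}\|\tilde{\varrho}(t)-\overline{\varrho}\|_{L^{\min\{2,\gamma\}}}^{\min\{2,\gamma\}}$ uniformly in $t$, and H\"older with exponents $(3/2,3)$ combined with the homogeneous Sobolev embedding gives
\begin{align*}
\int_{A_t}|\tilde{\mathbf{u}}|^2\,\mathrm{d}x\leq |A_t|^{2/3}\|\tilde{\mathbf{u}}(t)\|_{L^6(\mathbb{R}^3)}^2\leq c\,\|\nabla\tilde{\mathbf{u}}(t)\|_{L^2(\mathbb{R}^3)}^2.
\end{align*}
Summing and integrating in time yields $\tilde{\mathbf{u}}\in L^2(0,T;L^2(\mathbb{R}^3))$, which combined with the gradient bound is precisely the claim $\tilde{\mathbf{u}}\in L^2(0,T;W^{1,2}(\mathbb{R}^3))$.

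I expect the most delicate step to be the very first one, namely transferring the torus-level global bounds to $\mathbb{R}^3$, since the Jakubowski--Skorokhod machinery of Proposition \ref{prop:Jakubow0} was set up using only local weak topologies. This should be handled by testing the weak limits against an exhaustion $\phi_k\in C_c^\infty(\mathbb{R}^3)$ of $\mathbb{R}^3$ by cutoffs, passing to the limit in $\|\phi_k\nabla\tilde{\mathbf{u}}_n\|_{L^2}$ via the local weak convergence, and then sending $k\to\infty$ by monotone convergence against the uniform-in-$L$ bound coming from \eqref{est0}$_{2}$.
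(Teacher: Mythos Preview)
Your proposal is correct but takes a genuinely different and more elaborate route than the paper. The paper's own argument is very short: it only uses weak lower semicontinuity of the $L^2$-norm of $\nabla\tilde{\mathbf{u}}_n$ on balls $B_r$ together with the uniform-in-$L$ global bound \eqref{est0}$_2$, passes $r\to\infty$ by monotone convergence to obtain $\nabla\tilde{\mathbf{u}}\in L^2((0,T)\times\mathbb{R}^3)$, and then invokes the homogeneous Sobolev inequality \eqref{homogeneousPoincare} with $q=2$. In effect the paper stops at $\tilde{\mathbf{u}}\in L^2(0,T;D^{1,2}(\mathbb{R}^3))$ and identifies this with the stated conclusion. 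Your density-level decomposition $\mathbb{R}^3=A_t\cup A_t^c$ goes further: it additionally extracts $\tilde{\mathbf{u}}\in L^2(0,T;L^2(\mathbb{R}^3))$ from the global kinetic-energy and $(\tilde{\varrho}-\overline{\varrho})$ bounds, hence the genuine inhomogeneous $W^{1,2}(\mathbb{R}^3)$ regularity. This is strictly stronger than what the paper's quick argument delivers, at the cost of having to transfer two more global bounds ($\sqrt{\tilde{\varrho}}\tilde{\mathbf{u}}$ and $\tilde{\varrho}-\overline{\varrho}$) to the limit. Your anticipated ``most delicate step'' --- pushing torus-level global bounds through the local Jakubowski--Skorokhod topology via cutoffs and monotone convergence --- is exactly the mechanism the paper uses for $\nabla\tilde{\mathbf{u}}$, so that part is fine; for $\sqrt{\tilde{\varrho}}\tilde{\mathbf{u}}$ you would use convex weak lower semicontinuity of the kinetic energy functional $(\varrho,\mathbf{q})\mapsto |\mathbf{q}|^2/(2\varrho)$ rather than a direct norm argument.
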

\begin{proof}
%
%
Let $B_r\subset\mathbb{R}^3$ be an arbitrary ball of radius $r>0$.
Then from \eqref{almostSure}$_1$, we have  that for $\tilde{\mathbb{P}}-$ a.s.,
\begin{align*}
 \tilde{\mathbf{u}}_n \rightharpoonup\tilde{\mathbf{u}}\quad \text{in} \quad L^2(0,T;W^{1,2}(B_r)), \text{ for }r>0.
\end{align*}
However, lower semicontinuity of norms means that for any such $r>0$,
\begin{align*}
\Vert\chi_{B_r} \nabla \tilde{\mathbf{u}} \Vert_{L^2(0,T;L^{2}(\mathbb{R}^3))}= \Vert \nabla\tilde{\mathbf{u}}\Vert_{L^2(0,T;L^{2}(B_r))}\leq \liminf_{n\rightarrow\infty} \Vert \nabla \tilde{\mathbf{u}}_n\Vert_{L^2(0,T;L^{2}(B_r))} 
\end{align*}
$\tilde{\mathbb{P}}-$a.s. Passing to the limit $r\rightarrow\infty$ on either side of this inequality  finishes the proof since by the Gagliardo--Nirenberg--Sobolev inequality, \eqref{homogeneousPoincare} then follows for $q=2$.
\end{proof}


\subsection{The effective viscous flux}
This section combines ideas from \cite{feireisl2001existence, Hof} and \cite[Chapter 7]{straskraba2004introduction}.  

Let $ \Delta^{-1}$ be the inverse Laplacian on $\mathbb{R}^3$ and let the global-in-space operators $\mathcal{A}_i= \Delta^{-1}[\partial_{x_i}u],\quad i=1,2,3$ be as defined in \cite[Sect. 4.4.1]{straskraba2004introduction} or \cite[Sect. 3.4]{feireisl2001existence}.

Then by using the convention $\partial_i:=\partial_{x_i}$ and for some cutoff functions $\phi(x), \underline{\phi}(x)\in C^\infty_c(\mathbb{R}^3)$, we may do a similar computation as in \eqref{estimates}. That is, we apply It\^{o}'s formula to the function
$f(g ,\tilde{\mathbf{q}})=\int_{\mathbb{R}^3} \tilde{\mathbf{q}}\cdot \phi(x)\mathcal{A}_i [\underline{\phi}(x)g ]\,\mathrm{d}x$ where $\tilde{\mathbf{q}}=\tilde{\varrho}\tilde{\mathbf{u}}$ and where $g=T_k(\tilde{\varrho})$ and $T_k:[0,\infty)\rightarrow[0,\infty)$ is given by
\[
\label{Tk}
T_k(t) =
\begin{dcases}
 t  & \text{ if } 0\leq t< k, \\
k & \text{ if } k\leq t<\infty.
\end{dcases}
\]
Or equivalently, by testing the momentum equation satisfied by the sequence of weak martingale solution in Lemma \ref{renorSeq} by $\varphi_i(x)=\phi(x)\mathcal{A}_i[\underline{\phi}(x)T_k(\tilde{\varrho})]$. We obtain the following (by  assuming that $L$ is large enough such that $spt{\phi}\subset \mathbb{T}_L^3$)

\begin{equation}
\begin{aligned}
\label{rieszTrans0}
\tilde{\mathbb{E}} &\, \int_{\mathbb{R}^3}\phi \,\tilde{\varrho}_n\tilde{u}^i_n\, \mathcal{A}_i\left[\underline{\phi}T_k(\tilde{\varrho}_n)\right]\, \mathrm{d}x  =  \tilde{\mathbb{E}}\, \int_{\mathbb{R}^3}\phi\,\tilde{\varrho}_n\tilde{u}^i_n(0)\, \mathcal{A}_i\left[\underline{\phi}T_k(\tilde{\varrho}_n(0))\right]\, \mathrm{d}x
\\
&  - \tilde{\mathbb{E}}\, \int_0^t \int_{\mathbb{R}^3} \phi\,\tilde{\varrho}_n\tilde{u}^i_n\, \mathcal{A}_i[\underline{\phi}\, \partial_j( T_k(\tilde{\varrho}_n) \tilde{u}^j_n)] \mathrm{d}x\,\mathrm{d}s 
\\
& -   \tilde{\mathbb{E}}\, \int_0^t \int_{\mathbb{R}^3}  \phi\,\tilde{\varrho}_n\tilde{u}^i_n\, \mathcal{A}_i\left[\underline{\phi}\left(T_k'(\tilde{\varrho}_n)\,\tilde{\varrho}_n -T_k(\tilde{\varrho}_n) \right) \mathrm{div}\,\tilde{\mathbf{u}}_n\right]  \mathrm{d}x\,\mathrm{d}s   
\\
&+ \tilde{\mathbb{E}}\, \int_0^t \int_{\mathbb{R}^3}\tilde{\varrho}_n\tilde{u}^i_n \tilde{u}^j_n\, \partial_j(\phi\,\mathcal{A}_{i} [\underline{\phi}T_k(\tilde{\varrho}_n)])\,\mathrm{d}x\,\mathrm{d}s  
\\
&+ \nu \tilde{\mathbb{E}}\, \int_0^t \int_{\mathbb{R}^3} \phi\, \mathcal{A}_i[\underline{\phi}T_k(\tilde{\varrho}_n)]\,\Delta\tilde{u}^i_n \,\mathrm{d}x\,\mathrm{d}s 
 \\
&+ \tilde{\mathbb{E}}\, \int_0^t \int_{\mathbb{R}^3}[a\tilde{\varrho}^{\gamma}_n -(\lambda+ \nu) \mathrm{div}\tilde{\mathbf{u}}_n]\,\partial_i( \phi\mathcal{A}_i[\underline{\phi}\,T_k(\tilde{\varrho}_n)])\,\mathrm{d}x\,\mathrm{d}s    
\\
&=: \tilde{\mathbb{E}}\,\sum_{k=1}^{6 }J_k, \quad i=1,2,3.
\end{aligned}
\end{equation}
where $T_k$, as defined above, replaces $b$ in the definition of the renormalized equation given by \eqref{renormalizedCont}.
\begin{remark}
Notice that since  the approximate quantities in \eqref{almostSure} are only defined locally in space, to apply this globally defined operators $\mathcal{A}$, it is essentially to pre-multiply our functions by some $\underline{\phi}\in C^\infty_c(\mathbb{R})$.

Also, we observe that since our noise term is a martingale, it vanishes when we take its expectation, as martingales are constant on average.
\end{remark}
Now notice that by integration  by parts and the use of the properties of the operators $\mathcal{A}_i$ and $\mathcal{R}_{ij}=\partial_i \mathcal{A}_j$, we may rewrite $J_2, J_4, J_5$ and $J_6$ so that \eqref{rieszTrans0} becomes:
\begin{equation}
\begin{aligned}
\label{rieszTrans1}
&\tilde{\mathbb{E}}\, \int_0^t \int_{\mathbb{R}^3}[a\tilde{\varrho}^{\gamma}_n -(\lambda+ 2\nu) \mathrm{div}\tilde{\mathbf{u}}_n]\, \phi\,\underline{\phi}\,T_k(\tilde{\varrho}_n)\,\mathrm{d}x\,\mathrm{d}s
\\
&=
\tilde{\mathbb{E}} \, \int_{\mathbb{R}^3}\phi\,\tilde{\varrho}_n\tilde{u}^i_n\, \mathcal{A}_i\left[\underline{\phi}\,T_k(\tilde{\varrho}_n)\right]\, \mathrm{d}x
\\
 &-\tilde{\mathbb{E}}\, \int_{\mathbb{R}^3}\phi\,\tilde{\varrho}_n\tilde{u}^i_n(0)\, \mathcal{A}_i\left[\underline{\phi}\,T_k(\tilde{\varrho}_n(0))\right]\, \mathrm{d}x
 +\nu \tilde{\mathbb{E}}\, \int_0^t \int_{\mathbb{R}^3}\underline{\phi}\, \tilde{u}^i_n \, T_k(\tilde{\varrho}_n)\,\partial_i\phi \,\mathrm{d}x\,\mathrm{d}s 
 \\
&-\tilde{\mathbb{E}}\, \int_0^t \int_{\mathbb{R}^3}[a\tilde{\varrho}^{\gamma}_n -(\lambda+ \nu) \mathrm{div}\tilde{\mathbf{u}}_n] \mathcal{A}_i[\underline{\phi}\,T_k(\tilde{\varrho}_n)]\,\partial_i\phi \,\mathrm{d}x\,\mathrm{d}s   
\\
& +  \tilde{\mathbb{E}}\, \int_0^t \int_{\mathbb{R}^3}  \phi\,\tilde{\varrho}_n\tilde{u}^i_n\, \mathcal{A}_i\left[\underline{\phi}\,\left(T_k'(\tilde{\varrho}_n)\,\tilde{\varrho}_n -T_k(\tilde{\varrho}_n) \right) \mathrm{div}\,\tilde{\mathbf{u}}_n\right]  \mathrm{d}x\,\mathrm{d}s   
\\
&  + \tilde{\mathbb{E}}\, \int_0^t \int_{\mathbb{R}^3} \tilde{u}^i_n \left(\mathcal{R}_{ij}  [\phi\,\tilde{\varrho}_n\tilde{u}^j_n ]\,\underline{\phi}\, T_k(\tilde{\varrho}_n)  
- \phi\,\tilde{\varrho}_n \tilde{u}^j_n\,\mathcal{R}_{ij} [\underline{\phi}\,T_k(\tilde{\varrho}_n)]\right)\,\mathrm{d}x\,\mathrm{d}s  
\\
& + \tilde{\mathbb{E}}\, \int_0^t \int_{\mathbb{R}^3} \tilde{u}^j_n \left(\mathcal{A}_{i}  [\phi\,\tilde{\varrho}_n\tilde{u}^i_n ]\, T_k(\tilde{\varrho}_n)  \partial_j\underline{\phi}\,
- \tilde{\varrho}_n \tilde{u}^i_n\,\mathcal{A}_{i} [\underline{\phi}T_k(\tilde{\varrho}_n)]\partial_j \phi\,\right)\,\mathrm{d}x\,\mathrm{d}s  
\\
&=:
\tilde{\mathbb{E}}\,\sum_{k=1}^{7}I_k, \quad i=1,2,3.
\end{aligned}
\end{equation}
\begin{remark}
If we set the left-hand side of \eqref{rieszTrans1} to $\tilde{\mathbb{E}}I_0$, then we point the reader to the difference in the viscosity constant in $I_0$ and $I_4$. 
\end{remark}

Similarly for the limit processes, we obtain
\begin{equation}
\begin{aligned}
\label{rieszTrans3}
&\tilde{\mathbb{E}}\, \int_0^t \int_{\mathbb{R}^3}[a\tilde{p} -(\lambda+ 2\nu) \mathrm{div}\tilde{\mathbf{u}}]\, \phi\, \underline{\phi}\,\overline{T_k(\tilde{\varrho})}\,\mathrm{d}x\,\mathrm{d}s
=
\tilde{\mathbb{E}} \, \int_{\mathbb{R}^3}\phi\,\tilde{\varrho}\tilde{u}^i\, \mathcal{A}_i\left[\underline{\phi}\,\overline{T_k(\tilde{\varrho})}\right]\, \mathrm{d}x
\\
 &-\tilde{\mathbb{E}}\, \int_{\mathbb{R}^3}\phi\,\tilde{\varrho}\tilde{u}^i(0)\, \mathcal{A}_i\left[\underline{\phi}\,\overline{T_k(\tilde{\varrho}(0))}\right]\, \mathrm{d}x
 +\nu \tilde{\mathbb{E}}\, \int_0^t \int_{\mathbb{R}^3}\underline{\phi}\, \tilde{u}^i \, \overline{T_k(\tilde{\varrho})}\,\partial_i\phi\, \mathrm{d}x\,\mathrm{d}s 
  \\
&-\tilde{\mathbb{E}}\, \int_0^t \int_{\mathbb{R}^3}[a\overline{\tilde{p}} -(\lambda+ \nu) \mathrm{div}\tilde{\mathbf{u}}] \mathcal{A}_i[\underline{\phi}\,\overline{T_k(\tilde{\varrho})}]\,\partial_i\phi \,\mathrm{d}x\,\mathrm{d}s  
\\
& +  \tilde{\mathbb{E}}\, \int_0^t \int_{\mathbb{R}^3}  \phi\,\tilde{\varrho}\tilde{u}^i\, \mathcal{A}_i\left[\underline{\phi}\,\overline{\left(T_k'(\tilde{\varrho})\,\tilde{\varrho} -T_k(\tilde{\varrho}) \right) \mathrm{div}\,\tilde{\mathbf{u}}}\right]  \mathrm{d}x\,\mathrm{d}s   
\\
&  + \tilde{\mathbb{E}}\, \int_0^t \int_{\mathbb{R}^3} \tilde{u}^i \left(\mathcal{R}_{ij}  [\phi\,\tilde{\varrho}\tilde{u}^j ]\,\underline{\phi}\, \overline{T_k(\tilde{\varrho})}  
- \phi\,\tilde{\varrho} \tilde{u}^j \,\mathcal{R}_{ij} [\underline{\phi}\,\overline{T_k(\tilde{\varrho})}]\right)\,\mathrm{d}x\,\mathrm{d}s  
\\
& + \tilde{\mathbb{E}}\, \int_0^t \int_{\mathbb{R}^3} \tilde{u}^j \left(\mathcal{A}_{i}  [\phi\,\tilde{\varrho}\tilde{u}^i]\, \overline{T_k(\tilde{\varrho})}  \partial_j\underline{\phi}\,
- \tilde{\varrho} \tilde{u}^i \,\mathcal{A}_{i} [\underline{\phi}\overline{T_k(\tilde{\varrho})}]\partial_j \phi\,\right)\,\mathrm{d}x\,\mathrm{d}s  
\\
&=:
\tilde{\mathbb{E}}\,\sum_{k=1}^{7}K_k, \quad i=1,2,3.
\end{aligned}
\end{equation}
where a `bar' above a function represents the limit of the corresponding approximate sequence of functions.

\begin{lemma}
\label{rieszConv}
Let $\phi(x), \underline{\phi}(x) \in C^\infty_c(\mathbb{R}^3)$. Then the strong convergence
\begin{align*}
\mathcal{R} [\phi\,\tilde{\varrho}_n\tilde{u}^j_n ]\, &\underline{\phi}T_k(\tilde{\varrho}_n)  
- \phi\,\tilde{\varrho}_n \tilde{u}^j_n \,\mathcal{R} [\underline{\phi}T_k(\tilde{\varrho}_n)]
&\rightarrow \mathcal{R}  [\phi\,\tilde{\varrho}\tilde{u}^j ]\, \underline{\phi}\overline{T_k(\tilde{\varrho})}  
- \phi\,\tilde{\varrho} \tilde{u}^j \,\mathcal{R} [\underline{\phi}\overline{T_k(\tilde{\varrho})}] 
\end{align*}
holds  in $L^2\left(\tilde{\Omega}\times (0,T); 
{W^{-1,2}(\mathbb{R}^3)}\right)$ where $\mathcal{R}:=\mathcal{R}_{ij}$. 
\end{lemma}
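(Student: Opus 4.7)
The approach is to recast the statement as a commutator lemma of the Feireisl--Lions type, made possible by the cutoffs $\phi,\underline{\phi}\in C^\infty_c(\mathbb{R}^3)$. I would abbreviate $V_n:=\phi\,\tilde{\varrho}_n\tilde{u}^j_n$ and $r_n:=\underline{\phi}\,T_k(\tilde{\varrho}_n)$, with corresponding limits $V:=\phi\,\tilde{\varrho}\tilde{u}^j$ and $r:=\underline{\phi}\,\overline{T_k(\tilde{\varrho})}$. Both sequences are supported in a fixed ball $B\supset\mathrm{supp}\,\phi\cup\mathrm{supp}\,\underline{\phi}$, so the problem reduces to a compactness question localized to a bounded set, even though the operators $\mathcal{R}_{ij}$ are globally defined on $\mathbb{R}^3$.

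The inputs from Corollary \ref{cor:almostSure} are, $\tilde{\mathbb{P}}$-a.s.: $V_n\to V$ strongly in $L^2(0,T;W^{-1,2}_{\mathrm{loc}}(\mathbb{R}^3))$ (hence in $L^2(0,T;W^{-1,2}(\mathbb{R}^3))$, by the cutoff), while $|T_k|\le k$ makes $r_n$ uniformly bounded in $L^\infty$ with $r_n\rightharpoonup^\ast r$ in $L^\infty(Q)$. The key structural input is the commutator estimate: for compactly supported $V\in L^p(\mathbb{R}^3)$ and $r\in L^q(\mathbb{R}^3)$ with $\frac{1}{p}+\frac{1}{q}=\frac{1}{s}\in(0,1)$, one has
\begin{align*}
\Vert\mathcal{R}[V]\,r - V\,\mathcal{R}[r]\Vert_{W^{\alpha,s}(\mathbb{R}^3)}\leq c\,\Vert V\Vert_{L^p(\mathbb{R}^3)}\Vert r\Vert_{L^q(\mathbb{R}^3)}
\end{align*}
for some $\alpha=\alpha(p,q,s)>0$, as proved via Coifman--Rochberg--Weiss commutator theory for Calder\'on--Zygmund operators on $\mathbb{R}^3$; this is exactly the whole-space version of \cite[Lemma~10.12]{feireisl2001existence}. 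Applied with $p=2$, $q=\infty$, the sequence $\mathcal{R}[V_n]r_n - V_n\mathcal{R}[r_n]$ is $\tilde{\mathbb{P}}$-a.s.\ uniformly bounded in $L^2(0,T;W^{\alpha,2}(\mathbb{R}^3))$ with support in $B$, and the compact embedding $W^{\alpha,2}_B\hookrightarrow\hookrightarrow W^{-1,2}(\mathbb{R}^3)$ yields the required compactness.

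With these tools in hand, the proof proceeds in three steps. First, identify the limit: the weak-$\ast$ convergence of $r_n$, the strong $W^{-1,2}$ convergence of $V_n$, and the continuity of $\mathcal{R}$ on $L^2(\mathbb{R}^3)$ together imply that every weak limit of the commutator coincides with $\mathcal{R}[V]\,r - V\,\mathcal{R}[r]$ in the sense of distributions. Second, the commutator estimate and the compact embedding above upgrade this to strong convergence in $L^2(0,T;W^{-1,2}(\mathbb{R}^3))$ $\tilde{\mathbb{P}}$-a.s. Third, the uniform $L^p(\tilde{\Omega})$ bounds recorded in Lemma \ref{locLemma} (applied with $p>2$) furnish equi-integrability in $\omega$, so that Vitali's convergence theorem promotes the almost-sure convergence to strong convergence in $L^2(\tilde{\Omega}\times(0,T);W^{-1,2}(\mathbb{R}^3))$.

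The main obstacle is the legitimacy of the commutator estimate on the whole space: the version in \cite{feireisl2001existence} and its sequels is typically formulated on bounded or periodic domains, and one has to check that the argument transfers without loss of constants to $\mathbb{R}^3$. This is however essentially automatic because the Riesz transforms $\mathcal{R}_{ij}=\partial_i\Delta^{-1}\partial_j$ are Calder\'on--Zygmund operators bounded on $L^p(\mathbb{R}^3)$ for $1<p<\infty$, and the compact support supplied by $\phi,\underline{\phi}$ removes any difficulty at infinity; the constants in the commutator estimate depend only on $\|\phi\|_{C^1},\|\underline{\phi}\|_{C^1}$ and the exponents, not on the location of $B$.
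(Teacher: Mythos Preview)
Your proof has a genuine gap in its central estimate. The inequality
\[
\Vert\mathcal{R}[V]\,r - V\,\mathcal{R}[r]\Vert_{W^{\alpha,s}(\mathbb{R}^3)}\leq c\,\Vert V\Vert_{L^p}\Vert r\Vert_{L^q}
\]
with $\alpha>0$ is false in general, and Coifman--Rochberg--Weiss does not give it: that theorem concerns $L^p$-boundedness of the \emph{different} commutator $[b,\mathcal{R}]f=b\,\mathcal{R}[f]-\mathcal{R}[bf]$ for $b\in\mathrm{BMO}$, with no smoothing. A one-dimensional counterexample with the Hilbert transform and $V=\chi_{[0,1]}$, $r=\chi_{[0,1+\epsilon]}$ already produces jump discontinuities of height $\sim|\log\epsilon|$ in $\mathcal{R}[V]r-V\mathcal{R}[r]$ while $\|V\|_p,\|r\|_q$ stay bounded, so no uniform $W^{\alpha,s}$ bound survives. (The reference \cite[Lemma~10.12]{feireisl2001existence} you cite does not exist; that paper has no Section~10.) There is also a secondary gap: even granting the estimate, a uniform $L^2(0,T;W^{\alpha,2})$ bound together with the compact spatial embedding does not yield strong convergence in $L^2(0,T;W^{-1,2})$ without time compactness.

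The argument in the references the paper cites (\cite[Sect.~6.1]{Hof}, \cite[Eq.~7.5.23]{straskraba2004introduction}) supplies both missing ingredients from the equations. First, testing against smooth $\psi$ and using self-adjointness of $\mathcal{R}_{ij}$ gives $\int(\mathcal{R}[V_n]r_n-V_n\mathcal{R}[r_n])\psi\,\mathrm{d}x=\int V_n\,[\mathcal{R},\psi]r_n\,\mathrm{d}x$, and now $[\mathcal{R},\psi]$ \emph{is} smoothing (order $-1$, since $\psi$ is smooth), so $[\mathcal{R},\psi]r_n$ converges strongly and the weak limit is identified. Second, the time regularity comes from the PDE: $r_n=\underline{\phi}T_k(\tilde{\varrho}_n)$ obeys the renormalized continuity equation and $V_n=\phi\tilde{\varrho}_n\tilde{u}^j_n$ the momentum equation, so $\partial_t r_n$ and $\partial_t V_n$ are uniformly controlled in $L^r(0,T;W^{-m,s})$; this, combined with the $C_w([0,T];L^s)$ bounds and an Aubin--Lions argument, produces the required strong convergence in $L^2(0,T;W^{-1,2})$. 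Your sketch recognises the role of the cutoffs and of Vitali for the $\omega$-integration, but the spatial compactness has to come from the equations, not from a bilinear smoothing estimate.
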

\begin{proof}
See \cite[Sect. 6.1]{Hof} or the deterministic counterpart in \cite[Eq. 7.5.23]{straskraba2004introduction}.
\end{proof}

Now by using the  weak-strong pair: $\eqref{almostSure}_{1}$ and Lemma \ref{rieszConv}, we can pass to the limit in the crucial term $I_6$ to  get $\tilde{\mathbb{E}}\, I_6 \rightarrow \tilde{\mathbb{E}}\, K_6$. 

All other terms can be treated in a similar manner as in \cite[Sect. 6.1]{Hof} keeping in mind that the terms involving derivatives and cutoff functions are of lower order and hence easier to handle. In particular, we obtain the convergence  $\tilde{\mathbb{E}}\, I_7 \rightarrow \tilde{\mathbb{E}}\, K_7$ by observing that $\mathcal{R}=\partial_j\mathcal{A}_i$.

We have therefore shown that
\begin{equation}
\begin{aligned}
\label{effectiveVIsco}
\lim_{n\rightarrow 0}\tilde{\mathbb{E}}\int_Q & \left[a\tilde{\varrho}^\gamma_n-(\lambda+2\nu)\mathrm{div}\,\tilde{\mathbf{u}}_n  \right]\phi\underline{\phi} T_k(\tilde{\varrho}_n) \,\mathrm{d}x\, \mathrm{d}t
\\
&=
\tilde{\mathbb{E}}\int_Q\left[a\tilde{p}-(\lambda+2\nu)\mathrm{div}\,\tilde{\mathbf{u}}  \right]\phi\underline{\phi} \overline{T_k(\tilde{\varrho})}  \,\mathrm{d}x\, \mathrm{d}t
\end{aligned}
\end{equation}
\subsection{Identification of the pressure limit}

Showing that indeed $\tilde{p}=\tilde{\varrho}^\gamma$ or equivalently that $\tilde{\varrho}_n\rightarrow \tilde{\varrho}$ strongly in $L^p(\tilde{\Omega}\times Q)$ for all $p\in[1,\gamma+\Theta)$ follows Feireisl's approach via the use of the so-called \textit{oscillation defect measure}. This is a purely deterministic argument even in our stochastic settings since it relies on the renormalized continuity equation. To avoid repetition, we refer the reader to \cite[Sect. 7.3.7.3]{straskraba2004introduction} or \cite{feireisl2001compactness}. To confirm that it indeed applies in the stochastic setting, the reader may also refer to \cite[Sect. 6.2 and 6.3]{Hof}.

We now conclude with the following lemma which completes the proof of Theorem \ref{thm:dissi}.

\begin{lemma}
$[ (\tilde{\Omega}, \tilde{\mathscr{F}}, (\tilde{\mathscr{F}}_t)_{t\geq0}, \tilde{\mathbb{P}}),\tilde{\varrho}, \tilde{\mathbf{u}},  \tilde{W}]$ is a finite energy weak martingale solution of \eqref{comprSPDE} with initial law $\Lambda$. Furthermore, \eqref{comprSPDE}$_1$ is satisfied in the renormalized sense.
\end{lemma}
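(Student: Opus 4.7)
The plan is to piece together the three ingredients that have already been assembled: the abstract momentum equation in Lemma \ref{renorSeq}, the effective viscous flux identity \eqref{effectiveVIsco}, and the strong density convergence $\tilde\varrho_n\to\tilde\varrho$ in $L^p(\tilde\Omega\times Q_T)$ for $1\le p<\gamma+\Theta$ promised by the oscillation defect measure argument. The latter at once yields $\tilde p=\tilde\varrho^\gamma$ a.e., so the deterministic part of the momentum equation \eqref{0comprSPDE} becomes the deterministic part of \eqref{comprSPDE}. What remains is then (i) identification of the abstract martingale $\tilde M$ with the stochastic integral $\int_0^\cdot \Phi(\tilde\varrho,\tilde\varrho\tilde{\mathbf u})\,\mathrm d\tilde W$, (ii) the renormalized continuity equation for the limit, and (iii) the finite-energy estimate \eqref{energyEx}.

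For step (i) I would argue by quadratic variation. Writing $\phi\in C^\infty_c(\mathbb R^3)$ and defining, for each $n$, the process
\[
N_n(t):=\langle\tilde\varrho_n\tilde{\mathbf u}_n(t)-\tilde\varrho_n\tilde{\mathbf u}_n(0),\phi\rangle-\int_0^t\Big[\langle\tilde\varrho_n\tilde{\mathbf u}_n\otimes\tilde{\mathbf u}_n,\nabla\phi\rangle-\nu\langle\nabla\tilde{\mathbf u}_n,\nabla\phi\rangle-(\lambda+\nu)\langle\mathrm{div}\,\tilde{\mathbf u}_n,\mathrm{div}\,\phi\rangle+\tfrac{1}{\mathrm{Ma}^2}\langle\tilde\varrho_n^\gamma,\mathrm{div}\,\phi\rangle\Big]\mathrm ds,
\]
we know by the periodic theory that $N_n=\int_0^\cdot\langle\Phi(\tilde\varrho_n,\tilde\varrho_n\tilde{\mathbf u}_n)\,\mathrm d\tilde W_n,\phi\rangle$, so $N_n^2-\sum_k\int_0^\cdot\langle g_k(\tilde\varrho_n,\tilde\varrho_n\tilde{\mathbf u}_n),\phi\rangle^2\,\mathrm ds$ and $N_n\tilde W_n-\int_0^\cdot\sum_k\langle g_k(\tilde\varrho_n,\tilde\varrho_n\tilde{\mathbf u}_n),\phi\rangle\,e_k\,\mathrm ds$ are $(\tilde{\mathscr F}^n_t)$-martingales. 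Using \eqref{almostSure}, the strong convergence of $\tilde\varrho_n$, Vitali's convergence theorem (applied with the uniform moment bounds \eqref{est0}, \eqref{est00}, \eqref{stochCoeffBound}), one passes to the limit in the defining martingale identities and shows that the analogous objects for $\tilde M$ are $(\tilde{\mathscr F}_t)$-martingales with quadratic variation $\sum_k\int_0^\cdot\langle g_k(\tilde\varrho,\tilde\varrho\tilde{\mathbf u}),\phi\rangle^2\,\mathrm ds$; the standard representation argument (cf.\ \cite{Hof}) then identifies $\tilde M$ as the desired stochastic integral.

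For step (ii), since each $(\tilde\varrho_n,\tilde{\mathbf u}_n)$ already satisfies \eqref{renormalizedCont} by Lemma \ref{renorSeq}, the limit passage is routine given the strong $L^p_{t,x}$ convergence of the density, the local weak convergence of $\tilde{\mathbf u}_n$ in $L^2_tW^{1,2}_{x}$, and the growth constraints on $b$ (which ensure $b(\tilde\varrho_n)$, $b'(\tilde\varrho_n)\tilde\varrho_n$ stay uniformly integrable in light of \eqref{est0} and Lemma \ref{lem:higherDensity}). Alternatively, one may mollify $\tilde\varrho$ and invoke the DiPerna--Lions commutator lemma, whose remainder vanishes in $L^1_{\mathrm{loc}}$ thanks to $\tilde\varrho\in L^{\gamma+\Theta}_{t,x,\mathrm{loc}}$ and $\tilde{\mathbf u}\in L^2_tW^{1,2}_{x,\mathrm{loc}}$. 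For step (iii), the energy inequality \eqref{energyEx} follows by taking lim inf in \eqref{18}: convexity of $z\mapsto H(z)$ and of $(z,\mathbf q)\mapsto|\mathbf q|^2/(2z)$ (on $\{z>0\}$), weak lower semicontinuity of the dissipation $\int\mathbb S(\nabla\cdot):\nabla\cdot$, and Fatou's lemma on $\tilde\Omega$ give the inequality, while the uniform integrability of the right-hand side in \eqref{18} is inherited from the hypothesis on $\Lambda$.

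The most delicate point is step (i): one must keep the adaptedness to the correct filtration $(\tilde{\mathscr F}_t)$ under the change of probability space, and must pass to the limit inside the infinite sum defining the quadratic variation. The first is handled by the continuity of the restriction operator $\mathbf r_t$ defined in \eqref{continuousFunction}; the second exploits the cutoff \eqref{noiseSupport} that confines $g_k(\cdot,\tilde\varrho_n,\tilde\varrho_n\tilde{\mathbf u}_n)$ to a fixed compact set $\mathcal K$, together with \eqref{stochCoeffBound}, so that dominated convergence applies term by term and the tails of the sum are uniformly small.
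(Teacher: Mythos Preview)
Your proposal is correct and follows precisely the approach the paper intends: the paper states this lemma without proof, treating it as the assembly of the preceding ingredients (Lemma~\ref{renorSeq}, the effective viscous flux identity~\eqref{effectiveVIsco}, and the oscillation defect measure argument referenced to \cite[Sect.~6.2--6.3]{Hof}), together with the martingale identification procedure from \cite{Hof}. Your three-step breakdown---identifying $\tilde M$ via quadratic variation, passing to the limit in the renormalized continuity equation, and recovering the energy inequality by lower semicontinuity---is exactly the content implicit in the paper's references, so nothing further is needed.
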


\section{Proof of Theorem \ref{thm:one}}
\label{singularLimit}

For every $\varepsilon>0$, let assume there exits a finite energy weak martingale solution of Eq. \eqref{comprSPDE} given by
\begin{align*}
\left[(\Omega^\varepsilon,\mathscr{F}^\varepsilon,(\mathscr{F}^\varepsilon_t),\mathbb{P}^\varepsilon), \varrho_\varepsilon, \mathbf{u}_\varepsilon , W_\varepsilon  \right].
\end{align*}
Then by setting $\overline{\varrho}=1$ and $a=\frac{1}{\varepsilon^2}$ in \eqref{pressurePotential}, and applying Taylor expansion to the function $f(\varrho)=\varrho^\gamma$ around $\varrho=1$, we get
\begin{align*}
\mathbb{E}&\left[\int_{\mathbb{R}^3}\left(\frac{\vert \mathbf{q}_\varepsilon(0)\vert^2}{2\varrho_\varepsilon(0)} +{H(\varrho_\varepsilon(0))}
\right)\mathrm{d}x\right]^p    
\\
&=  \int_{L^\gamma\times L^{\frac{2\gamma}{\gamma+1}}}\left\Vert  \frac{\vert \mathbf{q}_\varepsilon\vert^2}{2\varrho_\varepsilon}  
+ \frac{\gamma z^{\gamma-2}}{2\varepsilon^2}(\varrho_\varepsilon-1)^2 \right\Vert^p_{L^1(\mathbb{R}^3)}\mathrm{d}\Lambda(\varrho_\varepsilon, \mathbf{q}_\varepsilon)
\leq c_{p,T}
\end{align*}
for $z\in[\varrho,1] \text{ or }  z\in[1,{\varrho}]$ and where we have used the initial law in Theorem \ref{thm:dissi}. Similar to Section \ref{existence}, we  can now collect the following uniform (in $\varepsilon$) bounds
\begin{equation}
\begin{aligned}
\label{summary}
\varphi_\varepsilon &\in L^p(\Omega;L^\infty(0,T;L^{\min\{2,\gamma\}}(\mathbb{R}^3))),
\\
 \nabla \mathbf{u}_\varepsilon &\in L^p\left(\Omega;L^2(0,T;L^{2}(\mathbb{R}^3))\right),
 \\
\sqrt{\varrho_\varepsilon} \mathbf{u}_\varepsilon &\in L^p\left(\Omega;L^\infty(0,T;L^{2}(\mathbb{R}^3))\right),
\\
\varrho_\varepsilon \mathbf{u}_\varepsilon  &\in L^p(\Omega; L^\infty(0,T;L_{\mathrm{loc}}^{\frac{2\gamma}{\gamma+1}}(\mathbb{R}^3))),
\\
\varrho_\varepsilon  \mathbf{u}_\varepsilon \otimes \mathbf{u}_\varepsilon &\in L^p(\Omega;L^2(0,T;L_{\mathrm{loc}}^{\frac{6\gamma}{4\gamma +3}}(\mathbb{R}^3)) ),
\end{aligned}
\end{equation}
where $\varphi_\varepsilon:=\frac{\varrho_\varepsilon-1}{\varepsilon}$ and where
 \begin{align}
 \label{convergenceofDensity}
 \varrho_\varepsilon \rightarrow 1\quad \text{in }L^p\left(\Omega;L^\infty(0,T;L^\gamma_{\mathrm{loc}}(\mathbb{R}^3)) \right).
 \end{align}
 cf. \eqref{est0} (with $\overline{\varrho}=1$), \eqref{est00} and \cite[eqn. 3.6]{breit2015incompressible}.


\subsection{Acoustic wave equation}
Let $\Delta^{-1}$ represent the inverse of the Laplace operator on $\mathbb{R}^3$ and let $\mathcal{Q}= \nabla\Delta^{-1}\mathrm{div}$ and $\mathcal{P}$ be, respectively, the gradient and solenoidal parts according to Helmoltz decomposition. Then referring again to \cite{breit2015incompressible}, we observe that by setting $\varphi_\varepsilon= \frac{\varrho_\varepsilon-1}{\varepsilon}$ and $\mathrm{Id} = \mathcal{Q}+ \mathcal{P}$, we derive from equation \eqref{comprSPDE}:
\begin{equation}
\begin{aligned}
\label{acousticSPD}
\varepsilon\mathrm{d}\varphi_\varepsilon +  \mathrm{div}\mathcal{Q}(\varrho_\varepsilon \mathbf{u}_\varepsilon)\mathrm{d}t   &=0,
\\
\varepsilon  \mathcal{Q}\Phi(\varrho_\varepsilon,\varrho_\varepsilon \mathbf{u}_\varepsilon)\mathrm{d}W  - \gamma \nabla \varphi_\varepsilon\mathrm{d}t &=   \varepsilon \mathrm{d}\mathcal{Q}(\varrho_\varepsilon \mathbf{u}_\varepsilon) - \varepsilon \textbf{F}_\varepsilon\mathrm{d}t,
\end{aligned}
\end{equation}
where 
\begin{align*}
\textbf{F}_\varepsilon  = \mathrm{div}\mathcal{Q}(\varrho_\varepsilon \mathbf{u}_\varepsilon\otimes \mathbf{u}_\varepsilon)-\nu\Delta\mathcal{Q} \mathbf{u}_\varepsilon -(\lambda +\nu)\nabla\mathrm{div}\mathbf{u}_\varepsilon + \frac{1}{\varepsilon^2}\nabla [\varrho_\varepsilon^\gamma - 1 -\gamma(\varrho_\varepsilon -1)].
\end{align*}
Now let us observe that from \eqref{summary}$_5$ and the continuity of $\mathcal{Q}$, we have that 
\begin{align}
\label{F0}
\mathrm{div}\mathcal{Q}(\varrho_\varepsilon  \mathbf{u}_\varepsilon \otimes \mathbf{u}_\varepsilon)\in L^p(\Omega;L^2(0,T;W_{\mathrm{loc}}^{-1,\frac{6\gamma}{4\gamma +3}}(\mathbb{R}^3)) )
\end{align}
independently of $\varepsilon$. And that
\begin{align}
\label{F1}
 \nu \Delta\mathcal{Q}\mathbf{u}_\varepsilon  +(\lambda+\nu)\nabla\mathrm{div} \mathbf{u}_\varepsilon \in L^p(\Omega;L^2(0,T;W^{-1,2}_{\mathrm{loc}}(\mathbb{R}^3)) )
\end{align}
uniformly in $\varepsilon$ by virtue of \eqref{summary}$_2$. Lastly, the choice of $a=\frac{1}{\varepsilon^2}$ and $\overline{\varrho}_\varepsilon=1$ in the pressure potential \eqref{pressurePotential} of the energy estimate \eqref{energyEx} and Taylor's theorem means that for $s:=\min\{2,\gamma\}>1$,
\begin{align}
\label{F3}
\frac{1}{\varepsilon^2}\nabla [\varrho_\varepsilon^\gamma - 1 -\gamma(\varrho_\varepsilon -1)] \in L^p(\Omega;L^\infty(0,T;W^{-1,s}_{\mathrm{loc}}(\mathbb{R}^3)) ).
\end{align}
uniformly in $\varepsilon$. cf. \eqref{pressurePotentialA} for $\overline{\varrho}=1$ and \eqref{summary}$_1$. By combining \eqref{F0}, \eqref{F1} and \eqref{F3} with the embeddings $W^{-1,\frac{6\gamma}{4\gamma+3}}(B_r)\hookrightarrow W^{-l,2}(B_r)$ and $W^{-1,s}(B_r)\hookrightarrow W^{-l,2}(B_r)$, where $B_r$ is a ball of radius $r>0$, it holds that for $l>5/2$, 
\begin{equation}
\begin{aligned}
\label{force}
\textbf{F}_\varepsilon   &\in  L^p\left(\Omega;L^2(0,T;W^{-l,2}_{\mathrm{loc}}(\mathbb{R}^3))\right)
\end{aligned}
\end{equation}  uniformly in $\varepsilon$.

\subsection{Compactness}
\label{singularLimitCompactness}
To explore compactness for the acoustic equation, let first define the path space $\chi = \chi_\varrho \times \chi_\mathbf{u} \times \chi_{\varrho \mathbf{u}} \times \chi_W$ where
\begin{align*}
\chi_\varrho &= C_\omega\left([0,T];L^\gamma_{\mathrm{loc}}(\mathbb{R}^3)\right),
&
\chi_\mathbf{u} &= \left(L^2(0,T;W^{1,2}_{\mathrm{loc}}(\mathbb{R}^3)),\omega\right),
\\
\chi_{\varrho \mathbf{u}} &= C_\omega\left([0,T];L^{\frac{2\gamma}{\gamma+1}}_{\mathrm{loc}}(\mathbb{R}^3)\right),
&
\chi_W &= C\left([0,T];\mathfrak{U}_0\right),
\end{align*}
and let
\begin{enumerate}
\item $\mu_{\varrho_\varepsilon}$ be the law of $\varrho_\varepsilon$ on the space $\chi_{\varrho}$,
\item $\mu_{\mathbf{u}_\varepsilon}$ be the law of $\mathbf{u}_\varepsilon$ on $\chi_{\mathbf{u}}$,
\item $\mu_{\mathcal{P}(\varrho_\varepsilon \mathbf{u}_\varepsilon)}$ be the law of $\mathcal{P}(\varrho_\varepsilon \mathbf{u}_\varepsilon)$ on the space $\chi_{\varrho \mathbf{u}}$,
\item $\mu_{W}$ be the law of $W$ on the space $\chi_{W}$,
\item $\mathrm{\mu}^\varepsilon$ be the joint law of $\varrho_\varepsilon,\mathbf{u}_\varepsilon,\mathcal{P}(\varrho_\varepsilon \mathbf{u}_\varepsilon)$, and $W$ on the space $\chi$.
\end{enumerate}
Then the following lemma, the proof of which is similar to \cite[Corollary 3.7]{breit2015incompressible},  holds true.

\begin{lemma}
\label{prop:tightness}
The sets 
 $\{\mu^\varepsilon; \varepsilon\in(0,1) \}$ is tight on  $\chi$.
\end{lemma}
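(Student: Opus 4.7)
The plan is to prove tightness marginal-by-marginal on the four component spaces and then invoke the standard fact that a product of tight families is tight on the product space. Three of the four marginals have already essentially been handled in the existence proof (Proposition \ref{tightnessU}), so the novelty lies entirely in the treatment of $\mathcal{P}(\varrho_\varepsilon\mathbf{u}_\varepsilon)$, where we must exploit the fact that the Helmholtz projection annihilates the singular pressure gradient $\varepsilon^{-2}\nabla \varrho_\varepsilon^\gamma$.

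\textbf{Easy marginals.} The law $\mu_W$ is tight because it is a single Radon measure on the Polish space $\chi_W$. For $\mu_{\mathbf{u}_\varepsilon}$, I would mimic the proof of Proposition \ref{compact AR} verbatim: the uniform bound \eqref{summary}$_2$ (which is in fact global in space and certainly holds locally) combined with the Banach--Alaoglu theorem shows that closed balls of radius $c(r)R$ in $L^2(0,T;W^{1,2}(B_r))$ are weakly compact, hence the intersection $A_R$ defined with the weighted local norms is compact in $\chi_\mathbf{u}$, and Chebyshev's inequality plus \eqref{summary}$_2$ gives $\mu_{\mathbf{u}_\varepsilon}((A_R)^c)\leq c/R$ uniformly in $\varepsilon$. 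For $\mu_{\varrho_\varepsilon}$, I would combine the uniform bound \eqref{convergenceofDensity} on $\varrho_\varepsilon$ in $L^p(\Omega;L^\infty(0,T;L^\gamma_{\mathrm{loc}}))$ with time-equicontinuity obtained directly from the continuity equation $\partial_t\varrho_\varepsilon=-\mathrm{div}(\varrho_\varepsilon\mathbf{u}_\varepsilon)$, which, via the uniform bound \eqref{summary}$_4$, gives $\partial_t\varrho_\varepsilon \in L^p(\Omega;L^\infty(0,T;W^{-1,2\gamma/(\gamma+1)}_{\mathrm{loc}}))$. A standard Arzel\`a--Ascoli argument in the weak topology (as in \cite{Hof}) then produces compact subsets of $\chi_\varrho=C_\omega([0,T];L^\gamma_{\mathrm{loc}})$.

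\textbf{The main obstacle: tightness of $\mu_{\mathcal{P}(\varrho_\varepsilon\mathbf{u}_\varepsilon)}$.} The reason one works with the solenoidal part rather than with $\varrho_\varepsilon\mathbf{u}_\varepsilon$ itself is that the gradient part $\mathcal{Q}(\varrho_\varepsilon\mathbf{u}_\varepsilon)$ carries the rapidly oscillating acoustic waves governed by the singular system \eqref{acousticSPD} and has no uniform time regularity. Applying $\mathcal{P}$ to the momentum equation \eqref{comprSPDE} eliminates $\nabla(\varrho_\varepsilon^\gamma/\varepsilon^2)$ entirely, leaving
\begin{equation*}
\mathrm{d}\mathcal{P}(\varrho_\varepsilon\mathbf{u}_\varepsilon) = \mathcal{P}\bigl[-\mathrm{div}(\varrho_\varepsilon\mathbf{u}_\varepsilon\otimes\mathbf{u}_\varepsilon)+\nu\Delta\mathbf{u}_\varepsilon+(\lambda+\nu)\nabla\mathrm{div}\mathbf{u}_\varepsilon\bigr]\mathrm{d}t+\mathcal{P}\Phi(\varrho_\varepsilon,\varrho_\varepsilon\mathbf{u}_\varepsilon)\,\mathrm{d}W.
\end{equation*}
By \eqref{summary}$_{2,5}$ and the continuity of $\mathcal{P}$ on Sobolev spaces, the drift is uniformly bounded in $L^p(\Omega;L^2(0,T;W^{-l,2}_{\mathrm{loc}}(\mathbb{R}^3)))$ for $l$ large enough, so a deterministic H\"older-in-time estimate of exponent $\tfrac12$ holds in that negative space uniformly in $\varepsilon$. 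For the stochastic part, the growth hypothesis \eqref{stochCoeffBound} together with the Burkholder--Davis--Gundy inequality and the embedding $L^1(\mathcal{K})\hookrightarrow W^{-l,2}(\mathcal{K})$ (cf.\ Sect.\ \ref{Preliminaries}) gives an arbitrary H\"older exponent $\alpha<\tfrac12$ in $W^{-l,2}_{\mathrm{loc}}(\mathbb{R}^3)$, uniformly in $\varepsilon$. Combined with the uniform spatial bound \eqref{summary}$_4$, which makes bounded sets of $L^\infty(0,T;L^{2\gamma/(\gamma+1)}(B_r))$ relatively weakly-$*$ compact, an Arzel\`a--Ascoli argument in the weak topology of $L^{2\gamma/(\gamma+1)}_{\mathrm{loc}}$ (exactly as in \cite[Corollary 3.7]{breit2015incompressible}) identifies compact sets in $\chi_{\varrho\mathbf{u}}=C_\omega([0,T];L^{2\gamma/(\gamma+1)}_{\mathrm{loc}})$ whose complements have $\mu_{\mathcal{P}(\varrho_\varepsilon\mathbf{u}_\varepsilon)}$-measure tending to zero via Chebyshev.

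\textbf{Conclusion.} Having established tightness of each marginal on its respective component space, the joint law $\mu^\varepsilon$ is tight on the product space $\chi$, since a finite product of compact sets is compact and one can choose each marginal's exceptional set to carry mass at most $\delta/4$. The main technical difficulty, as emphasised above, is obtaining the uniform-in-$\varepsilon$ H\"older time regularity of $\mathcal{P}(\varrho_\varepsilon\mathbf{u}_\varepsilon)$: without the cancellation $\mathcal{P}\nabla\equiv 0$ the pressure contribution would be of order $\varepsilon^{-2}$ and tightness would break down, which is precisely the mechanism that justifies our choice of the solenoidal component as the third coordinate of $\chi$.
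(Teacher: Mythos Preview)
Your proposal is correct and follows essentially the same approach as the paper, which simply defers to \cite[Corollary 3.7]{breit2015incompressible}; you have supplied precisely the details that reference contains, correctly isolating the key cancellation $\mathcal{P}\nabla\equiv 0$ that yields $\varepsilon$-uniform time regularity for the solenoidal momentum. One cosmetic remark: the term $(\lambda+\nu)\nabla\mathrm{div}\mathbf{u}_\varepsilon$ you kept in the projected drift is itself a gradient and hence also annihilated by $\mathcal{P}$, though this of course only helps.
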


Now similar to Proposition \ref{prop:Jakubow0}, we apply the Jakubowski--Skorokhod representation theorem \cite{jakubowski1998short} to get the following proposition.

\begin{proposition}
\label{prop:Jakubow}
There exists a subsequence $\mu^\varepsilon$ (not relabelled), a probability space $(\tilde{\Omega}, \tilde{\mathscr{F}}, \tilde{\mathbb{P}})$ with $\chi$-valued Borel measurable random variables $(\tilde{\varrho}_\varepsilon, \tilde{\mathbf{u}}_\varepsilon, \tilde{\mathbf{q}_\varepsilon}, \tilde{W}_\varepsilon)$, $n\in\mathbb{N}$, and $(\tilde{\varrho}, \tilde{\mathbf{u}}, \tilde{\mathbf{q}}, \tilde{W})$ such that
\begin{itemize}
\item the law of $(\tilde{\varrho}_\varepsilon, \tilde{\mathbf{u}}_\varepsilon, \tilde{\mathbf{q}_\varepsilon}, \tilde{W}_\varepsilon)$ is given by $\mu^\varepsilon$, $\varepsilon\in(0,1)$,
\item the law of  $(\tilde{\varrho}, \tilde{\mathbf{u}}, \tilde{\mathbf{q}}, \tilde{W})$, denoted by $\mu$ is a Randon measure,
\item $(\tilde{\varrho}_\varepsilon, \tilde{\mathbf{u}}_\varepsilon, \tilde{\mathbf{q}_\varepsilon}, \tilde{W}_\varepsilon)$ converges $\tilde{\mathbb{P}}-$a.s to $(\tilde{\varrho}, \tilde{\mathbf{u}}, \tilde{\mathbf{q}}, \tilde{W})$ in the topology of $\chi$.
\end{itemize}
\end{proposition}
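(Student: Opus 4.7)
The proof is essentially an invocation of the Jakubowski--Skorokhod representation theorem, and the plan is to verify that all the ingredients are in place.

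First I would check that the path space $\chi=\chi_\varrho\times\chi_{\mathbf{u}}\times\chi_{\varrho\mathbf{u}}\times\chi_W$ falls within the scope of Jakubowski's theorem. The spaces $\chi_\varrho$, $\chi_{\varrho\mathbf{u}}$ are countable intersections of weakly continuous trajectories into separable reflexive spaces (localised via balls $B_r$), $\chi_{\mathbf{u}}$ is the Bochner space $L^2(0,T;W^{1,2}_{\mathrm{loc}}(\mathbb R^3))$ equipped with its weak topology (again obtained as a countable limit over $B_r$), and $\chi_W$ is Polish. None of these spaces is metrizable in general, but each is quasi-Polish: on each factor there exists a countable family of continuous real-valued functions that separates points (for the weak topologies one uses evaluations against a countable dense collection of test functions $\phi\in C_c^\infty(B_r)$ for $r\in\mathbb N$, and on $C_\omega([0,T];\cdot)$ one combines these evaluations with time-samples on a countable dense set). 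This is exactly the setting in which Jakubowski's version of Prokhorov/Skorokhod is valid.

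Next I would combine this with the tightness statement of Lemma \ref{prop:tightness}: the family $\{\mu^\varepsilon;\,\varepsilon\in(0,1)\}$ is tight on $\chi$. Jakubowski's generalisation of Prokhorov's theorem then produces a subsequence (not relabelled) and a Radon probability measure $\mu$ on $\chi$ with $\mu^\varepsilon\rightharpoonup\mu$ narrowly. Jakubowski's representation theorem (applied to this convergent sequence of measures on the quasi-Polish space $\chi$) then delivers a new probability space $(\tilde\Omega,\tilde{\mathscr F},\tilde{\mathbb P})$ carrying $\chi$-valued Borel random variables $(\tilde\varrho_\varepsilon,\tilde{\mathbf u}_\varepsilon,\tilde{\mathbf q}_\varepsilon,\tilde W_\varepsilon)$ and a limit $(\tilde\varrho,\tilde{\mathbf u},\tilde{\mathbf q},\tilde W)$ such that the law of each tuple equals $\mu^\varepsilon$ and $\mu$ respectively, and the convergence holds $\tilde{\mathbb P}$-almost surely in the topology of $\chi$.

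The only point requiring a brief remark is that $\tilde{\mathbf q}_\varepsilon$ is the copy of the solenoidal momentum $\mathcal P(\varrho_\varepsilon\mathbf u_\varepsilon)$, not of $\varrho_\varepsilon\mathbf u_\varepsilon$ itself; this is already how $\chi_{\varrho\mathbf u}$ was designed in Section \ref{singularLimitCompactness}. I anticipate no genuine obstacle here beyond the bookkeeping of checking the topological hypotheses of Jakubowski's theorem for the locally-in-space weak topologies; this is a direct mimic of Proposition \ref{prop:Jakubow0} established earlier in the paper, and the rest is an immediate citation of \cite{jakubowski1998short}.
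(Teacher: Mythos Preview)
Your proposal is correct and matches the paper's approach exactly: the paper does not give a standalone proof of this proposition but simply states that it follows from applying the Jakubowski--Skorokhod representation theorem \cite{jakubowski1998short} to the tight family from Lemma~\ref{prop:tightness}, in direct analogy with Proposition~\ref{prop:Jakubow0}. If anything, your write-up supplies more detail on the quasi-Polish structure of $\chi$ than the paper itself does.
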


To extend this new probability space $(\tilde{\Omega}, \tilde{\mathscr{F}}, \tilde{\mathbb{P}})$ into a stochastic basis, we endow it with the  $\tilde{\mathbb{P}}-$augmented canonical filtrations for $(\tilde{\varrho}_\varepsilon, \tilde{\mathbf{u}}_\varepsilon, \tilde{W}_\varepsilon)$ and  $(\tilde{\varrho}, \tilde{\mathbf{u}}, \tilde{W})$, respectively, by setting
\begin{align*}
\tilde{\mathscr{F}}^\varepsilon_t = \sigma\left( \sigma(\textbf{r}_t\tilde{\varrho}_\varepsilon,  \textbf{r}_t\tilde{\mathbf{u}}_\varepsilon,  \textbf{r}_t\tilde{W}_\varepsilon)  \cup  \{N\in\tilde{\mathscr{F}};\,\tilde{\mathbb{P}}(N)=0  \} \right), \quad
t\in[0,T],
\\
\tilde{\mathscr{F}}_t = \sigma\left( \sigma(  \textbf{r}_t\tilde{\mathbf{u}},  \textbf{r}_t\tilde{W})  \cup  \{N\in\tilde{\mathscr{F}};\,\tilde{\mathbb{P}}(N)=0  \} \right), \quad
t\in[0,T].
\end{align*}
where $\textbf{r}_t$ is the continuous function defined in \eqref{continuousFunction} above adapted to the spaces defined in this section.

\subsection{Identification of the limit}
We now verify that on this new probability space, our new processes  
$$[ (\tilde{\Omega}, \tilde{\mathscr{F}}, (\tilde{\mathscr{F}}^\varepsilon_t), \tilde{\mathbb{P}}),\tilde{\varrho}_\varepsilon, \tilde{\mathbf{u}}_\varepsilon,  \tilde{W}_\varepsilon] \text{ and } [ (\tilde{\Omega}, \tilde{\mathscr{F}}, (\tilde{\mathscr{F}}_t), \tilde{\mathbb{P}}), \tilde{\mathbf{u}},  \tilde{W}]$$
are indeed finite energy weak martingale solutions and a weak martingale solution respectively for Eqs. \eqref{comprSPDE} and \eqref{incomprSPDE}.

\begin{proposition}
$[ (\tilde{\Omega}, \tilde{\mathscr{F}}, (\tilde{\mathscr{F}}^\varepsilon_t)_{t\geq0}, \tilde{\mathbb{P}}),\tilde{\varrho}_\varepsilon, \tilde{\mathbf{u}}_\varepsilon,  \tilde{W}_\varepsilon]$ is a finite energy weak martingale solution of Eq. \eqref{comprSPDE} with initial law $\Lambda_\varepsilon$ for $\varepsilon\in(0,1)$.
\end{proposition}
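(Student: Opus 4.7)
The proof follows the standard procedure for promoting the Skorokhod representatives obtained in Proposition \ref{prop:Jakubow} to a bona fide finite energy weak martingale solution; I would essentially repeat the construction carried out in the last subsection of Section \ref{existence} (for the existence proof), but with $\varepsilon$ playing the role of the parameter instead of $L$. First, I would observe that equality of laws between $(\tilde{\varrho}_\varepsilon,\tilde{\mathbf{u}}_\varepsilon,\tilde{\mathbf{q}}_\varepsilon,\tilde{W}_\varepsilon)$ and $(\varrho_\varepsilon,\mathbf{u}_\varepsilon,\varrho_\varepsilon\mathbf{u}_\varepsilon,W_\varepsilon)$ transfers all the uniform estimates from \eqref{summary} and \eqref{convergenceofDensity} to the tilded variables (by Fatou and the fact that norms are Borel measurable on the path spaces), so items (3)--(5) of Definition \ref{def:martSolution} are immediate. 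Adaptedness with respect to $(\tilde{\mathscr{F}}^\varepsilon_t)$ holds by construction of the augmented canonical filtration, while $\tilde{\mathbf{q}}_\varepsilon=\tilde{\varrho}_\varepsilon\tilde{\mathbf{u}}_\varepsilon$ follows from equality of joint laws of $(\varrho_\varepsilon,\mathbf{u}_\varepsilon,\varrho_\varepsilon\mathbf{u}_\varepsilon)$. The condition on the initial law (item (6)) is preserved because the law of $(\tilde{\varrho}_\varepsilon(0),\tilde{\mathbf{q}}_\varepsilon(0))$ equals that of $(\varrho_\varepsilon(0),\mathbf{q}_\varepsilon(0))$, which is $\Lambda_\varepsilon$.

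Next I would show that $\tilde{W}_\varepsilon$ is a cylindrical $(\tilde{\mathscr{F}}^\varepsilon_t)$-Wiener process. This is the classical argument: since $W_\varepsilon$ is an $(\mathscr{F}^\varepsilon_t)$-cylindrical Wiener process and the joint law is preserved by Skorokhod, the processes $\tilde{\beta}^\varepsilon_k(t):=\langle \tilde{W}_\varepsilon(t),e_k\rangle$ are mutually independent real Brownian motions, and by the characterization of Brownian motion via Lévy's theorem (or directly by checking that $\tilde{\beta}^\varepsilon_k(t)-\tilde{\beta}^\varepsilon_k(s)$ is independent of $\tilde{\mathscr{F}}^\varepsilon_s$ and normally distributed with variance $t-s$), they are Brownian with respect to the augmented canonical filtration.

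The main obstacle is item (7): identifying the stochastic integral on the new probability space, that is, showing that the momentum equation holds with $\int_0^t\Phi(\tilde{\varrho}_\varepsilon,\tilde{\varrho}_\varepsilon\tilde{\mathbf{u}}_\varepsilon)\,\mathrm d\tilde{W}_\varepsilon$ as the martingale part. I would proceed by the standard Bensoussan-type identification: define, for a fixed test function $\phi\in C^\infty_c(\mathbb{R}^3)$ and the original variables, the continuous square-integrable $(\mathscr{F}^\varepsilon_t)$-martingale
\begin{equation*}
M^\varepsilon_\phi(t):=\langle \varrho_\varepsilon\mathbf{u}_\varepsilon(t),\phi\rangle-\langle\varrho_\varepsilon\mathbf{u}_\varepsilon(0),\phi\rangle-\int_0^t\langle\varrho_\varepsilon\mathbf{u}_\varepsilon\otimes\mathbf{u}_\varepsilon,\nabla\phi\rangle\,\mathrm ds+\nu\int_0^t\langle\nabla\mathbf{u}_\varepsilon,\nabla\phi\rangle\,\mathrm ds+\cdots
\end{equation*}
together with the cross-variations $M^\varepsilon_\phi(t)\tilde{\beta}^\varepsilon_k(t)-\int_0^t\langle g_k(\varrho_\varepsilon,\varrho_\varepsilon\mathbf{u}_\varepsilon),\phi\rangle\,\mathrm ds$ and the quadratic variation $[M^\varepsilon_\phi]_t-\sum_k\int_0^t\langle g_k(\varrho_\varepsilon,\varrho_\varepsilon\mathbf{u}_\varepsilon),\phi\rangle^2\,\mathrm ds$, all of which are $(\mathscr{F}^\varepsilon_t)$-martingales. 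By equality of laws, the corresponding quantities built from the tilded variables enjoy the same martingale property with respect to $(\tilde{\mathscr{F}}^\varepsilon_t)$, and a standard argument (see for instance the treatment in \cite{Hof} or \cite{breit2015incompressible}) identifies the tilded martingale as $\int_0^\cdot\Phi(\tilde{\varrho}_\varepsilon,\tilde{\varrho}_\varepsilon\tilde{\mathbf{u}}_\varepsilon)\,\mathrm d\tilde{W}_\varepsilon$ tested against $\phi$. Uniform integrability, which is needed to pass the martingale property through the pushforward, is provided by the moment bounds in \eqref{summary} together with the growth condition \eqref{stochCoeffBound} and the compact support \eqref{noiseSupport}.

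Finally, the energy inequality \eqref{energyEx} is preserved because it is a $\mathbb{P}$-a.s. pathwise inequality involving only Borel functionals of $(\varrho_\varepsilon,\mathbf{u}_\varepsilon,\mathbf{q}_\varepsilon,W_\varepsilon)$, and the renormalized continuity equation \eqref{renormalizedCont} carries over by the same equality-of-laws/identification argument as above but in the deterministic continuity equation, which is even easier since no stochastic integral is involved. This gives all nine items of Definition \ref{def:martSolution} and completes the proof. The one genuinely delicate point, as I indicated, is the martingale identification of the stochastic integral; the rest is bookkeeping of law equivalences and extraction of pathwise properties.
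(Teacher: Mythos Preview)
Your proposal is correct and follows exactly the standard approach the paper invokes: the paper itself does not give a detailed argument but simply refers to \cite[Proposition 3.10]{breit2015incompressible}, whose proof proceeds precisely along the lines you outline (transfer of estimates by equality of laws, verification that $\tilde W_\varepsilon$ is a cylindrical Wiener process with respect to the augmented canonical filtration, and the Bensoussan-type martingale identification of the stochastic integral via the functionals $M$, $N$, $N_k$). Your identification of the stochastic-integral step as the only genuinely delicate point is also accurate.
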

The proof of this proposition is similar to \cite[Proposition 3.10]{breit2015incompressible}. 

Consequently, the uniform bounds shown in \eqref{summary}, \eqref{convergenceofDensity}  and \eqref{force} earlier hold for these corresponding random processes on this new space. In particular, we have that 
\begin{equation}
\begin{aligned}
\label{newSummary}
\tilde{\varphi}_\varepsilon  &\in  L^p\left(\Omega;L^\infty(0,T;L^{\min\{2,\gamma\}}(\mathbb{R}^3)) \right),
\\
\tilde{\textbf{F}}_\varepsilon  &\in  L^p\left(\Omega;L^2(0,T;W^{-l,2}_{\mathrm{loc}}(\mathbb{R}^3)) \right),
\\
\tilde{\mathbf{u}}_\varepsilon  &\in  L^p\left(\Omega;L^2(0,T;W^{1,2}_{\mathrm{loc}}(\mathbb{R}^3)) \right),
\\
\tilde{\varrho}_\varepsilon \tilde{\mathbf{u}}_\varepsilon  &\in L^p(\Omega; L^\infty(0,T;L^{\frac{2\gamma}{\gamma+1}}_{\mathrm{loc}}(\mathbb{R}^3)))
\end{aligned}
\end{equation}
holds uniformly in $\varepsilon$ for $p\in[1,\infty)$ and where $l>5/2$, $\tilde{\varphi}_\varepsilon=  \frac{\tilde{\varrho}_\varepsilon  -1}{\varepsilon}$ and 
\begin{align*}
\tilde{\textbf{F}}_\varepsilon= \mathrm{div}\mathcal{Q}(\tilde{\varrho}_\varepsilon \tilde{\mathbf{u}}_\varepsilon\otimes \tilde{\mathbf{u}}_\varepsilon)-\nu\Delta\mathcal{Q} \tilde{\mathbf{u}}_\varepsilon -(\lambda +\nu)\nabla\mathrm{div}\tilde{\mathbf{u}}_\varepsilon + \frac{1}{\varepsilon^2}\nabla [\tilde{\varrho}_\varepsilon^\gamma - 1 -\gamma(\tilde{\varrho}_\varepsilon -1)]
\end{align*}

We now verify that indeed the limit  process satisfies Definition \ref{def:martSolutionIncompre}. This will complete the proof of Theorem \ref{thm:one}.
\begin{proposition}
\label{prop:limitMarttingale}
$[ (\tilde{\Omega}, \tilde{\mathscr{F}}, (\tilde{\mathscr{F}}_t)_{t\geq0}, \tilde{\mathbb{P}}), \tilde{\mathbf{u}},  \tilde{W}]$ is a weak martingale solution of Eq. \eqref{incomprSPDE} with initial law $\Lambda$.
\end{proposition}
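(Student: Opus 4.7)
The strategy is to pass to the limit $\varepsilon\to 0$ in the weak formulation of the momentum equation tested against divergence-free test functions $\phi\in C^\infty_{c,\mathrm{div}}(\mathbb{R}^3)$, so that the singular pressure term $\frac{1}{\varepsilon^2}\langle\tilde{\varrho}_\varepsilon^\gamma,\mathrm{div}\phi\rangle$ vanishes identically. The remaining items of Definition \ref{def:martSolutionIncompre} are checked as follows: the regularity class and $(\tilde{\mathscr{F}}_t)$-adaptedness of $\tilde{\mathbf{u}}$ follow from Proposition \ref{prop:Jakubow}, the uniform bounds \eqref{newSummary}, weak lower semicontinuity on the whole space (argued as in Proposition \ref{globalEst}), and the construction of the filtration; the identity $\Lambda=\tilde{\mathbb{P}}\circ\tilde{\mathbf{u}}(0)^{-1}$ is obtained from the assumed convergence of the second marginal of $\Lambda_\varepsilon$ together with $\tilde{\varrho}_\varepsilon\tilde{\mathbf{u}}_\varepsilon\to\tilde{\mathbf{u}}$ on the initial time slice.

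To see that $\tilde{\mathbf{u}}$ is solenoidal, I would pass to the limit in the continuity equation $\partial_t\tilde{\varrho}_\varepsilon+\mathrm{div}(\tilde{\varrho}_\varepsilon\tilde{\mathbf{u}}_\varepsilon)=0$; since \eqref{convergenceofDensity} gives $\tilde{\varrho}_\varepsilon\to 1$ and $\tilde{\varrho}_\varepsilon\tilde{\mathbf{u}}_\varepsilon\to\tilde{\mathbf{u}}$ in the sense of distributions, the time derivative vanishes in $\mathcal{D}'$ and $\mathrm{div}\tilde{\mathbf{u}}=0$ results. Equivalently this can be read off the first line of the acoustic system \eqref{acousticSPD} after sending $\varepsilon\to 0$.

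The main obstacle is the convective term, where I would split
\begin{align*}
\tilde{\varrho}_\varepsilon\tilde{\mathbf{u}}_\varepsilon\otimes\tilde{\mathbf{u}}_\varepsilon
=\mathcal{P}(\tilde{\varrho}_\varepsilon\tilde{\mathbf{u}}_\varepsilon)\otimes\tilde{\mathbf{u}}_\varepsilon
+\mathcal{Q}(\tilde{\varrho}_\varepsilon\tilde{\mathbf{u}}_\varepsilon)\otimes\tilde{\mathbf{u}}_\varepsilon.
\end{align*}
The first summand can be handled by the weak/strong pairing: the solenoidal part $\mathcal{P}(\tilde{\varrho}_\varepsilon\tilde{\mathbf{u}}_\varepsilon)$ converges strongly in $C_\omega([0,T];L^{2\gamma/(\gamma+1)}_{\mathrm{loc}})$ by the compactness of Section \ref{singularLimitCompactness} (its time derivative is controlled uniformly from the momentum equation), while $\tilde{\mathbf{u}}_\varepsilon\rightharpoonup\tilde{\mathbf{u}}$ weakly in $L^2(0,T;W^{1,2}_{\mathrm{loc}})$, yielding convergence to $\tilde{\mathbf{u}}\otimes\tilde{\mathbf{u}}$. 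The second, oscillatory summand is where the whole-space setting is essential: using the bound \eqref{force} on $\tilde{\mathbf{F}}_\varepsilon$, I would apply the dispersive/Strichartz estimates for the acoustic wave equation \eqref{acousticSPD} (Proposition \ref{gradientPartOfMomentumConvergece}), giving $\mathcal{Q}(\tilde{\varrho}_\varepsilon\tilde{\mathbf{u}}_\varepsilon)\to 0$ strongly in $L^2(0,T;L^r_{\mathrm{loc}})$ for suitable $r$ (Lemma \ref{momentumStrong}), which kills this term in the limit. Combined with the strong convergence of the full momentum $\tilde{\varrho}_\varepsilon\tilde{\mathbf{u}}_\varepsilon\to\tilde{\mathbf{u}}$ in $L^2(0,T;L^r_{\mathrm{loc}})$ for $r\in(3/2,6)$, this also secures the third convergence claimed in Theorem \ref{thm:one}.

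Finally, for the stochastic term I would apply $\mathcal{P}$ to the momentum equation, noting that $\mathcal{P}\nabla(\cdot)=0$ eliminates the pressure term unconditionally. The growth and derivative bounds \eqref{stochCoeffBound} on $g_k$, together with $\tilde{\varrho}_\varepsilon\to 1$ and $\tilde{\varrho}_\varepsilon\tilde{\mathbf{u}}_\varepsilon\to\tilde{\mathbf{u}}$ strongly locally in space, yield
\begin{align*}
\mathcal{P}\Phi(\tilde{\varrho}_\varepsilon,\tilde{\varrho}_\varepsilon\tilde{\mathbf{u}}_\varepsilon)\longrightarrow\mathcal{P}\Phi(1,\tilde{\mathbf{u}})\quad\text{in }L^2\big(\tilde{\Omega}\times(0,T);L_2(\mathfrak{U};W^{-l,2}_{\mathrm{loc}}(\mathbb{R}^3))\big).
\end{align*}
Together with $\tilde{W}_\varepsilon\to\tilde{W}$ in $C([0,T];\mathfrak{U}_0)$, the standard martingale identification (matching quadratic variations and cross-variations, as in \cite{breit2015incompressible,Hof}) passes the It\^o integral to the limit and completes the verification of Definition \ref{def:martSolutionIncompre}.
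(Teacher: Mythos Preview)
Your proposal is correct and follows essentially the same route as the paper: test against divergence-free $\phi$ to kill the singular pressure, use the dispersive estimate of Proposition~\ref{gradientPartOfMomentumConvergece} to dispose of $\mathcal{Q}(\tilde{\varrho}_\varepsilon\tilde{\mathbf{u}}_\varepsilon)$, combine this with the compactness of the solenoidal part to get strong convergence of the momentum (Lemma~\ref{momentumStrong}), feed that and \eqref{stochCoeffBound} into the noise coefficients, and close with the martingale/quadratic-variation identification of \cite{breit2015incompressible,Hof}. The paper packages the convective-term convergence by citing \cite[Proposition~3.13]{breit2015incompressible} rather than writing out your $\mathcal{P}/\mathcal{Q}$ splitting explicitly, and handles the noise via the mean-value inequality on $g_k$ (Lemma~\ref{lem:N}), but these are the same ingredients you invoke.
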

\begin{proof}
The proof of this proposition will follow from the following lemmata and propositions.
\begin{lemma}
\label{lem:M}
For all $t\in [0,T]$ and $\phi \in C^\infty_c(\mathbb{R}^3)$, we let
\begin{align*}
M(\varrho, \mathbf{u}, \mathbf{q})_t   &=  \langle \mathbf{q}(t), \phi \rangle   -  \langle \mathbf{q}(0), \phi \rangle  -  \int_0^t  \langle \mathbf{q}\otimes  \mathbf{u}, \nabla\phi \rangle\mathrm{d}s  +\nu \int_0^t  \langle \nabla \mathbf{u}, \nabla\phi \rangle\mathrm{d}s
\\
&+(\lambda+  \nu)\int_0^t  \langle \mathrm{div} \,\mathbf{u}, \mathrm{div}\,\phi \rangle\mathrm{d}s  -  \frac{1}{\varepsilon^2}\int_0^t  \langle \varrho^\gamma, \mathrm{div}\phi \rangle\mathrm{d}s.
\end{align*}
Then
$M(\tilde{\varrho}_\varepsilon,  \tilde{\mathbf{u}}_\varepsilon,  \tilde{\varrho}_\varepsilon\tilde{\mathbf{u}}_\varepsilon)_{t}  \rightarrow    M(1,  \tilde{\mathbf{u}},  \tilde{\mathbf{u}})_{t}$  $\tilde{\mathbb{P}}-$a.s. as $\varepsilon\rightarrow 0$.
\end{lemma}
\begin{proof}
The proof of this lemma  follows further from combining Proposition \ref{prop:Jakubow} with the Lemmata \ref{allStrongbutOne}, \ref{lem:expBound} and Proposition \ref{gradientPartOfMomentumConvergece} below. 

\begin{lemma}
\label{allStrongbutOne}
For every $q<6$, the following $\tilde{\mathbb{P}}-$a.s. convergence holds:
\begin{align}
(\tilde{\varrho}_\varepsilon -1)\rightarrow  0  \quad &\text{in}\quad L^\infty(0,T;L^{\min\{2,\gamma\}}(\mathbb{R}^3)), \label{strongDens}
\\
\mathcal{P}(\tilde{\varrho}_\varepsilon \tilde{\mathbf{u}}_\varepsilon)  \rightarrow \tilde{\mathbf{u}}\quad &\text{in}\quad L^2(0,T;W_{\mathrm{loc}}^{-1,2}(\mathbb{R}^3)),
\label{solenoidalPartOfMomentumConvergece} \\
\mathcal{P}\tilde{\mathbf{u}}_\varepsilon\rightarrow \tilde{\mathbf{u}}\quad &\text{in}\quad L^2(0,T;L_{\mathrm{loc}}^q(\mathbb{R}^3)).
\label{strongSolenoidalvelocity}
\end{align}
\end{lemma}
\begin{proof}
See \cite{breit2015incompressible}.
\end{proof}

\begin{remark}
Henceforth, we write `$\lesssim$' for `$\leq\,c$' and `$\eqsim$' for `$=\,c$' where $c$, which may varies from line to line is some universal constant that is independent of $\varepsilon$ but may depend on other variables.
\end{remark}

\begin{proposition}
\label{gradientPartOfMomentumConvergece}
The strong convergence below holds.
\begin{align*}
\mathcal{Q}(\tilde{\varrho}_\varepsilon \tilde{\mathbf{u}}_\varepsilon)  \rightarrow 0\quad \text{in}\quad L^2(0,T;L^\frac{2\gamma}{\gamma+1}_{\mathrm{loc}}(\mathbb{R}^3))\quad\tilde{\mathbb{P}}-a.s.
\end{align*}
\end{proposition}
\begin{proof}
Let define the function $\tilde{\Psi}_\varepsilon  =  \Delta^{-1}\mathrm{div}(\tilde{\varrho}_\varepsilon \tilde{\mathbf{u}}_\varepsilon )$ such that $\nabla \tilde{\Psi}_\varepsilon  =\mathcal{Q}(\tilde{\varrho}_\varepsilon \tilde{\mathbf{u}}_\varepsilon)$. Then equation \eqref{acousticSPD} becomes
\begin{equation}
\begin{aligned}
\label{acousticSPD1}
\varepsilon\mathrm{d}( \tilde{\varphi}_\varepsilon) +  \Delta \tilde{\Psi}_\varepsilon\,\mathrm{d}t   &=  0,   \\
\varepsilon \mathrm{d}\nabla\tilde{\Psi}_\varepsilon  +  \gamma \nabla \tilde{\varphi}_\varepsilon\,\mathrm{d}t   &=    \varepsilon \tilde{\textbf{F}}_\varepsilon\,\mathrm{d}t  + \varepsilon \mathcal{Q}\Phi(\tilde{\varrho}_\varepsilon, \tilde{\varrho}_\varepsilon \tilde{\mathbf{u}}_\varepsilon)\mathrm{d}\tilde{W}_\varepsilon.
\end{aligned}
\end{equation}
We however observe that Eq. \eqref{acousticSPD1} is equivalent to 
\begin{equation}
\begin{aligned}
\label{acousticSPD111}
\varepsilon\,\mathrm{d}
\begin{bmatrix}
       \tilde{\varphi}_\varepsilon        \\[0.3em]
       \nabla\tilde{\Psi}_\varepsilon
\end{bmatrix}  
=
\mathcal{A}
\begin{bmatrix}
       \tilde{\varphi}_\varepsilon        \\[0.3em]
       \nabla\tilde{\Psi}_\varepsilon
\end{bmatrix}  \mathrm{d}t 
+
\varepsilon\,
\begin{bmatrix}
       0        \\[0.3em]
       \tilde{\textbf{F}}_\varepsilon
\end{bmatrix}  \mathrm{d}t
+
\varepsilon\,
\begin{bmatrix}
       0        \\[0.3em]
      \mathcal{Q}\Phi
\end{bmatrix}  \mathrm{d}\tilde{W}_\varepsilon
\end{aligned}
\end{equation}
where the usual wave operator
\begin{align}
\mathcal{A}= 
\begin{bmatrix}
       0 & -\mathrm{div}       \\[0.3em]
       -\gamma\nabla & 0
\end{bmatrix}
\end{align}
is an infinitesimal generator of  a strongly continuous semigroup $S(\cdot)=\exp(\mathcal{A}\cdot)$. See for example \cite{desjardins1999low}. Also since $\Phi:=\Phi(\tilde{\varrho},\tilde{\varrho}\tilde{\mathbf{u}})$ is the Hilbert--Schmidt operator and equation \eqref{acousticSPD1} is satisfied weakly in the probabilistic sense, it follows that this weak solution is also a mild solution. See for example   \cite[Theorem 6.5]{da2014stochastic}. As such after rescaling, we obtain the mild equation
\begin{equation}
\begin{aligned}
\label{duhamel}
\begin{bmatrix}
       \tilde{\varphi}_\varepsilon        \\[0.3em]
       \nabla\tilde{\Psi}_\varepsilon
\end{bmatrix}  (t) 
&=
S\left(\frac{t}{\varepsilon}\right)
\begin{bmatrix}
       \tilde{\varphi}_\varepsilon(0)        \\[0.3em]
       \nabla\tilde{\Psi}_\varepsilon(0)
\end{bmatrix} 
+\int_0^t S\left(\frac{t-s}{\varepsilon}\right)
\begin{bmatrix}
       0        \\[0.3em]
\tilde{\textbf{F}}_\varepsilon
\end{bmatrix}
\mathrm{d}s
\\
&+\int_0^t S\left(\frac{t-s}{\varepsilon}\right)
\begin{bmatrix}
       0     \\[0.3em]
       \mathcal{Q}\tilde{\Phi}_\varepsilon
\end{bmatrix}
\mathrm{d}\tilde{W}_{s,\varepsilon}
\end{aligned}
\end{equation}
where the semigroup $S(t)$ is such that
\begin{equation}
\label{semigroup}
S\left(t\right)
\begin{bmatrix}
       \tilde{\varphi}_0        \\[0.3em]
       \nabla\tilde{\Psi}_0
\end{bmatrix} 
=
\begin{bmatrix}
       \tilde{\varphi}        \\[0.3em]
       \nabla\tilde{\Psi}
\end{bmatrix}  (t) 
\end{equation}
is the solution to the homogeneous problem
\begin{equation}
\begin{aligned}
\label{homogeAcoustic}
\mathrm{d}( \tilde{\varphi}) +  \Delta \tilde{\Psi}\,\mathrm{d}t   &=  0,   \\
 \mathrm{d}\nabla\tilde{\Psi}  +  \gamma \nabla \tilde{\varphi}\,\mathrm{d}t   &=    0,
 \\
\tilde{\varphi}(0) = \tilde{\varphi}_0; \quad \nabla\tilde{\Psi}(0) &= \nabla\tilde{\Psi}_0.
\end{aligned}
\end{equation}
Using Fourier transforms (in space), we obtain solution of Eq. \eqref{homogeAcoustic} which is given by the pair
\begin{equation}
\begin{aligned}
\label{solution}
\nabla\tilde{\Psi}(t,x)  &=  \frac{e^{i\sqrt{-\gamma\Delta}t}}{2}\left( \nabla\tilde{\Psi}_0(x)  -\frac{i\sqrt{\gamma}}{\sqrt{-\Delta}} \tilde{\varphi}_0(x) \right)    
\\
&+   \frac{e^{-i\sqrt{-\gamma\Delta}t}}{2}\left( \nabla\tilde{\Psi}_0(x)  +  \frac{i\sqrt{\gamma}}{\sqrt{-\Delta}} \tilde{\varphi}_0(x)\right),
\\
\tilde{\varphi}(t,x)  &=  \frac{e^{i\sqrt{-\gamma\Delta}t}}{2}\left( \frac{i\sqrt{-\Delta}}{\sqrt{\gamma}}\nabla\tilde{\Psi}_0(x)  + \tilde{\varphi}_0(x) \right)   
\\
&-   \frac{e^{-i\sqrt{-\gamma\Delta}t}}{2}\left(\frac{i\sqrt{-\Delta}}{\sqrt{\gamma}} \nabla\tilde{\Psi}_0(x)  -  \tilde{\varphi}_0(x)\right) .
\end{aligned}
\end{equation}

%
The lemma below is crucial to the proof of Proposition \ref{gradientPartOfMomentumConvergece} and  is an adaptation of \cite[Lemma 2.2]{smith2000global} to our setting. cf. \cite[Lemma 3.1]{feireisl2012multi}.
\begin{lemma}
\label{lem:expBound}
Let $\phi(x)\in C^\infty_c(\mathbb{R}^3)$, we have
\begin{align*}
 \int_{\mathbb{R}}\Vert  e^{i\sqrt{-\gamma \Delta}t}[\mathbf{v}\phi] \Vert^2_{L^2(\mathbb{R}^3)}\mathrm{d}t  \,\leq c(\phi)\,\Vert \mathbf{v} \Vert^2_{L^2(\mathbb{R}^3)}
\end{align*}
for any $\mathbf{v} \in L^2(\mathbb{R}^3)$.
\end{lemma}
\begin{proof}
For simplicity, we assume that $\gamma=1$. General $\gamma>1$ will then follow by rescaling $\delta$ below.

Using Plancherel’s theorem in $t$ and $x$, we have that
\begin{align*}
 \int_{\mathbb{R}}&\Vert  e^{i\sqrt{- \Delta}t}[\mathbf{v}\phi] \Vert^2_{L^2(\mathbb{R}^3)}\mathrm{d}t  =c(\pi)
 \int_{\mathbb{R}}\int_{\mathbb{R}^3}\Big\vert\int_{\mathbb{R}^3} \widehat{\phi}(\xi-\eta)\delta(\tau-{\vert\eta\vert})\widehat{\mathbf{v}}(\eta)\,\mathrm{d}\eta\Big\vert^2\mathrm{d}\xi\mathrm{d}\tau 
 \\
 &=c(\pi)
 \int_{\mathbb{R}}\int_{\mathbb{R}^3}\Big\vert\int_{\{\tau = \vert\eta\vert\}} \widehat{\phi}(\xi-\eta)\widehat{\mathbf{v}}(\eta)\,\mathrm{d}S_\eta\Big\vert^2\mathrm{d}\xi\mathrm{d}\tau 
 \\
 &\leq c(\pi)
 \int_{\mathbb{R}}\int_{\mathbb{R}^3}\Big(\int_{\{\tau = \vert\eta\vert\}}\vert \widehat{\phi}(\xi-\eta)\vert\,\mathrm{d}S_\eta\Big)
 \Big(\int_{\{\tau = \vert\eta\vert\}} \vert\widehat{\phi}(\xi-\eta)\vert\vert\widehat{\mathbf{v}}(\eta)\vert^2\,\mathrm{d}S_\eta\Big)\mathrm{d}\xi\mathrm{d}\tau 
 \\
  &\leq c(\pi,\phi)
 \int_{\mathbb{R}^3}\int_{\mathbb{R}}\int_{\{\tau = \vert\eta\vert\}} \vert\widehat{\phi}(\xi-\eta)\vert\vert\widehat{\mathbf{v}}(\eta)\vert^2\,\mathrm{d}S_\eta\mathrm{d}\tau\mathrm{d}\xi 
  \\
  &\leq c(\pi,\phi)
 \int_{\mathbb{R}^3}\int_{\mathbb{R}^3} \vert\widehat{\phi}(\xi-\eta)\vert\vert\widehat{\mathbf{v}}(\eta)\vert^2\,\mathrm{d}\eta\mathrm{d}\xi 
\leq c(\pi,\phi)\,\Vert \mathbf{v} \Vert^2_{L^2(\mathbb{R}^3)}
\end{align*}
where we have used the Cauchy-Schwartz inequality.
\end{proof}

Moving on, we now consider a smooth cut-off function (with expanding support) $\eta_r\in C^\infty_0(B_{2r})$ with $\eta_r\equiv 1$ in $B_r$ for $r>0$ and zero  elsewhere.
We now mollify the product of this cut-off function and our functions in \eqref{acousticSPD1} by means of spatial convolution with the standard mollifier. That is, if $v$ is one of the functions in \eqref{acousticSPD1}, we set 
$$v^\kappa =(\eta_r v)\ast \varphi^\kappa$$
where $\varphi^\kappa$ is the standard mollifier. This we do to ensure that the regularized functions are globally integrable.
First off, we note that since \eqref{newSummary}$_4$ holds uniformly in $\varepsilon$, for an arbitrary small $\delta>0$, we can find a  $\kappa(\delta)$  such that
\begin{align}
\label{mollification}
\tilde{\mathbb{E}}\sup_{t\in[0,T]}\Vert (\tilde{\varrho}_\varepsilon  \tilde{\mathbf{u}}_\varepsilon)^\kappa  -  \tilde{\varrho}_\varepsilon   \tilde{\mathbf{u}}_\varepsilon \Vert^p_{L^{\frac{2\gamma}{\gamma+1}}(B)}
\leq \delta
\end{align}
for any $1\leq p <\infty$ and an arbitrary ball $B\subset\subset B_r$ for $r>0$. 
Then using  \eqref{semigroup} , \eqref{solution} and Lemma \ref{lem:expBound}, we obtain
\begin{equation}
\begin{aligned}
\label{seminormEst}
\tilde{\mathbb{E}}\, \left\Vert  S(t)
\begin{bmatrix}
       \tilde{\varphi}^\kappa_0        \\[0.3em]
       \nabla\tilde{\Psi}^\kappa_0
\end{bmatrix}  \right\Vert^2 _{L^2(\mathbb{R}\times B)} 
\leq c_{h,\gamma} \, 
\tilde{\mathbb{E}}\,\left\Vert  
\begin{bmatrix}
       \tilde{\varphi}^\kappa_0        \\[0.3em]
       \nabla\tilde{\Psi}^\kappa_0
\end{bmatrix}   \right\Vert^2_{L^2(\mathbb{R}^3)},
\end{aligned}
\end{equation}
for any ball $B\subset\mathbb{R}^3$ and where in particular, the constant is independent of $\kappa$. So by rescaling in time, i.e, setting $s=\frac{t}{\varepsilon}$ so that $\mathrm{d}s=\frac{\mathrm{d}t}{\varepsilon}$, we get
\begin{equation}
\begin{aligned}
\label{seminormEst1}
\tilde{\mathbb{E}}\,\left\Vert  S\left(\frac{t}{\varepsilon}\right)
\begin{bmatrix}
       \tilde{\varphi}^\kappa_\varepsilon(0)        \\[0.3em]
       \nabla\tilde{\Psi}^\kappa_\varepsilon(0)
\end{bmatrix}  \right\Vert^2_{L^2((0,T)\times B)}  
\,&\leq 
\tilde{\mathbb{E}}\, \left\Vert  S\left(\frac{t}{\varepsilon}\right)
\begin{bmatrix}
       \tilde{\varphi}^\kappa_\varepsilon(0)        \\[0.3em]
       \nabla\tilde{\Psi}^\kappa_\varepsilon(0)
\end{bmatrix}   \right\Vert^2_{L^2(\mathbb{R}\times B)} 
\\
&\lesssim \,\varepsilon\, \tilde{\mathbb{E}}\,
\left\Vert  
\begin{bmatrix}
        \tilde{\varphi}^\kappa_\varepsilon(0)        \\[0.3em]
       \nabla\tilde{\Psi}^\kappa_\varepsilon(0)
\end{bmatrix}   \right\Vert^2_{L^2(\mathbb{R}^3)} 
\end{aligned}
\end{equation}
with a constant that is independent of $\varepsilon$. Now by the continuity of $\mathcal{Q}$,  \eqref{mollification}, and the initial law defined in the statement of Theorem \ref{thm:dissi},
 we conclude that
\begin{equation}
\begin{aligned}
\label{firstEst}
\tilde{\mathbb{E}}\Bigg\Vert  S\left(\frac{t}{\varepsilon}\right)
\begin{bmatrix}
       \tilde{\varphi}^\kappa_\varepsilon(0)        \\[0.3em]
       \nabla\tilde{\Psi}^\kappa_\varepsilon(0)
\end{bmatrix}  &\Bigg\Vert^2_{L^2((0,T)\times B)}  
\lesssim \,\varepsilon\,\tilde{\mathbb{E}}\Bigg( \Vert \tilde{\varphi}^\kappa_\varepsilon(0) \Vert^2_{L^{\min\{2,\gamma\}}(\mathbb{R}^3)}   
\\
&+  \Vert \tilde{\mathbf{q}}^\kappa_\varepsilon(0) \Vert^2_{L^{\frac{2\gamma}{\gamma+1}}(\mathbb{R}^3)}\Bigg)
\leq\,\varepsilon\, c_{\kappa, M}.
\end{aligned}
\end{equation}

Similarly we have that for any ball $B\subset \mathbb{R}^3$,
\begin{equation}
\begin{aligned}
\label{secondEst}
\tilde{\mathbb{E}}\,  \Bigg\Vert \int\limits_0^t  S\left(\frac{t-s}{\varepsilon}\right)
       \tilde{\textbf{F}}^\kappa_\varepsilon
\mathrm{d}s\,   &\Bigg\Vert^2_{L^2((0,T)\times B)}
\, \leq \, 
\tilde{\mathbb{E}}\,\left\Vert   S\left(\frac{t-s}{\varepsilon}\right)
       \tilde{\textbf{F}}^\kappa_\varepsilon
   \right\Vert^2 _{L^2((0,t)\times(0,T)\times B)}  
\\
&\leq \,
\tilde{\mathbb{E}}\, \left\Vert S\left(\frac{t}{\varepsilon}\right) S\left(\frac{-s}{\varepsilon}\right)
       \tilde{\textbf{F}}^\kappa_\varepsilon
  \right\Vert^2_{L^2(\mathbb{R}\times(0,T)\times B)}  
\\
&\leq \varepsilon\,c_{\gamma} \, \tilde{\mathbb{E}}\,
\left\Vert 
S\left(\frac{-s}{\varepsilon}\right)
       \tilde{\textbf{F}}^\kappa_\varepsilon
  \right\Vert^2_{L^2((0,T)\times B)} 
\\
&\eqsim \,\varepsilon  \, \tilde{\mathbb{E}}\,
\left\Vert 
       \tilde{\textbf{F}}^\kappa_\varepsilon
  \right\Vert^2_{L^2((0,T)\times \mathbb{R}^3)}
\leq \varepsilon \, c_{\gamma,\kappa} 
\end{aligned}
\end{equation} 
Where we have used Jensen's inequality and Fubini's theorem in the first inequality, extended $(0,t)$ to $\mathbb{R}$ and used the semigroup property in the second inequality, applied similar reasoning as in \eqref{seminormEst1} in the third inequality and then used that $\left(S(t)\right)_t$ is a group of isometries on $L^2$ (extended by zero outside of the ball) in the last line above.

We have therefore obtained the following bounds
\begin{equation}
\begin{aligned}
\label{combinedFirstandSecond}
\tilde{\mathbb{E}}\,\left\Vert  S\left(\frac{t}{\varepsilon}\right)
\begin{bmatrix}
       \tilde{\varphi}^\kappa_\varepsilon(0)        \\[0.3em]
       \nabla\tilde{\Psi}^\kappa_\varepsilon(0)
\end{bmatrix}  \right\Vert^2_{L^2(0,T;L^2(B))}  
&\lesssim \,\varepsilon, \quad 
\\
\tilde{\mathbb{E}}\, \left\Vert \int\limits_0^t  S\left(\frac{t-s}{\varepsilon}\right)
\begin{bmatrix}
       0       \\[0.3em]
       \tilde{\textbf{F}}^\kappa_\varepsilon
\end{bmatrix}\mathrm{d}s\,   \right\Vert^2_{L^2(0,T;L^2(B))}  &\lesssim \,\varepsilon
\end{aligned}
\end{equation}
for any ball $B\subset\mathbb{R}^3$. Now let make the notation $\tilde{\Phi}_\varepsilon^\kappa(e_i):  = g_i\left(\cdot, \tilde{\varrho}_\varepsilon(\cdot), (\tilde{\mathbf{q}}_\varepsilon )(\cdot) \right)^\kappa=: \tilde{g}_i^{\varepsilon,\kappa}$. We  notice that for a continuous function $S(t)$ and a continuous operator $\mathcal{Q}$, the quantity $S(t)\mathcal{Q}\Phi$ is Hilbert--Schmidt if $\Phi$ is Hilbert--Schmidt. As such, it follows from It\'{o} isometry that
\begin{align*}
\tilde{\mathbb{E}}\,\Bigg\Vert
\int\limits_0^t 
 &S\left(\frac{t-s}{\varepsilon}\right)
\begin{bmatrix}
       0       \\[0.3em]
       \mathcal{Q}\tilde{\Phi}^\kappa_\varepsilon
\end{bmatrix} \,\mathrm{d}\tilde{W}_{\varepsilon}(s) \Bigg\Vert^2_{L^2((0,T)\times B)} 
\\
&=
\tilde{\mathbb{E}}\,\int\limits_0^t \left\Vert 
 S\left(\frac{t-s}{\varepsilon}\right)
       \mathcal{Q}\tilde{\Phi}^\kappa_\varepsilon
       \right\Vert^2_{L_2(\mathfrak{U};L^2((0,T)\times B))}\mathrm{d}s
 \\
 &=
\tilde{\mathbb{E}}\,\int\limits_0^t \sum_{i\in\mathbb{N}} \left\Vert 
 S\left(\frac{t-s}{\varepsilon}\right)
       \mathcal{Q}\tilde{g}_i^{\varepsilon,\kappa}
       \right\Vert^2_{L^2((0,T)\times B)}\mathrm{d}s
\\
&\lesssim
\int\limits_0^T
\sum\limits_{i\in\mathbb{N}}  \int_{\mathbb{R}}\tilde{\mathbb{E}}
\left\Vert 
 S\left(\frac{t-s}{\varepsilon}\right)
       \mathcal{Q}\tilde{g}_i^{\varepsilon,\kappa}
          \right\Vert^2_{L^2(B)}
\mathrm{d}s\,\mathrm{d}t
\end{align*} 
where the above involved extending $s$ from $(0,t)$ to $\mathbb{R}$ as well as Fubini's theorem. 

Now using the semigroup property and similar estimate as in equation \eqref{seminormEst} and \eqref{seminormEst1}, followed by the fact that the semigroup is an isometry with respect to the $L^2$-norm, we get that

\begin{align*}
\int\limits_0^T
\sum\limits_{i\in\mathbb{N}}   \int_{\mathbb{R}}\tilde{\mathbb{E}}
&\Big\Vert 
 S\left(\frac{t-s}{\varepsilon}\right)
       \mathcal{Q}\tilde{g}_i^{\varepsilon,\kappa}
          \Big\Vert^2_{L^2(B)}
\mathrm{d}s\mathrm{d}t
\\
&=
\int\limits_0^T
\sum\limits_{i\in\mathbb{N}}  \tilde{\mathbb{E}}
\left\Vert 
 S\left(\frac{t}{\varepsilon}\right)S\left(\frac{-s}{\varepsilon}\right)
       \mathcal{Q}\tilde{g}_i^{\varepsilon,\kappa}
          \right\Vert^2_{L^2(\mathbb{R}\times B)}
\mathrm{d}t
 \\
&\lesssim\varepsilon\,
\int\limits_0^T
\sum\limits_{i\in\mathbb{N}}   \tilde{\mathbb{E}}
\left\Vert 
S\left(\frac{-s}{\varepsilon}\right)
       \mathcal{Q}\tilde{g}_i^{\varepsilon,\kappa}
          \right\Vert^2_{L^2(B)}
\mathrm{d}t
\\
&\eqsim\varepsilon\,
\int\limits_0^T
\sum\limits_{i\in\mathbb{N}}   \tilde{\mathbb{E}}
\left\Vert 
       \mathcal{Q}\tilde{g}_i^{\varepsilon,\kappa}
          \right\Vert^2_{L^2(\mathbb{R}^3)}
\mathrm{d}t
\lesssim\varepsilon\,
\int\limits_0^T
\sum\limits_{i\in\mathbb{N}}   \tilde{\mathbb{E}}
\left\Vert 
     \tilde{g}_i^{\varepsilon,\kappa}
          \right\Vert^2_{L^2(\mathbb{R}^3)}
\mathrm{d}t
\\
&\lesssim\varepsilon\,
\tilde{\mathbb{E}} \int\limits_0^T
 \sum\limits_{i\in\mathbb{N}}   
\left\Vert 
      \tilde{g}_i^{\varepsilon}
          \right\Vert^2_{L^1(\mathbb{R}^3)}
\mathrm{d}t  
\lesssim \varepsilon.
\end{align*}
The last inequality follows because the noise term is assumed to be compactly supported in $\mathbb{R}^3$. See \eqref{noiseSupport}. We have therefore shown that
\begin{align*}
\tilde{\mathbb{E}}\,\Bigg\Vert
\int\limits_0^t 
 S\left(\frac{t-s}{\varepsilon}\right)
       \mathcal{Q}\tilde{\Phi}^\kappa_\varepsilon
&\mathrm{d}\tilde{W}_{\varepsilon}(s) \Bigg\Vert^2_{L^2((0,T)\times B)} \leq \varepsilon\, c_{h,\gamma, \kappa} 
\end{align*}
where the constant is independent of $\varepsilon$. Combining this with the estimates from \eqref{combinedFirstandSecond}, we get from \eqref{duhamel} that
\begin{align*}
\tilde{\mathbb{E}}\, \left\Vert     
\begin{bmatrix}
       \tilde{\varphi}_\varepsilon(t)      \\[0.3em]
       \nabla\tilde{\Psi}_\varepsilon(t)
\end{bmatrix} \right\Vert^2_{L^2((0,T)\times B)}
&= \tilde{\mathbb{E}}\,\Vert \tilde{\varphi}_\varepsilon(t) \Vert^2_{L^2((0,T)\times B)}
+ \tilde{\mathbb{E}}\,\Vert \nabla\tilde{\Psi}_\varepsilon(t) \Vert^2_{L^2((0,T)\times B)}
\\
&\lesssim I_1  + I_2  +  I_3 \,\leq \, \varepsilon \, c_{h,\gamma, \kappa}.
\end{align*}
where we have set
\begin{align*}
I_1 &:=  \tilde{\mathbb{E}}  \, \left\Vert  S\left(\frac{t}{\varepsilon}\right)
\begin{bmatrix}
       \tilde{\varphi}_\varepsilon(0)        \\[0.3em]
       \nabla\tilde{\Psi}_\varepsilon(0)
\end{bmatrix}   \right\Vert^2_{L^2((0,T)\times B)}  
\, \,
I_2 :=  \tilde{\mathbb{E}}  \,\left\Vert \int\limits_0^t  S\left(\frac{t-s}{\varepsilon}\right)
       \tilde{\textbf{F}}_\varepsilon
\mathrm{d}s\,   \right\Vert^2_{L^2((0,T)\times B)}
\\
I_3 &:=  \tilde{\mathbb{E}} \,
 \left\Vert \int\limits_0^t  S\left(\frac{t-s}{\varepsilon}\right)
       \mathcal{Q}\Phi
\mathrm{d}W_{s,\varepsilon}  \right\Vert^2_{L^2((0,T)\times B)}
\end{align*}
So in particular,
\begin{align}
\label{expExtimate1}
\tilde{\mathbb{E}}\,\Vert \nabla\tilde{\Psi}^\kappa_\varepsilon(t) \Vert^2_{L^2((0,T)\times B)}
&\leq  \varepsilon \, c_{h,\gamma, \kappa}
\end{align}
holds for any ball $B\subset \mathbb{R}^3$. We also deduce from Eq. \eqref{mollification} together with the embedding $L^\infty(0,T; L^r(B))\hookrightarrow L^2(0,T; L^r(B))$ where $r= \frac{2\gamma}{\gamma+1}$, and the continuity of $\mathcal{Q}$ that 
\begin{equation}
\begin{aligned}
\label{expExtimate2}
\tilde{\mathbb{E}}\Vert\nabla\tilde{\Psi}_\varepsilon^\kappa -  \nabla\tilde{\Psi}_\varepsilon\Vert^2_{L^2(0,T;L^r(B))}
&\leq c_{\delta,t}\, ,\quad
\tilde{\mathbb{E}}\Vert\tilde{\mathbf{q}}_\varepsilon^\kappa -  \tilde{\mathbf{q}}_\varepsilon\Vert^2_{L^2(0,T; L^r(B))}
\leq  c_{\delta,t}
\end{aligned}
\end{equation}
where $\delta$ is the arbitrarily constant from \eqref{mollification} which is independent of $\kappa$ and $\varepsilon$. As such, the constant $c_{\delta,t}$ can be made arbitrarily small for an arbitrary choice of $\delta$ so that
\begin{align*}
\lim\limits_{\kappa\downarrow0}\tilde{\mathbb{E}}\Vert\nabla\tilde{\Psi}_\varepsilon^\kappa -  \nabla\tilde{\Psi}_\varepsilon\Vert^2_{L^2(0,T; L^r(B))} = 0, \quad r=\frac{2\gamma}{\gamma+1}.
\end{align*}
Thus,  it follows from \eqref{expExtimate1} and the uniform bound \eqref{expExtimate2} that we may exchange the order of taking limits in \eqref{expExtimate2}. As such for any ball $B\subset\mathbb{R}^3$, we have that
\begin{equation}
\begin{aligned}
\label{something}
0 &\leq \lim_{\varepsilon\downarrow 0}\tilde{\mathbb{E}} \Vert \nabla\tilde{\Psi}_\varepsilon \Vert^2_{L^2(0,T; L^r(B))} 
= \lim_{\kappa\downarrow 0}\lim_{\varepsilon\downarrow 0} \tilde{\mathbb{E}}\Vert \nabla\tilde{\Psi}_\varepsilon \Vert^2_{L^2(0,T; L^r(B))}
\\
&\leq 2\lim_{\varepsilon\downarrow 0}\lim_{\kappa\downarrow 0} \tilde{\mathbb{E}}\Vert \nabla\tilde{\Psi}_\varepsilon^\kappa -  \nabla\tilde{\Psi}_\varepsilon\Vert^2_{L^2(0,T; L^r(B))}
+ 2\lim_{\kappa\downarrow 0}\lim_{\varepsilon\downarrow 0} \tilde{\mathbb{E}}\Vert \nabla\tilde{\Psi}_\varepsilon^\kappa \Vert^2_{L^2(0,T; L^r(B))}
\\
&\leq c\, \left(\lim_{\kappa\downarrow 0} \tilde{\mathbb{E}}\Vert \nabla\tilde{\Psi}_\varepsilon^\kappa -  \nabla\tilde{\Psi}_\varepsilon\Vert^2_{L^2(0,T; L^r(B))}
+ \lim_{\varepsilon\downarrow 0} \tilde{\mathbb{E}}\Vert \nabla\tilde{\Psi}_\varepsilon^\kappa \Vert^2_{L^2((0,T)\times B)} \right)
\,= \, 0
\end{aligned}
\end{equation}
hence our claim.
\end{proof}

\begin{remark}
We observe that by combining \eqref{solenoidalPartOfMomentumConvergece} and Proposition \ref{gradientPartOfMomentumConvergece}, we can only conclude that
\begin{align}
\label{strongMomemConv}
\tilde{\varrho}_\varepsilon\tilde{\mathbf{u}}_\varepsilon \rightarrow \tilde{\mathbf{u}} \quad \text{in}\quad L^2(0,T;W_{\mathrm{loc}}^{-1,2}(\mathbb{R}^3))
\end{align}
$\tilde{\mathbb{P}}-$a.s. 
\end{remark}
However, we can improve this spatial regularity.  We give this as part of the lemma below.

\begin{lemma}
\label{momentumStrong}
Let $\gamma>\frac{3}{2}$, $q<6$  and $l>\frac{3}{2}$. Then for all $r\in(\frac{3}{2},6)$, we have that
\begin{align}
\mathrm{div}(\tilde{\varrho}_\varepsilon\tilde{\mathbf{u}}_\varepsilon\otimes  \tilde{\mathbf{u}}_\varepsilon)   \rightharpoonup \mathrm{div}(\tilde{\mathbf{u}}\otimes\tilde{\mathbf{u}})\quad &\text{in}\quad L^1(0,T; W_{\mathrm{div}}^{-l,2}(B)) \label{convectiveConve},
\\
\tilde{\varrho}_\varepsilon\tilde{\mathbf{u}}_\varepsilon \rightarrow \tilde{\mathbf{u}} \quad &\text{in}\quad L^2(0,T;L^r(B))
\label{strongMomemConv1}
\end{align}
$\tilde{\mathbb{P}}-$a.s. for any ball $B\subset \mathbb{R}^3$.
\end{lemma}
\begin{proof}
To avoid repetition, we refer the reader to \cite[Proposition 3.13]{breit2015incompressible} for the proof of \eqref{convectiveConve}. However we proof \eqref{strongMomemConv1} below.

By using the identity $\mathcal{P}(\tilde{\varrho}_\varepsilon \tilde{\mathbf{u}}_\varepsilon) = \mathcal{P}(\tilde{\varrho}_\varepsilon -1)\tilde{\mathbf{u}}_\varepsilon  + \mathcal{P}\tilde{\mathbf{u}}_\varepsilon$, the reverse triangle inequality and then the triangle inequality, we have that for any ball $B\subset\mathbb{R}^3$,
\begin{align*}
&\Big\vert \Vert \mathcal{P}(\tilde{\varrho}_\varepsilon\tilde{\mathbf{u}}_\varepsilon) \Vert_{L^2(0,T;L^r(B))}  -\left\Vert \tilde{\mathbf{u}} \right\Vert_{L^2(0,T;L^r(B))}  \Big\vert
\\
&\leq 
\left\Vert \mathcal{P}(\tilde{\varrho}_\varepsilon -1)\tilde{\mathbf{u}}_\varepsilon  + \mathcal{P}\tilde{\mathbf{u}}_\varepsilon - \tilde{\mathbf{u}} \right\Vert_{L^2(0,T;L^r(B))} 
\\
&\leq
\left\Vert \mathcal{P}(\tilde{\varrho}_\varepsilon -1)\tilde{\mathbf{u}}_\varepsilon \right\Vert_{L^2(0,T;L^r(B))} + \left\Vert \mathcal{P}\tilde{\mathbf{u}}_\varepsilon - \tilde{\mathbf{u}} \right\Vert_{L^2(0,T;L^r(B))} 
\\
&\leq c
\left\{
\left\Vert \tilde{\varrho}_\varepsilon -1 \right\Vert_{L^\infty(0,T; L^{\min\{2,\gamma\}}(\mathbb{R}^3))}\left\Vert \tilde{\mathbf{u}}_\varepsilon \right\Vert_{L^2(0,T ; L^\frac{r\gamma}{\gamma-r}(B))} + \left\Vert \mathcal{P}\tilde{\mathbf{u}}_\varepsilon - \tilde{\mathbf{u}} \right\Vert_{L^2(0,T ; L^q(B))} \right\}
\\
&\rightarrow 0
\end{align*}
where we have used \eqref{newSummary}$_3$, \eqref{strongDens}, \eqref{strongSolenoidalvelocity} and the continuity of $\mathcal{P}$.

Combining this with Proposition \ref{gradientPartOfMomentumConvergece} finishes the proof.
\end{proof}
By combining \eqref{strongDens} 
with Lemma \ref{momentumStrong} we finish the proof of Lemma \ref{lem:M}.
\end{proof}

The following lemma now completes the proof of Proposition \ref{prop:limitMarttingale}.
\begin{lemma}
\label{lem:N}
For all $t\in [0,T]$ and $\phi \in C^\infty_c(\mathbb{R}^3)$, we define
\begin{align*}
N(\varrho,   \mathbf{q})_{t}    =  \sum_{k\in\mathbb{N}}\int_0^t  \langle g_k(\varrho, \mathbf{q}), \phi \rangle^2\mathrm{d}s
,\quad
N_k(\varrho,   \mathbf{q})_{t}      =  \int_0^t  \langle g_k(\varrho, \mathbf{q}), \phi \rangle\mathrm{d}s.
\end{align*}
Then we have that for $\varepsilon\in(0,1)$
\begin{align*}
N(\tilde{\varrho}_\varepsilon,   \tilde{\varrho}_\varepsilon\tilde{\mathbf{u}}_\varepsilon)_{t}  \,\rightarrow \,   N(1,    \tilde{\mathbf{u}})_{t}  \quad \tilde{\mathbb{P}}-a.s.,
\\
N_k(\tilde{\varrho}_\varepsilon,   \tilde{\varrho}_\varepsilon\tilde{\mathbf{u}}_\varepsilon)_{t}  \rightarrow    N_k(1,    \tilde{\mathbf{u}})_{t}  \quad \tilde{\mathbb{P}}-a.s.
\end{align*}
as $\varepsilon\rightarrow 0$.
\end{lemma}
\begin{proof}
By Minkowski's inequality, we have that
\begin{align*}
&\Vert \langle \Phi(\tilde{\varrho}_\varepsilon, \tilde{\varrho}_\varepsilon\tilde{\mathbf{u}}_\varepsilon)\cdot  , \phi\rangle  -  \langle \Phi(1,\tilde{\mathbf{u}})\cdot,\phi \rangle \Vert_{L_2(\mathfrak{U};\mathbb{R})} 
\\
&= \left(\sum_{k\in\mathbb{N}}\left\vert \langle \left(\Phi(\tilde{\varrho}_\varepsilon, \tilde{\varrho}_\varepsilon\tilde{\mathbf{u}}_\varepsilon)-   \Phi(1,\tilde{\mathbf{u}})\right) (e_k)\, ,\,\phi \rangle   \right\vert^2\right)^\frac{1}{2}
\\
&\leq c(\phi)\, \left(\sum_{k\in\mathbb{N}}\left\vert \int_{\mathrm{supp}(\phi)}  ( g_k(\tilde{\varrho}_\varepsilon,\tilde{\varrho}_\varepsilon\tilde{\mathbf{u}}_\varepsilon)  - g_k(1,\tilde{\mathbf{u}}))\,\mathrm{d}x   \right\vert^2 \right)^\frac{1}{2}
\\
&\leq  c\, \int_{\mathrm{supp}(\phi)}\left( \sum_{k\in\mathbb{N}}\left\vert  g_k(\tilde{\varrho}_\varepsilon,\tilde{\varrho}_\varepsilon\tilde{\mathbf{u}}_\varepsilon)  -g_k(1,\tilde{\mathbf{u}})   \right\vert^2 \right)^\frac{1}{2}\,\mathrm{d}x
\end{align*}
where $\int_{\mathrm{supp}(\phi)}f\,\mathrm{d}x$ is the restriction of the integral of $f$ to the support of $\phi$.

Now let  $\mathbf{x}:=(\tilde{\varrho}_\varepsilon,\tilde{\varrho}_\varepsilon\tilde{\mathbf{u}}_\varepsilon)$ and $\mathbf{y}:=(1,\tilde{\mathbf{u}})$ be vectors in $\mathbb{R}^{4}$  and define the line segment joining them by
\begin{align*}
L(\mathbf{x},\mathbf{y})  =\{t \mathbf{x}  +(1+t)\mathbf{y}   \,:\, 0\leq t\leq 1  \}.
\end{align*}
Then by the Mean value inequality, we can find $(\underline{\varrho}_\varepsilon , \underline{\mathbf{q}}_\varepsilon) \in L(\mathbf{x},\mathbf{y})$ such that
\begin{align*}
& \int_{\mathrm{supp}(\phi)}\left( \sum_{k\in\mathbb{N}}\left\vert  g_k(\tilde{\varrho}_\varepsilon,\tilde{\varrho}_\varepsilon\tilde{\mathbf{u}}_\varepsilon)  -g_k(1,\tilde{\mathbf{u}})   \right\vert^2 \right)^\frac{1}{2}\,\mathrm{d}x
\\
&\leq 
\int_{\mathrm{supp}(\phi)}\left( \left\vert (\tilde{\varrho}_\varepsilon,\tilde{\varrho}_\varepsilon\tilde{\mathbf{u}}_\varepsilon)-(1,\tilde{\mathbf{u}})\right\vert^2 \sum_{k\in\mathbb{N}}\left\vert  \nabla_{\underline{\varrho}_\varepsilon , \underline{\mathbf{q}}_\varepsilon}g_k(\underline{\varrho_\varepsilon} , \underline{\mathbf{q}_\varepsilon})  \right\vert^2 \right)^\frac{1}{2}\,\mathrm{d}x
\\
&\leq c\,
\left(
 \int_{\mathrm{supp}(\phi)} \left\vert \tilde{\varrho}_\varepsilon - 1\right\vert\,\mathrm{d}x
 +
 \int_{\mathrm{supp}(\phi)}
  \left\vert \tilde{\varrho}_\varepsilon \tilde{\mathbf{u}}_\varepsilon - \tilde{\mathbf{u}}\right\vert 
\,\mathrm{d}x \right)
\\
&=: I_1  + I_2
\end{align*}
where we have used \eqref{stochCoeffBound} and \cite[Eq. 6.13.6]{hardy1952inequalities} in the last inequality.

Hence by using the embeddings $L^{\min\{2,\gamma\}}\hookrightarrow L^1$ and $L^r\hookrightarrow L^1$, which holds true for any compact set or ball in $\mathbb{R}^3$ and where $r$ is as defined in Lemma \ref{momentumStrong}, we get that $I_1\rightarrow0$ and $I_2\rightarrow0$ for a.e. $(\omega,t)$ in $\tilde{\Omega}\times(0,T)$. This is due to \eqref{strongDens} and  \eqref{strongMomemConv1}. Hence 
\begin{align*}
 \langle \Phi(\tilde{\varrho}_\varepsilon, \tilde{\varrho}_\varepsilon\tilde{\mathbf{u}}_\varepsilon)\cdot, \phi\rangle  \, \rightarrow\,  \langle \Phi(1,\tilde{\mathbf{u}})\cdot,\phi \rangle  \quad\text{in}\quad L_2(\mathfrak{U};\mathbb{R})\quad \tilde{\mathbb{P}}\times \mathcal{L}-a.e.
\end{align*}
which implies that
\begin{align*}
N(\tilde{\varrho}_\varepsilon,   \tilde{\varrho}_\varepsilon\tilde{\mathbf{u}}_\varepsilon)_{t} \, \rightarrow\,  N(1,    \tilde{\mathbf{u}})_{t}  \quad\text{in}\quad L_2(\mathfrak{U};\mathbb{R})\quad \tilde{\mathbb{P}}\times \mathcal{L}-a.e.
\end{align*}
Similar argument holds for $N_k(\tilde{\varrho}_\varepsilon,   \tilde{\varrho}_\varepsilon\tilde{\mathbf{u}}_\varepsilon)_{t}  \rightarrow    N_k(1,    \tilde{\mathbf{u}})_{t}  \quad \tilde{\mathbb{P}}-a.s.$
\end{proof}

Using Lemmata \ref{lem:M} and Lemma \ref{lem:N},  we can now pass to the limit in equation \cite[Eq. 3.14-3.16]{breit2015incompressible} to get that :
\begin{equation}
\begin{aligned}
\label{cont1}
\tilde{\mathbb{E}}\, h(  \textbf{r}_s\tilde{\mathbf{u}},  \textbf{r}_s\tilde{W})\left[ M(1,  \tilde{\mathbf{u}},  \tilde{\mathbf{u}})_{s,t}  \right]
=0,
\\
\tilde{\mathbb{E}}\, h(  \textbf{r}_s\tilde{\mathbf{u}},  \textbf{r}_s\tilde{W})\left[\left[  M(1,  \tilde{\mathbf{u}},  \tilde{\mathbf{u}})^2\right]_{s,t}   -  N(1,  \tilde{\mathbf{u}})_{s,t} \right]
=0,
\\
\tilde{\mathbb{E}}\, h(  \textbf{r}_s\tilde{\mathbf{u}},  \textbf{r}_s\tilde{W})\left[\left[  M(1,  \tilde{\mathbf{u}},  \tilde{\mathbf{u}})\tilde{\beta}_k\right]_{s,t}   - N(1,  \tilde{\mathbf{u}})_{s,t} \right]
=0.
\end{aligned}
\end{equation}
Equation \eqref{cont1} means that $M(1,  \tilde{\mathbf{u}},  \tilde{\mathbf{u}})_t$ is an $(\mathscr{F}_t)-$martingale. Moreover, using \eqref{cont1}$_2$, we get the quadratic and cross-variation of $M(1,  \tilde{\mathbf{u}},  \tilde{\mathbf{u}})_t$ as
\begin{align*}
\big\langle\big\langle  M(1,  \tilde{\mathbf{u}},  \tilde{\mathbf{u}})_t   \big\rangle\big\rangle= N(1, \tilde{\mathbf{u}}),
\\
\big\langle\big\langle M(1,  \tilde{\mathbf{u}},  \tilde{\mathbf{u}})_t , \tilde{\beta}_k  \big\rangle\big\rangle= N_k(1, \tilde{\mathbf{u}})
\end{align*}
which yields
\begin{align*}
\Big\langle\Big\langle M(1,  \tilde{\mathbf{u}},  \tilde{\mathbf{u}})_t  -   \int_0^t\langle \Phi(1, \tilde{\mathbf{u}})\,\mathrm{d}\tilde{W}, \phi  \rangle \Big\rangle\Big\rangle = 0.
\end{align*}
That is, for $\phi\in C^\infty_{c,\mathrm{div}}(\mathbb{R}^3)$ and $t\in [0,T]$, we have that
\begin{align*}
\langle  \tilde{\mathbf{u}}(t), \phi \rangle   =  \langle \tilde{\mathbf{u}}(0), \phi \rangle  +  \int_0^t  \langle \tilde{\mathbf{u}}\otimes \tilde{\mathbf{u}}, \nabla\phi \rangle\mathrm{d}s  -\nu \int_0^t  \langle \nabla \tilde{\mathbf{u}}, \nabla\phi \rangle\mathrm{d}s      +   \int_0^t\langle \Phi(1, \tilde{\mathbf{u}})\,\mathrm{d}\tilde{W}, \phi  \rangle 
\end{align*}
$\tilde{\mathbb{P}}-$a.s. keeping in mind that $\mathrm{div}\phi=0$. 

\end{proof}

\bibliographystyle{acm}

\end{document}